\setlist[enumerate]{label = (\roman*)}
\newcommand{\U}{\mathrm{U}}
\newcommand{\SU}{\mathrm{SU}}
\newcommand{\SO}{\mathrm{SO}}
\newcommand{\RR}{\mathbb{R}}
\newcommand{\CC}{\mathbb{C}}
\newcommand{\ZZ}{\mathbb{Z}}
\newcommand{\HH}{\mathbb{H}}
\newtheorem{thm}{Theorem}[section]
\newtheorem{prop}[thm]{Proposition}
\newtheorem{cor}[thm]{Corollary}
\newtheorem{lem}[thm]{Lemma}
\theoremstyle{definition}
\newtheorem{rem}[thm]{Remark}
\newtheorem{defn}[thm]{Definition}
\begin{document}

\title{On torus equivariant $S^4$-bundles over $S^4$\\ and Petrie-type questions for GKM manifolds}
\author{Oliver Goertsches\footnote{Philipps-Universit\"at Marburg, email:
goertsch@mathematik.uni-marburg.de}, Panagiotis Konstantis\footnote{Philipps-Universit\"at Marburg, 
email: pako@mathematik.uni-marburg.de}, and Leopold
Zoller \footnote{Universitat de Barcelona, email: leopold.zoller@ub.edu}}

\maketitle

\begin{abstract} 
We classify $T^2$-GKM fibrations in which both fiber and base are the GKM graph of $S^4$, with standard weights in the base. For each case in which the total space is orientable, we construct, by explicit clutching, a realization as a $T^2$-equivariant linear $S^4$-bundle over $S^4$. We determine which of the total spaces of these examples are non-equivariantly homotopy equivalent, homeomorphic or diffeomorphic, thereby finding many examples of a) pairs of homotopy equivalent, non-homeomorphic GKM manifolds with different first Pontryagin class, and b) pairs of GKM actions on the same smooth manifold whose GKM graphs do not agree as unlabeled graphs.
\end{abstract}
\section{Introduction}

The Petrie conjecture \cite{Petrie} states that if a manifold $M$ is homotopy equivalent to $\CC P^n$ and admits a nontrivial circle action, then any homotopy equivalence between $M$ and $\CC P^n$ preserves their Pontryagin classes. Variants of this conjecture were proposed for instance for toric manifolds, see \cite[Problem 5]{MasudaSuh}. A related problem of the same flavour is the cohomological rigidity problem for toric manifolds: it asks whether two toric manifolds with isomorphic (non-equivariant) cohomology rings are necessarily homeomorphic \cite[Problem 1]{MasudaSuh}.

We wish to study the situation through the lens of GKM theory, which is a natural generalization of the toric setting.
Let $T$ be a compact torus. A GKM manifold is a closed orientable manifold with a certain type of $T$-action to which one can associate a labelled graph, the so-called GKM graph. While the GKM definition is generally a lot more flexible than that of a toric manifold, there are strong parallels, with the GKM graph corresponding to the $1$-skeleton of the moment polytope. E.g., under suitable conditions, the GKM graph encodes the (equivariant) cohomology ring \cite{MR375357, MR2308029} as well as the (equivariant) characteristic classes, see \cite{goertsches2022lowdimensional, SWclasses}.

The motivation of the present paper is to provide families of examples of GKM manifolds which display interesting behaviour with regards to their (non-equivariant) homotopy and homeomorphism types, thus providing negative answers to the GKM versions of the above Pertie-type questions. This is achieved by studying the situation of $T$-equivariant $S^4$-bundles over the standard $T^2$-action on $S^4$, which we find to be of independent GKM theoretic interest.

As a first step  we study possible GKM graphs of these bundles from a combinatorial perspective. It was shown in \cite[Proposition 3.7]{MR4634089} that smooth $T$-equivariant fiber bundles $\pi:M\to B$ in which both $M$ and $B$ are GKM $T$-manifolds, give rise to a GKM fibration \cite{GuilleminSabatiniZara} on the level of GKM graphs. Restricting to the case of $T$-equivariant linear $S^4$-bundles $M^8 \to S^4$ we obtain GKM fibrations over biangles with fiber a biangle. We classify in Section \ref{sec:combinatorial} all GKM fibrations in which both fiber and base are biangles, with standard labels in the base. Despite the simplicity of fiber and base, these GKM fibrations fall into $10$ different families, distinguished by a) whether the underlying unlabelled graph is a product or not, b) the behaviour of the connection along horizontal edges, and c) the number of occurring signs in the GKM congruence relations. An additional obstruction to realizability is that the GKM graph of the total space needs to be orientable in the sense of \cite[Section 2.3]{GKMcorrespondence}, see Corollary 2.24 therein. Orientability of the GKM graph of the total space holds true for exactly $5$ of the above $10$ families.
We prove in Section \ref{sec:construction} that these necessary combinatorial conditions are in fact sufficient:
\begin{thm}\label{thm: A}
Every orientable GKM graph that is a GKM fibration with biangle fibers over the GKM graph of the standard $T^2$-action on $S^4$ is realizable as an equivariant linear $S^4$-bundle over $S^4$.
\end{thm}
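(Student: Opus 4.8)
The plan is to prove the theorem by explicit $T^2$-equivariant clutching, treating each of the five orientable families produced in Section~\ref{sec:combinatorial} separately. Write the base as $S^4=S(\CC_{x_1}\oplus\CC_{x_2}\oplus\RR)$ with its standard $T^2$-action and fixed points $p=(0,0,1)$, $q=(0,0,-1)$, and decompose it $T^2$-equivariantly as $S^4=D_p\cup_{S^3}D_q$, where $D_p$, $D_q$ are the invariant closed $4$-disks $\{r\ge 0\}$ and $\{r\le 0\}$ around $p$ resp.\ $q$ and the equator $S^3\subset\CC^2$ carries the restricted standard $T^2$-action. Since each disk deformation retracts $T^2$-equivariantly onto its fixed point, every $T^2$-equivariant real rank-$5$ bundle over $S^4$ is isomorphic to one obtained by clutching the trivial bundles $D_p\times V_p$ and $D_q\times V_q$ along $S^3$ by a clutching function $\phi\colon S^3\to\Hom(V_p,V_q)$ into linear isomorphisms that satisfies the twisted equivariance $\phi(t\cdot x)=t\circ\phi(x)\circ t^{-1}$; here $V_p$, $V_q$ are $5$-dimensional real $T^2$-representations. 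The resulting bundle $E$ yields the candidate total space $M=S(E)$, a $T^2$-equivariant linear $S^4$-bundle over $S^4$, and one reads off from the family data that the $V_\bullet$ must have the form $\CC_{u_\bullet}\oplus\CC_{v_\bullet}\oplus\RR$ with $u_\bullet,v_\bullet\neq 0$ and the trivial summand spanning the axis through the two fiber fixed points, so that $\mathrm{Fix}(T^2,M)$ is the set of four poles of the fibers over $p$ and $q$.

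The heart of the construction is the choice of $\phi$. Since the orbit space $S^3/T^2$ is an interval whose endpoints correspond to the circles $\{z_2=0\}$ and $\{z_1=0\}$, with isotropy $1\times T^1$ resp.\ $T^1\times 1$, a twistedly equivariant $\phi$ is equivalent to a path $\gamma\colon[0,\tfrac\pi2]\to\On(5)$, $\theta\mapsto\phi(\cos\theta,\sin\theta)$, such that $\gamma(0)$ intertwines the restrictions of $V_p$ and $V_q$ to $1\times T^1$ and $\gamma(\tfrac\pi2)$ intertwines their restrictions to $T^1\times 1$; conversely, any such path extends to a clutching function by $\phi(t\cdot(\cos\theta,\sin\theta)):=t\circ\gamma(\theta)\circ t^{-1}$, where well-definedness at the two endpoints is exactly the two intertwining conditions. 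For each of the five families I would exhibit such a $\gamma$ explicitly, built from rotations in the coordinate complex planes of the $V_\bullet$ together with possible permutations of the $\CC_{u}$, $\CC_{v}$ and $\RR$ summands. The finitely many discrete choices involved — which planes rotate, with which speeds and signs, and how the summands are permuted at the endpoints — are exactly what produce the different behaviours of the connection along horizontal edges and the different sign patterns in the GKM congruence relations distinguishing the ten combinatorial families.

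Finally, one verifies for each constructed $E$ that the GKM graph of $M=S(E)$ coincides with the prescribed one: (i) the four isotropy representations of $T^2$ on the tangent spaces at the four fixed points each split into two vertical weights coming from $V_p$ or $V_q$ and two horizontal weights coming from the base; and (ii) the GKM connection has vertical part equal to that of the fiber, while its horizontal part along a given base edge is determined by the restriction of $E$ to the corresponding invariant $2$-sphere in the base, equivalently by the germ of $\gamma$ at the relevant endpoint of $[0,\tfrac\pi2]$. I expect the main obstacle to be the interplay described above: one must simultaneously satisfy the endpoint intertwining conditions — which pin down $V_p$, $V_q$ and the endpoints of $\gamma$ — and arrange the connection and all congruence signs to come out exactly as the combinatorial data dictates. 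The orientability hypothesis on the GKM graph, shown in Section~\ref{sec:combinatorial} to be necessary, is precisely what makes such a coherent choice of $\gamma$ (hence of $E$, and of the orientable total space $M=S(E)$) possible; for the other five families no such choice exists, consistent with those graphs not being realizable by any GKM manifold at all. A last routine point is to check that the construction does not depend on the auxiliary choices — the disks $D_p$, $D_q$, the equivariant trivializations, and $\gamma$ within its constraints — so that $M$ is well defined up to $T^2$-equivariant diffeomorphism.
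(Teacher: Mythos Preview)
Your approach is essentially the paper's: both realize each of the five orientable families by explicit $T^2$-equivariant clutching of trivial bundles over the two invariant hemispheres along the equator $S^3$. The paper carries this out with concrete formulas rather than via your orbit-space reduction: it builds maps $A_{n,m}\colon S^3\to\SU(2)$ (twistedly equivariant in exactly your sense), embeds $\SU(2)$ into $\SO(5)$ via left or right quaternionic multiplication on $\RR^4\cong\HH$ for the cases \textbf{PD}$++$ and \textbf{PD}$--$, and via the double cover $\SU(2)\to\SO(3)$ sitting in the lower $3\times 3$ block for \textbf{TA}$+-$; the remaining families \textbf{PA}$--$ and \textbf{TD}$+-$ are obtained by composing gluing maps already constructed. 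Your reduction to a path $\gamma\colon[0,\tfrac\pi2]\to\On(5)$ with endpoint intertwining conditions is a clean equivalent reformulation --- the paper's $A_{n,m}$ restricted to $(\cos\theta,\sin\theta)$ are precisely such paths --- though the paper's explicit quaternionic packaging has the advantage of being concrete enough to feed directly into the Pontryagin-class computations of Section~\ref{sec:topclassification}.

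Two minor points: (i) the clutching maps one writes down this way are typically only continuous (the paper's $A_{n,m}$ fail to be smooth on the circles $z_i=0$), so an equivariant smoothing step is needed, and the paper supplies one via equivariant classifying maps; (ii) your final ``routine'' check that the construction is independent of all auxiliary choices is unnecessary for the theorem, which asks only for existence of a realization, and indeed is not obviously true for arbitrary $\gamma$ satisfying your endpoint constraints.
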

The proof proceeds by giving an explicit clutching construction for each of the 5 remaining families. This extends known realization results: \cite[Theorem 5.1]{MR4634089} and \cite[Theorem 1.1]{GKMcorrespondence}, which are tailored to dimension 6, and \cite[Theorem 1.1]{GKMflagbundles}, which focuses on GKM fibrations whose fiber is the graph of a generalized flag manifold. It is worth noting that in the flag case, the previous reference shows that the analogue of Theorem \ref{thm: A} does not hold: Beyond dimension 6, not every orientable GKM fibration is realizable by a $T$-equivariant fiber bundle of GKM manifolds. In general it is not known whether any GKM graph which satisfies the known combinatorial obstructions is indeed realizable by a GKM manifold. In a previous paper  \cite{GKMnonrigid} we constructed GKM actions on other $8$-dimensional total spaces of sphere bundles over spheres, namely the two $S^2$-bundles over $S^6$, in order to find examples of GKM actions with exotic behaviour. Namely, those examples provided GKM actions on manifolds with identical GKM graph which are not homotopy equivalent. The examples we consider in the paper at hand display very different behaviour -- in particular, it will turn out that they are determined up to diffeomorphism by their GKM graph.
 
Using GKM theory we determine in Section \ref{sec:topclassification} the first Pontryagin class of all occurring total spaces, which, as described in Section \ref{SubSec: Sphere bundles}, yields the clutching class in $\pi_3(\SO(5))$ (up to sign) -- see Theorem \ref{thm: clutching numbers} for the explicit values.
Furthermore a result of James--Whitehead \cite[p.\ 217]{JamesWhitehead} answers the question which of the corresponding spaces are homotopy equivalent in terms of the clutching class. With regards to our original motivation of finding examples we arrive at the following theorem, proven at the end of Section \ref{sec:topclassification}, which gives a negative answer to possible Petrie-type questions on GKM manifolds:
\begin{thm}\label{thm:examples}
Among the GKM actions on the total space of equivariant linear $S^4$-bundles over the standard action on $S^4$ there are examples of GKM manifolds which are (non-equivariantly) homotopy equivalent but no homotopy equivalence preserves the first Pontryagin class. In particular they are not homeomorphic.
\end{thm}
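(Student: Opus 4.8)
The plan is to combine the first Pontryagin class computation of Theorem~\ref{thm: clutching numbers} with the homotopy classification of sphere bundles over spheres due to James--Whitehead, and then to use the (very simple) cohomology rings of the total spaces to upgrade ``the evident identification does not match $p_1$'' into ``no homotopy equivalence matches $p_1$''.

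First I would record the cohomological facts about a linear $S^4$-bundle $\pi\colon M\to S^4$ with clutching class $\alpha\in\pi_3(\SO(5))\cong\ZZ$ and underlying oriented rank-$5$ bundle $F$. Since $e(F)\in H^5(S^4)=0$, the Gysin sequence gives a split exact sequence $0\to H^4(S^4)\xrightarrow{\pi^*}H^4(M)\to H^0(S^4)\to 0$; writing $M=S(E'\oplus\RR)$ for a rank-$4$ bundle $E'$ and choosing the primitive fibre class $f=\pi^*\omega$ ($\omega$ a generator of $H^4(S^4)$) and a section class $b$ one finds $H^*(M)\cong\ZZ[b,f]/(f^2,\ b^2-e(E')\,bf)$, so the ring depends only on $e(E')\bmod2=\alpha\bmod2$ and the set of primitive square-zero classes in $H^4(M)$ is a short explicit list (namely $\{\pm b,\pm f\}$, resp.\ $\{\pm f,\pm(2b-f)\}$, according to the parity). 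Moreover $TM\cong\pi^*TS^4\oplus T^vM$ with $T^vM\oplus\RR\cong\pi^*F$, whence $p_1(M)=\pi^*p_1(F)=\pm2\alpha\,f$; this is exactly the relation between $p_1$ and $\alpha$ underlying Theorem~\ref{thm: clutching numbers}.

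The argument then runs in three steps. (i) Invoke James--Whitehead~\cite[p.~217]{JamesWhitehead}: the homotopy type of the total space of an $S^4$-bundle over $S^4$ is determined by the clutching class modulo a fixed congruence $\alpha\equiv\pm\alpha'\pmod N$, with $N$ coming from $J$-homomorphism and Whitehead-product data ($M_\alpha\simeq(S^4\vee S^4)\cup_{w_\alpha}e^8$ for an attaching map $w_\alpha$ built from $[\iota_1,\iota_2]$ and $J(\alpha)\in\pi_7(S^4)$). (ii) Inspecting the clutching numbers tabulated in Theorem~\ref{thm: clutching numbers}, single out two realized total spaces $M,M'$ (both from orientable families) with clutching classes $\alpha\equiv\pm\alpha'\pmod N$ but $|\alpha|\neq|\alpha'|$ (hence $\alpha\alpha'\neq0$); by (i) these are homotopy equivalent. (iii) Rule out that any homotopy equivalence $h\colon M\to M'$ satisfies $h^*p_1(M')=p_1(M)$: such $h$ induces a ring isomorphism $h^*\colon H^*(M')\to H^*(M)$, which must send the primitive square-zero class $f'$ into the short list above; comparing coordinates in the basis $\{b,f\}$, the class $h^*p_1(M')=\pm2\alpha'\,h^*(f')$ either has a nonzero $b$-coordinate or has $f$-coordinate of absolute value $2|\alpha'|$, whereas $p_1(M)=\pm2\alpha\,f$ has zero $b$-coordinate and $f$-coordinate $\pm2\alpha$ --- impossible since $\alpha\alpha'\neq0$ and $|\alpha|\neq|\alpha'|$. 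Finally, a homeomorphism between $M$ and $M'$ would in particular be such a homotopy equivalence and, by Novikov's theorem on topological invariance of rational Pontryagin classes, would have to carry $p_1(M')$ to $p_1(M)$ rationally, which the same computation excludes; hence $M$ and $M'$ are not homeomorphic.

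The step I expect to be the real obstacle is the bookkeeping bracketing the soft middle argument: upstream, pinning down the precise modulus $N$ from \cite{JamesWhitehead} for $S^4$-bundles over $S^4$ and checking against the explicit list in Theorem~\ref{thm: clutching numbers} that a qualifying pair $(\alpha,\alpha')$ actually occurs among the five orientable families; and downstream, the control of $\mathrm{Aut}(H^*(M))$ needed to confirm that the Pontryagin datum separating the examples (the $f$-coordinate of $p_1$, up to sign) is a genuine homotopy invariant, so that distinct $|\alpha|$ are distinguished by \emph{every} homotopy equivalence and not merely the obvious one.
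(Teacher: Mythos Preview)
Your outline is correct in spirit and would prove the theorem, but it is considerably more elaborate than the paper's argument, and it contains one small logical slip.

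The paper's proof is a one-liner: it exhibits the pair $M^{\mathbf{PD}++}_{1,-1,2,23}$ and $S^4\times S^4$. By Theorem~\ref{thm: clutching numbers} the clutching numbers are $\pm(1-2)(-1-23)=\pm 24$ and $0$, so by James--Whitehead (the modulus is $N=24$, stated explicitly in the paper just before Lemma~\ref{lem:homeoclutching}) the two total spaces are homotopy equivalent, while by Corollary~\ref{C: clutching function pontryagin} their first Pontryagin classes are $\pm 48\cdot\pi^*(\iota)\neq 0$ and $0$. Since any homotopy equivalence induces a cohomology isomorphism and must take $0$ to $0$, no homotopy equivalence can match the Pontryagin classes; non-homeomorphy then follows from Novikov as in Lemma~\ref{lem:homeoclutching}. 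The key point is that by choosing one of the two examples to have vanishing $p_1$, the entire analysis of $\mathrm{Aut}(H^*(M))$ that you carry out in step~(iii) becomes unnecessary.

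Your approach, by contrast, develops the general machinery: you classify the primitive square-zero classes in $H^4(M)$ and show that for \emph{any} pair with $|\alpha|\neq|\alpha'|$ and $\alpha\alpha'\neq 0$ no homotopy equivalence preserves $p_1$. This is a genuinely stronger statement and is correct as written, so it buys you more than the paper's proof (you could exhibit pairs where neither space is a product). The cost is the extra cohomological bookkeeping, which the paper avoids entirely.

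One slip to fix: in step~(ii) you write ``$|\alpha|\neq|\alpha'|$ (hence $\alpha\alpha'\neq 0$)''. This implication is false --- the paper's own example $(\alpha,\alpha')=(24,0)$ has $|\alpha|\neq|\alpha'|$ but $\alpha\alpha'=0$. You should simply \emph{impose} $\alpha\alpha'\neq 0$ as an additional requirement on the chosen pair (and check that such a pair occurs in the table, e.g.\ clutching numbers $3$ and $27$ in the $\mathbf{PD}++$ family), or else observe that the case where exactly one of $\alpha,\alpha'$ vanishes is trivial anyway.
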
 


While thus far we considered actions of $T^2$ (the minimal dimension for a GKM action, in the final Section \ref{sec:extensions} we complete the picture with regards to higher dimensional tori. More specifically in Theorem \ref{thm: extensions}, we determine for each of the $5$ families in our classification of $T^2$-equivariant bundles the maximal $k$ such that the actions can be extended to an effective $T^k$-action and furthermore prove that this extension can be realized in our examples. Since every GKM $T^k$-action restricts to a GKM $T^2$-action, this settles the previously considered classification and realization questions for all torus dimensions.
\\

 \noindent {\bf Acknowledgements:} We gratefully acknowledge funding of the Deutsche Forschungsgemeinschaft (DFG, German Research Foundation) -- 452427095. We wish to thank Shintaro Kuroki for pointing out to us the toric examples mentioned in Remark \ref{rem:blowup}. Furthermore, we are extremely grateful to G.\ Back for the pleasant working atmosphere.

\section{Preliminaries}

\subsection{GKM actions}\label{SubSec: GKM actions}

The notion of a GKM action dates back to the seminal paper \cite{MR1489894} of Goresky, Kottwitz
and MacPherson. These are certain torus actions on manifolds; the
motivation for their definition is that their equivariant cohomology is determined by the
action on their equivariant one-skeleton, i.e., the union of orbits of dimension at most $1$, which in turn can be encoded in purely combinatorial data.
In \cite{MR1823050}, Guillemin and Zara synthesized this data into an independent combinatorial object
called an \emph{(abstract) GKM graph}. A main theme in GKM theory is to understand the interplay
between GKM graphs and properties of GKM manifolds. Let us start with introducing the
basic notions of GKM theory.

\begin{defn}\label{D: GKM}
Let $M$ be a $2n$-dimensional compact and orientable manifold.
Furthermore, let $T=T^{k}$ be a torus of rank $k$ acting on $M$. If
\begin{enumerate}[label=(\alph*)]
	\item $H^{\mathrm{odd}}(M;\mathbb{Z}) = 0$,
	\item  the fixed point set $M^{T} = \{ p \in M \mid T \cdot p = \{p\}\}$ is finite and if
	\item the \emph{equivariant $1$-skeleton} $M_{1} = \{ p \in M\mid  \dim(T \cdot p) \leq 1\}$ is a finite 
		union of $T$-invariant $2$-spheres,
\end{enumerate}
then we call $(M,T)$ a \emph{GKM manifold} and the action is called a \emph{GKM action}.
\end{defn}

Consider a GKM action of a torus $T$ on a manifold $M$, and $p \in M^{T}$. Choose a complex structure on $T_{p}M$ invariant under the isotropy representation. As a complex representation the action of $T$ on $T_{p}M$ decomposes as
\[
	T_{p}M = \bigoplus_{\lambda} V_{\lambda}
\]
into $n$ summands, where $\lambda \in \operatorname{Hom}(T,S^{1}) \cong \mathbb{Z}^{k}$ are
the \emph{characters} and 
\[
	V_{\lambda} = \{ v \in V \mid t \cdot v = \lambda(t)v \textrm{ for all } t \in T\} .
\]
Sometimes we will not distinguish between the characters and the \emph{weights} which are given as
the differentials of the characters. Indeed, there is a canonical isomorphism
\[
	\operatorname{Hom}(T,S^{1}) \to \operatorname{Hom}(\mathbb{Z}_{\mathfrak t},
	\mathbb{Z}) = \mathbb{Z}^{\ast}_{\mathfrak t}, \quad \lambda \mapsto d
	\lambda|_{\mathbb{Z}_{\mathfrak t}} =: \alpha,
\]
where $\mathfrak t$ is the Lie algebra of $T$ and $\mathbb{Z}_{\mathfrak t} = \ker
\mathrm{exp}$ the \emph{weight lattice}.
The weights as a real representation are given by $\pm \alpha$
and the respective weight spaces are $V_{\pm \alpha} = V_{\alpha} \oplus V_{-\alpha}$. 
Consequently, we consider the weights as elements in $\mathbb{Z}_{\mathfrak t}^{\ast}/\pm
1$. 

The condition that the equivariant one-skeleton is a finite union of $2$-spheres implies that every $V_{\alpha}$ corresponds to an invariant $2$-sphere in $M_{1}$, as it is
the tangent space of such a sphere in a fixed point. The weight $\alpha$
determines the action of $T$ on its respective invariant sphere. It also follows that any two
weights at a fixed point are linearly independent.

\begin{rem}\label{R: ineffective torus action}
In Definition \ref{D: GKM} we did not require that the torus acts effectively. In case the
GKM action is not effective, we may consider the kernel $K\subset T$ of the action, i.e., the closed subgroup of elements that act trivially on $M$. Its Lie algebra $\mathfrak k$ is the Lie subalgebra of $\mathfrak t =
\mathrm{Lie}(T)$ given by the common kernel of all occuring weights. The action of $T/K$ is again of GKM type, and effective.
\end{rem}

The quotient $M_{1}/T$ is homeomorphic to a graph whose vertex set is the fixed point
set $M^{T}$; two vertices are connected by an edge for every $T$-invariant $2$-sphere that connects them. It turns out that this graph together with the respective
weights along the invariant $2$-spheres is a key combinatorial object which encodes
 properties of GKM manifolds. We now recall the abstract definition of GKM graph and explain afterwards how a
GKM manifold induces such a graph. 

\begin{defn}  \label{D: GKM graph}
Let $\Gamma$ be an $n$-valent graph without loops and denote by $V(\Gamma)$ the set of vertices of
$\Gamma$ and by $E(\Gamma)$ the set of edges. We include each edge twice, once for each
orientation. For $e \in E(\Gamma)$, let $\overline{e}$ denote the edge $e$ with
opposite orientation. Furthermore, let $i(e)$ denote the initial vertex of $e$ and $t(e)$
its terminal vertex and for $v \in V(\Gamma)$, let $E_v$ be the set of edges with initial vertex $v$. A \emph{connection} $\nabla$ on $\Gamma$ is a bijective map $\nabla_{e} \colon E_{i(e)}
\to E_{t(e)}$ such that for all $e \in E(\Gamma)$
\begin{enumerate}[label=(\roman*)]
	\item $\nabla_{e}e = \overline{e}$,
	\item $(\nabla_{e})^{-1} = \nabla_{\overline{e}}$
\end{enumerate}
holds. 

A \emph{GKM graph} is a pair $(\Gamma, \alpha)$, consisting of a graph $\Gamma$ as above and an \emph{axial function}
$\alpha \colon E(\Gamma) \to \mathbb{Z}^{k}/\pm 1$ such that the following holds:
\begin{enumerate}[label=(\alph*)]
	\item For every $v \in V(\Gamma)$ and every $e,f \in E(\Gamma)_{v} = \{ e \in E(\Gamma) \mid
		i(e)=v\}$ with $e \neq f$ the elements $\alpha(e)$ and $\alpha(f)$ are linearly
		independent (note that linear independence in this setting is well-defined via the map
		$\mathbb{Z}^{k} \to \mathbb{Z}^{k}/ \pm 1$).
	\item For all $e \in E(\Gamma)$ we have $\alpha(\overline{e}) = \alpha(e)$.
		\item  There is a connection $\nabla$ on $\Gamma$ \emph{compatible} with $\alpha$. That is, if $v \in
		V(\Gamma)$, $e,f \in E(\Gamma)_{v}$ and if for any lift $\tilde \alpha \colon
		E(\Gamma) \to \mathbb{Z}^{k}$ of $\alpha$ along $\mathbb{Z}^{k} \to \mathbb{Z}^{k}/\pm
		1$ there are $\varepsilon \in \{ \pm 1\}$ and $c \in \mathbb{Z}$ such that
		\[
			\tilde \alpha(\nabla_{e}f) = \varepsilon \tilde \alpha(f) + c \tilde \alpha(e)
		\]
		holds.
\end{enumerate}
Finally, we call a GKM graph $(\Gamma, \alpha)$ \emph{effective} if at one vertex $v$ (and hence for
all) the values of the axial function at $E(\Gamma)_v$ generate $\ZZ^k$. 
\end{defn}

Suppose $M$ is a GKM manifold, where $T$ is a torus of rank $k$ acting on $M$. As described
above, $M_{1}/T$ is homeomorphic to a graph $\Gamma$. The weights of the isotropy
representations at every fixed point determine an axial function $\alpha$ on $\Gamma$. More precisely,
the weights are elements of $\operatorname{Hom}(\mathbb{Z}_{\mathfrak t}, \mathbb{Z}) =
\mathbb{Z}^{\ast}_{\mathfrak t}$, defined up to sign. Choosing an isomorphism $T \cong S^{1} \times \ldots
\times S^{1}$ we may identify $\mathbb{Z}^{\ast}_{\mathfrak t}$ with $\mathbb{Z}^{k}$,
which then gives a map $\alpha \colon E(\Gamma) \to \mathbb{Z}^{k}/ \pm 1$.
Furthermore, in  \cite[Proposition 2.3]{MR4363804} and in \cite{MR1823050} it was proven
that there always exists a compatible connection. Hence, we may associate to every GKM
manifold a GKM graph. Effectivity of the GKM graph corresponds to the condition of the $T$-action being effective, cf.\ Remark \ref{R: ineffective torus action}.

\begin{rem}\label{rem:signedunsigned}
\begin{enumerate}
\item
The choice of connection is sometimes defined as part of the data of an abstract GKM graph. We choose not to fix it in the definition since it is in general not canonical (see \ref{rem: overlap} for a discussion of the ambiguity in our examples).
\item
In the literature on GKM theory, often torus actions on almost complex or symplectic manifolds are considered. In these situations, the weights of the isotropy representation become elements of $\mathbb{Z}^{\ast}_{\mathfrak t}$, resulting in a slightly different definition of a GKM graph in which one does not need to consider any lift of the axial function, and in which the signs $\varepsilon$ in Definition \ref{D: GKM graph} are always identically $1$. See, e.g., \cite{MR1823050}. In order to distinguish these situations, one also speaks about \emph{signed} and \emph{unsigned} GKM graphs. In this article, we only consider unsigned GKM graphs; indeed, $S^4$ does not admit any almost complex structure.
\end{enumerate}
\end{rem}

As observed in \cite[Section 2.3]{GKMcorrespondence} the orientability condition of a GKM manifold puts restrictions on the GKM graph of a GKM manifold. Given a  GKM graph $(\Gamma,\alpha)$, let us choose a compatible connection $\nabla$ as well as an arbitrary lift of the axial function $\alpha:E(\Gamma)\to \ZZ^k/\pm 1$ to a function $\tilde{\alpha}:E(\Gamma)\to \ZZ^k$ with $\tilde{\alpha}(\overline{e})=\tilde{\alpha}(e)$ for all $e$. We construct a map $\eta:E(\Gamma)\to \{\pm 1\}$ in the following way: for $e\in E(\Gamma)$, denote by $e_1:=e,e_2,\ldots,e_n$ the edges of $\Gamma$ emerging from $i(e)$. Then there are unique $\epsilon_i\in\{\pm 1\}$, $i=2,\ldots,n$, satisfying 
\[\tilde{\alpha}(\nabla_e(e_i))\equiv \varepsilon_i\tilde{\alpha}(e_i)\mod \tilde{\alpha}(e)\]
in $\mathbb{Z}^k$. We put $\eta(e)=-\varepsilon_2\cdot\ldots\cdot\varepsilon_n$.
\begin{defn} \label{defn:orientable}
	We call $(\Gamma,\alpha)$ \emph{orientable} if there is a choice of lift $\tilde{\alpha}$ of $\alpha$ and compatible connection $\nabla$ such that for every closed edge path $e_1,\ldots,e_m$ in $\Gamma$ one has \[\prod_{i=1}^m \eta(e_i)=1.\]
\end{defn}
It was observed in \cite{GKMcorrespondence} that for an orientable GKM graph any choice of lift $\tilde{\alpha}$ as above and compatible connection $\nabla$ will fulfil the condition in the definition. Then Corollary 2.24 in \cite{GKMcorrespondence} states:
\begin{prop}\label{prop:GKMgraphorientable}
The GKM graph of a GKM manifold is orientable.
\end{prop}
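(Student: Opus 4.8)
The plan is to exhibit, for the GKM graph $(\Gamma,\alpha)$ arising from a GKM manifold $(M,T)$, a canonical choice of lift $\tilde\alpha$ and compatible connection $\nabla$ for which the sign function $\eta$ is identically $1$; then the condition in Definition \ref{defn:orientable} holds trivially (each factor is $1$), and orientability follows. The geometric input is an honest orientation of $M$, which we will use to orient the tangent spaces $T_pM$ at fixed points $p\in M^T$ coherently, and to orient the invariant $2$-spheres consistently with the connection coming from parallel transport.

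First I would fix a $T$-invariant complex structure $J_p$ on each $T_pM$, as in the discussion after Definition \ref{D: GKM}, so that the isotropy representation splits as $\bigoplus_\lambda V_\lambda$ into complex lines, and choose the actual characters $\lambda$ (not just weights mod $\pm 1$); this gives a lift $\tilde\alpha$ on the edges emanating from each vertex, well-defined once we decide which of $\pm\lambda$ to take. The invariant $2$-sphere $S_e$ corresponding to an edge $e$ from $p$ to $q$ carries $T$-equivariant normal bundle whose fiber over $p$ is $\bigoplus_{f\in E_p,\, f\neq e} V_{\tilde\alpha(f)}$; the Guillemin--Zara connection $\nabla_e$ along $e$ is exactly the identification of this normal bundle over $p$ with its counterpart over $q$ (via a section of the frame bundle, or via the isotopy along $S_e$), and under this identification the weight of the summand over $q$ matching $f$ is $\tilde\alpha(\nabla_e f)$, which differs from $\tilde\alpha(f)$ by adding a multiple of $\tilde\alpha(e)$ and possibly a sign $\varepsilon_f$. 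The key point is that $\varepsilon_f = -1$ occurs precisely when the complex line bundle over $S_e\cong S^2$ with fiber $V_{\tilde\alpha(f)}$ at $p$ has odd first Chern number evaluated against $T$-weights — more usefully, the product $\prod_{f\neq e}\varepsilon_f$ records whether the real normal bundle of $S_e$, as a $T$-equivariant bundle, is orientation-compatible at the two ends in a way dictated by the chosen $J_p$, $J_q$.

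The heart of the argument, and the main obstacle, is relating $\eta(e)=-\prod_{f\neq e}\varepsilon_f$ to the orientation of $M$: the factor $-1$ in the definition of $\eta$ accounts for the tangent direction along $S_e$ itself (which the connection reverses, $\nabla_e e=\bar e$), so $\eta(e)$ is precisely the sign comparing the orientation of $T_pM$ induced by $J_p$ with the orientation of $T_qM$ induced by $J_q$, transported along $S_e$ using the splitting into tangent-to-$S_e$ plus normal bundle. Concretely, parallel transport along $S_e$ of an oriented frame of $T_pM$ (using the tangent/normal splitting and the connection) yields an oriented frame of $T_qM$; comparing it to the $J_q$-orientation gives exactly $\eta(e)$. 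If we choose the complex structures $J_p$ so that all the induced orientations on the $T_pM$ agree with the fixed global orientation of $M$ — which is possible since $M$ is orientable and each $T_pM$ then inherits a preferred orientation — this comparison sign is $+1$ for every edge, i.e.\ $\eta\equiv 1$. I would make the transport comparison rigorous by working in the oriented frame bundle of $M$ restricted to $S_e$, which is connected, so the orientation class is locally constant along it; the subtlety to handle carefully is that the $J_p$-orientation on $T_pM$ depends only on $J_p$ and not on the ordering or signs of the individual characters, so the identity $\eta\equiv 1$ is independent of the remaining choices in the lift $\tilde\alpha$.

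Finally, once $\eta\equiv 1$ for this choice of $(\tilde\alpha,\nabla)$, every closed edge path has $\prod_i\eta(e_i)=1$, so $(\Gamma,\alpha)$ is orientable by Definition \ref{defn:orientable}; for completeness I would invoke the remark preceding the proposition that any other compatible choice then also satisfies the condition. This is essentially the content of \cite[Section 2.3, Corollary 2.24]{GKMcorrespondence}, and I would cite it for the bookkeeping details of the frame-bundle argument while keeping the conceptual line above as the proof.
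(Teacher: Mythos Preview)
The paper does not give its own proof of this proposition; it simply states the result as Corollary 2.24 of \cite{GKMcorrespondence}. Your proposal outlines the geometric argument behind that corollary (choose invariant complex structures at fixed points compatible with a global orientation of $M$, then identify $\eta(e)$ with the orientation comparison along the invariant sphere $S_e$) and explicitly defers to the same reference for the details, so you are in agreement with the paper's approach.
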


As we will be interested specifically in sphere bundles over spheres, we need to review the notion of a GKM fibration. It was introduced in \cite{GuilleminSabatiniZara} although the definition below differs slightly in that we consider unsigned GKM graphs and do not fix connections as part of the data of a GKM graph (cf.\ also \cite{GKMflagbundles}). It was shown in \cite[Proposition 3.7]{MR4634089} that indeed a $T$-equivariant fiber bundle whose total space and base are GKM manifolds induces the structure of a GKM fibration on the level of GKM graphs in the sense of the following

\begin{defn}\label{defn:GKMfibration}
\begin{enumerate}
\item A \emph{morphism} $\Gamma\rightarrow B$ between two graphs is a map $\pi\colon V(\Gamma)\rightarrow V(B)$ as well as a partially defined map $\pi\colon E(\Gamma)\rightarrow E(B)$ which associates to every edge $e\in E(\Gamma)$ with $\pi(i(e))\neq \pi(t(e))$ an edge $\pi(e)$ between $\pi(i(e))$ and $ \pi(t(e))$. Edges with this property are called \emph{horizontal}. The others are called \emph{vertical}. Horizontal edges emanating from $v\in V(\Gamma)$ are denoted by $H_v$. The morphism is called a \emph{graph fibration} if for all $v\in V(\Gamma)$ the map $H_v\rightarrow B_{\pi(v)}$ is an isomorphism.

\item A \emph{GKM fibration} $(\Gamma,\alpha)\rightarrow (B,\alpha_B)$ between two GKM graphs consists of a graph fibration $\Gamma\rightarrow B$ that is compatible with choices of connections $\nabla$, $\nabla^B$ on $\Gamma$, $B$ in the sense that
\begin{enumerate}[label=(\alph*)]
\item $\alpha (\pi(e))= \alpha_B(e)$ for all horizontal edges $e$.
\item For every $e\in E(\Gamma)$ the map $\nabla_e$ respects the decomposition in horizontal and vertical edges.
\item $\nabla$ covers $\nabla_B$ in the sense that for any $e,f\in H_v$, $v\in V(\Gamma)$, we have $\pi(\nabla_e(f))=\nabla^B_{\pi(e)}(\pi(f))$
\end{enumerate}
\end{enumerate}
\end{defn}

\subsection{Cohomology}\label{SubSec: }
All cohomologies are understood with integer
coefficients.
For a torus $T$ let $T \to ET \to BT$ be the universal $T$-bundle and assume that $T$ acts
on a manifold $M$. The \emph{Borel construction} is defined as
\[
	M_{T} := ET \times_{T} M,
\]
i.e. $M_{T}$ is the associated $M$-bundle $\pi \colon M_{T} \to BT$ to $ET \to BT$. The
singular cohomology $H^{\ast}(M_{T})$ of $M_{T}$ is denoted by $H_{T}^{\ast}(M)$ and is
called the \emph{equivariant cohomology} of the $T$-action on $M$. The projection $\pi$
induces by pull-back an $H^{\ast}(BT)$-module structure on $H^{\ast}_{T}(M)$. The
cohomology ring $R:=H^{\ast}(BT)$ is isomorphic to the polynomial ring $\mathbb{Z}[x_{1},
\ldots, x_{k}]$, where $x_{i}$ are of degree $2$ and $k$ is the rank of $T$. To see this, recall the isomorphisms 
\[
BT \cong BS^{1} \times \ldots \times BS^{1} \cong \mathbb C\mathbb P^{\infty} \times
\ldots \times \mathbb C\mathbb P^{\infty}
\]
and $H^{\ast}(\mathbb C\mathbb P^{\infty}) \cong \mathbb{Z}[x]$, where $x$ is of degree
$2$. In particular if the manifold is a point $M = \ast$, then $H_{T}^{\ast}(\ast) =
H^{\ast}(BT)=R$.

\begin{rem}\label{rem: weights and R}
Let $\lambda \in \operatorname{Hom}(T,S^{1})$ be a character which defines a $T$-representation on $\mathbb{C}$. Consider the equivariant map $\mathbb{C} \to \{p\}$, where
$\{p\}$ is equipped with the trivial $T$-action. This induces a complex vector bundle
$L_{\lambda} = ET \times_{T} \mathbb{C} \to ET \times_{T} \{p\}$ and a first Chern class
$c_{1}(L_{\lambda}) \in H^{2}_{T}(\{p\}) = H^{2}(BT)$. This defines an isomorphism from
$\operatorname{Hom}(T,S^{1})$ to $H^{2}(BT)$. By abuse of notation, we denote the image of the
characters in $H^{2}(BT)$ again by $\lambda$.
\end{rem}

We say the $T$-action on $M$ is \emph{equivariantly formal (over $\ZZ$)} if $H_{T}^{\ast}(M)$ is free as
an $R$-module. It turns out that GKM manifolds are equivariantly formal:

\begin{thm}[{\cite[Proposition 2.22]{MR4840842}}]\label{T: equivariantly formal}
Let $T$ act on a compact manifold $M$ with isolated fixed points. Then the following conditions are equivalent. 
\begin{enumerate}[label=(\alph*)]
	\item The action of $T$ on $M$ is equivariantly formal.
	\item $H^{\mathrm{odd}}(M) = 0$.
\end{enumerate}
\end{thm}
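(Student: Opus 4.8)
The plan is to prove the equivalence of equivariant formality (freeness of $H_T^*(M)$ over $R$) with the vanishing of odd cohomology, using the standard machinery of the Serre spectral sequence of the Borel fibration $M \hookrightarrow M_T \to BT$ together with the localization theorem. For the direction (b) $\Rightarrow$ (a): since $H^{\mathrm{odd}}(M) = 0$, the $E_2$-page $H^*(BT) \otimes H^*(M)$ of the Serre spectral sequence is concentrated in even total degree (recall $H^*(BT) = R$ is a polynomial ring on degree-$2$ generators, hence itself concentrated in even degrees). Therefore all differentials vanish for parity reasons, the spectral sequence degenerates at $E_2$, and $H_T^*(M)$ is, as an $R$-module, the associated graded of $H^*(BT) \otimes H^*(M)$; a filtration argument (each graded piece is a free $R$-module, and the extensions split since $R$ is a polynomial ring over $\ZZ$ and everything is finitely generated in each degree with the filtration bounded in fiber degree) shows $H_T^*(M)$ is free.

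For the converse (a) $\Rightarrow$ (b) I would argue by contradiction, or more directly via dimension counting. Suppose $H_T^*(M)$ is free over $R$. Since $M^T$ is finite, the localization theorem tells us that after inverting finitely many characters, the restriction $H_T^*(M) \otimes_R \mathrm{Quot}(R) \to H_T^*(M^T) \otimes_R \mathrm{Quot}(R)$ is an isomorphism; in particular the rank of the free $R$-module $H_T^*(M)$ equals $|M^T|$. On the other hand, setting the generators $x_i$ of $R$ to zero (i.e.\ tensoring with $\ZZ = R/(x_1,\ldots,x_k)$) in the Serre spectral sequence, one has the edge homomorphism $H_T^*(M) \to H^*(M)$, and freeness forces this to be surjective with $H^*(M) \cong H_T^*(M) \otimes_R \ZZ$. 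Freeness of rank $|M^T|$ then gives $\sum_i \dim_{\QQ} H^i(M;\QQ) = |M^T| = \sum_i \dim_{\QQ} H^i(M^T;\QQ)$, while general bounds (Borel) give $\sum \dim H^i(M;\QQ) \ge \sum \dim H^i(M^T;\QQ)$ always, with the relevant refinement that the difference is even and detects odd cohomology. The cleanest route: freeness implies the spectral sequence degenerates (a non-degenerate spectral sequence would strictly decrease the rank over $\mathrm{Quot}(R)$, contradicting that the rank stays $|M^T|$), hence $H^*(M) \cong H_T^*(M)\otimes_R \ZZ$ is concentrated in the same parities as $H_T^*(M)$; combined with $H^{\mathrm{even}}(M;\QQ)$ having total dimension $|M^T|$ by localization and Poincaré-type counting, one concludes $H^{\mathrm{odd}}(M) = 0$.

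The main obstacle is the converse direction, specifically making the rank-counting rigorous over $\ZZ$ rather than $\QQ$: one must be careful that freeness of $H_T^*(M)$ as a $\ZZ[x_1,\ldots,x_k]$-module, combined with the localization theorem (which a priori works cleanly over $\QQ$ or after inverting suitable primes/characters), genuinely forces spectral sequence degeneration and hence the parity statement about $H^*(M;\ZZ)$. In fact the slick argument avoids integral localization subtleties: if $H^{\mathrm{odd}}(M) \neq 0$ then in the Serre spectral sequence some differential $d_r$ must be nonzero (otherwise $E_\infty = E_2$ has the fiber cohomology as a subquotient, but that is not itself an obstruction) — better to say: freeness of $H_T^*(M)$ over $R$ implies, by a standard argument (e.g.\ tensoring the spectral sequence with $\ZZ$ and tracking the filtration), that the spectral sequence collapses, whence $H^*(M) \cong R/(x_1,\dots,x_k) \otimes_R H_T^*(M)$ inherits the grading parities of $H_T^*(M)$, which by localization agrees in rank with $H^*(M^T)$ sitting in even degrees. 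I would cite that this is precisely \cite[Proposition 2.22]{MR4840842} and the references therein for the delicate integral bookkeeping, and otherwise present the parity/collapse argument as the conceptual core. Since the statement is quoted as a theorem from the literature, in the paper itself I would simply attribute it; the above is the proof sketch one would give if asked to reconstruct it.
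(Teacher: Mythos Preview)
The paper does not prove this theorem; it is simply quoted from \cite[Proposition 2.22]{MR4840842} and used without argument, exactly as you anticipate in your final sentence.

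Your accompanying sketch has the right overall shape but contains one genuine gap in (b)$\Rightarrow$(a): after the spectral sequence collapses, you assert that ``each graded piece is a free $R$-module,'' which is false whenever $H^{\mathrm{even}}(M;\ZZ)$ has torsion (then $R\otimes_\ZZ H^q(M)$ is not $R$-free), so the passage from collapse to freeness of $H_T^*(M)$ requires a different argument --- this is where the isolated-fixed-point hypothesis does real work beyond rank counting. For (a)$\Rightarrow$(b) the dimension-counting detour is unnecessary: freeness makes $H_T^*(M)$ torsion-free over $R$, so Borel localization gives an injection $H_T^*(M)\hookrightarrow H_T^*(M^T)$ into a module concentrated in even degree, whence $H_T^{\mathrm{odd}}(M)=0$; freeness then yields $H^*(M)\cong H_T^*(M)\otimes_R\ZZ$, giving $H^{\mathrm{odd}}(M)=0$. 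You flag these integral subtleties yourself, and deferring to the cited reference is indeed the correct move for this paper.
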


For an equivariantly formal action the
ordinary cohomology can be computed from its equivariant cohomology via the isomorphism $H^*(M)\cong H^*_T(M)/R^{>0}\cdot H^*_T(M)$ induced by the restriction map, see \cite[Theorem
1.1]{MR2308029}.

Furthermore, if the action is equivariantly formal, it follows, under certain assumptions, that the
equivariant cohomology is determined by its equivariant
$1$-skeleton. This is known as the \emph{Chang-Skjelbred Lemma}, see \cite{MR375357,
MR2308029}. For the purpose of this paper it is enough to know

\begin{prop}\label{P: injectivity equivariant cohomology}
Consider a GKM $T$-action on $M$.
Let $i \colon M^{T} \to M$ be the equivariant inclusion of the fixed point set. Then
\[
	i^{\ast} \colon H^{\ast}_{T}(M) \longrightarrow H^{\ast}_{T}(M^{T}) 
\]
is injective.
\end{prop}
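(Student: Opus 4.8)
The plan is to deduce injectivity of $i^*\colon H^*_T(M)\to H^*_T(M^T)$ from equivariant formality together with the localization theorem. First I would invoke Theorem \ref{T: equivariantly formal}: since a GKM action has isolated fixed points and satisfies $H^{\mathrm{odd}}(M)=0$ by condition (a) of Definition \ref{D: GKM}, the action is equivariantly formal, i.e.\ $H^*_T(M)$ is a free $R$-module. The key consequence of freeness is that $H^*_T(M)$ is in particular torsion-free as an $R$-module, so the localization map $H^*_T(M)\to H^*_T(M)\otimes_R S^{-1}R$ is injective, where $S=R\setminus\{0\}$ (or, more carefully, the multiplicative set generated by the finitely many weights appearing at the fixed points).

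Next I would use the Atiyah--Bott--Berline--Vergne localization theorem: for a torus action on a compact manifold with fixed point set $M^T$, the inclusion $i\colon M^T\to M$ induces an isomorphism $i^*\colon S^{-1}H^*_T(M)\to S^{-1}H^*_T(M^T)$ after inverting an appropriate finite set of characters. Here $M^T$ is a finite set of points, so $H^*_T(M^T)=\bigoplus_{p\in M^T} R$ is already free. The diagram
\[
\begin{tikzcd}
H^*_T(M)\arrow{r}{i^*}\arrow{d} & H^*_T(M^T)\arrow{d}\\
S^{-1}H^*_T(M)\arrow{r}{\cong} & S^{-1}H^*_T(M^T)
\end{tikzcd}
\]
commutes, the left vertical arrow is injective by torsion-freeness (equivariant formality), the bottom arrow is an isomorphism by localization, and the right vertical arrow is injective since $H^*_T(M^T)$ is free. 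A diagram chase then shows $i^*$ is injective.

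The main obstacle—or rather the point requiring the most care—is justifying the localization isomorphism $S^{-1}H^*_T(M)\xrightarrow{\cong} S^{-1}H^*_T(M^T)$ with the correct multiplicative set; this is classical (Atiyah--Bott, Quillen) but one should be mindful that over $\ZZ$ the relevant localization inverts the nonzero weights rather than all of $R\setminus\{0\}$, and that equivariant formality over $\ZZ$ is genuinely needed to rule out torsion in $H^*_T(M)$ that localization alone would not detect. Alternatively, one could cite the Chang--Skjelbred lemma directly (\cite{MR375357,MR2308029}), which for an equivariantly formal action gives the sharper statement that the image of $i^*$ equals $\bigcap_{\mathrm{codim}=1} \operatorname{im}(H^*_T(M_1^{(H)})\to H^*_T(M^T))$; injectivity of $i^*$ is then the first and easiest part of that statement. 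Given the paper's stated aims, I expect the authors simply to reference the localization/Chang--Skjelbred machinery rather than reprove it.
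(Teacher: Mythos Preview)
Your proposal is correct and matches the paper's approach: the paper invokes Borel localization (citing Allday--Puppe) to conclude that $\ker i^*$ consists of $R$-torsion, and then uses equivariant formality (freeness of $H^*_T(M)$ over $R$) to conclude that this torsion submodule is zero. Your commutative-square presentation is a slightly more explicit packaging of the same two ingredients.
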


\begin{proof}
By Borel localization \cite[Theorem (3.2.6)]{AlldayPuppe} we have that the kernel of $i^{\ast}$ consists
of $H^{\ast}(BT)$-torsion, but since the action is equivariantly formal, the equivariant
cohomology $H_{T}^{\ast}(M)$ is $H^{\ast}(BT)$-torsion free, since it is free as an
$H^{\ast}(BT)$-module.
\end{proof}

\subsection{Equivariant characteristic classes}\label{SubSec: Equivariant Char Classes}

Suppose $(M,T)$ is a GKM manifold and $\pi \colon V \to M$ a vector bundle over $M$
(either real or complex). We
say that $\pi \colon V \to M$ is a \emph{$T$-equivariant vector bundle} if $V$ is equipped with a $T$-action such that $\pi$ is a
$T$-equivariant map and the $T$-action on $V$ is fiberwise linear. In this case we may apply the Borel construction to this bundle and
obtain the vector bundle
\[
	V_{T}  =  ET \times_{T} V \longrightarrow ET \times_{T} M = M_{T}.
\]
\begin{defn}\label{D: Equivariant CC}
Let $\kappa$ be a characteristic class for real or complex vector bundles. Suppose $V
\to M$ is a $T$-equivariant vector bundle. The \emph{equivariant characteristic class $\kappa^{T}(V)$ of $V$} is defined as
\[
	\kappa^{T}(V) := \kappa(V_{T}) \in H_{T}^{\ast}(M).
\]
\end{defn}

The tangent bundle $TM \to M$ of $M$ is a $T$-equivariant bundle over $M$ by taking
differentials. Furthermore, the inclusion $i \colon M^{T} \to M$ induces an inclusion $i \colon
(M^{T})_{T} \to M_{T}$ and pulling back $TM_{T}$ by $i$ gives the bundle
$i^{\ast}((TM)_{T}) \to (M^{T})_{T}$. By the naturality property of characteristic classes and
Proposition \ref{P: injectivity equivariant cohomology} the equivariant characteristic
classes are uniquely determined by $i^{\ast}(\kappa^{T}(M))$.

Now choose an invariant complex structure on $T_pM$, where $p \in M^{T}$. Thus, $T_pM$ 
decomposes into a sum of weight spaces $T_pM = \oplus_{\alpha} V_{\alpha}$, which
implies that 
\[
V_{p}:=i^{\ast}((TM)_{T})|_{\{p\}_T} = \bigoplus_{\alpha} L_{\alpha},
\]
where $L_{\alpha} = ET \times_{T} V_{\alpha}$ is a complex line bundle over $BT = \{p\}_{T}$.
Now let $p$ denote the \emph{total Pontryagin class} of a real vector bundle and let $c$
denote the \emph{total Chern class} of a complex vector bundle. The Chern roots of $V_{p}$
are given by the weights $\alpha_{1}, \ldots, \alpha_{n} \in H^{2}(BT)$ (cf. Remark
\ref{rem: weights and R}) of the isotropy representation in $p \in M^{T}$, from which the Pontryagin classes $p_i^T(V_p)\in H_T^*(\{p\})\cong R$ can be
computed as

\begin{align*}
	p_{i}^{T}(V_{p}) &= (-1)^{i} c^{T}_{2i}(V_{p} \oplus \overline{V_{p}})\\
									 &= (-1)^{i}\sigma_{i}(-\alpha_{1}^{2},\ldots, -\alpha_{n}^{2} )\\
									 &= \sigma_{i}(\alpha_{1}^{2}, \ldots, \alpha_{n}^{2})
\end{align*}
where $\sigma_{i}$ is the $i$-th elementary polynomial and $\overline{V_{p}}$ denotes the
conjugate bundle to $V_{p}$. Note that the equivariant Pontryagin class does not depend
on the choice of the invariant complex structure on $TM_{p}$.
Finally, denoting by $1_p$ the element of $H_T^*(M^T)$ which is $1$ at $p$ and $0$ at all other fixed points, we obtain
\begin{prop}\label{P: Computation PC}
Let $(M,T)$ be a GKM manifold. The total equivariant Pontryagin class of $(M,T)$ regarded as an
element in $H^{\ast}_{T}(M^{T})$ is given by
\[
	p^{T}(M) = \sum_{p \in M^{T}} 1_p\cdot\prod_{\alpha}(1+\alpha^{2})
\]
where the product runs over all weights, which occur in the isotropy representation in $p
\in M^{T}$.
\end{prop}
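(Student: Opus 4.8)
The plan is to assemble the pieces that are already in place: the injectivity of Proposition \ref{P: injectivity equivariant cohomology}, the naturality of characteristic classes, and the fixed-point weight computation carried out in the paragraph preceding the statement. First I would observe that, writing $H^\ast_T(M^T)=\bigoplus_{p\in M^T}H^\ast_T(\{p\})=\bigoplus_p R$ with basis $\{1_p\}$, the element $p^T(M)\in H^\ast_T(M^T)$ in question is by definition $i^\ast$ applied to $p((TM)_T)\in H^\ast_T(M)$ (the injectivity of $i^\ast$ then guarantees that this genuinely determines the equivariant Pontryagin class of $(M,T)$, but for the formula as stated it is enough to compute the restriction). By naturality of $p$, the component of $i^\ast p^T(M)$ at a fixed point $p$ is $p(V_p)$, where $V_p=i^\ast((TM)_T)|_{\{p\}_T}$ is a real vector bundle over $BT=\{p\}_T$.

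Next I would compute $p(V_p)$. Fixing a $T$-invariant complex structure on $T_pM$ produces the weight decomposition $T_pM=\bigoplus_\alpha V_\alpha$, and hence $V_p\cong\bigoplus_\alpha L_\alpha$ as the realification of a sum of complex line bundles $L_\alpha=ET\times_T V_\alpha$, with $c_1(L_\alpha)=\alpha\in H^2(BT)$. Then $V_p\otimes_\RR\CC\cong\bigoplus_\alpha(L_\alpha\oplus\overline{L_\alpha})$ has Chern roots $\{\pm\alpha\}$, so
\[
p_i(V_p)=(-1)^i c_{2i}(V_p\otimes_\RR\CC)=(-1)^i\sigma_i(-\alpha_1^2,\ldots,-\alpha_n^2)=\sigma_i(\alpha_1^2,\ldots,\alpha_n^2),
\]
that is, $p(V_p)=\prod_\alpha(1+\alpha^2)$, the product over the weights at $p$. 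Summing over $p\in M^T$ in the basis $\{1_p\}$ yields the claimed formula.

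The only subtlety worth a remark is that Pontryagin classes of real bundles are in general multiplicative only modulo $2$-torsion; here this causes no trouble, since the computation takes place in $H^\ast_T(\{p\})\cong R=\ZZ[x_1,\ldots,x_k]$, which is torsion-free, and moreover $V_p$ is presented with an explicit splitting into line bundles, so the splitting-principle computation above is literally valid. I would also note, as in the text above, that the result is independent of the chosen invariant complex structure on $T_pM$: replacing some $V_\alpha$ by $\overline{V_\alpha}$ replaces $L_\alpha$ by its conjugate, changing $\alpha$ to $-\alpha$ and leaving $1+\alpha^2$ unchanged. I do not expect a genuine obstacle; this is a bookkeeping argument combining Proposition \ref{P: injectivity equivariant cohomology} with the weight analysis of the isotropy representation.
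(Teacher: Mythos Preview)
Your proposal is correct and follows essentially the same approach as the paper: the proposition in the paper is stated as the immediate conclusion of the preceding discussion, which computes $p_i^T(V_p)=\sigma_i(\alpha_1^2,\ldots,\alpha_n^2)$ via the weight decomposition and the identity $p_i(V_p)=(-1)^i c_{2i}(V_p\oplus\overline{V_p})$. Your write-up is slightly more explicit (noting the absence of $2$-torsion in $R$ and the independence of the complex structure), but the argument is the same.
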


\subsection{Linear sphere bundles}\label{SubSec: Sphere bundles}
Consider the isomorphism classes of oriented Euclidean vector bundles of rank
$k$ over $S^{m}$ and denote it by $\mathrm{V}_{+}(k,m)$. A \emph{linear sphere bundle}
over $S^{m}$ is the sphere bundle of an element of $\mathrm{V}_{+}(k,m)$. The orientation on the vector
bundle induces an orientation on the fiber spheres, determined by the outward-pointing
normal to the sphere. Thus, every linear sphere bundle is canonically oriented. We say two
linear sphere bundles are \emph{isomorphic} if their corresponding vector bundles
represent the same element in $\mathrm{V}_{+}(k,m)$.

For every map $\chi \colon S^{m-1} \to \mathrm{SO}(k)$ one may define the vector bundle
\begin{equation}\label{eq:clutchingbdl}
	E_{\chi} = (D^{m}_{+} \times \mathbb{R}^{k}) \cup_{\chi} (D^{m}_{-} \times \mathbb{R}^{k}) 
\end{equation}
where we identified $(q,v)$ with $(q,\chi(q)(v))$.
Clearly $E_{\chi} \in \mathrm{V}_{+}(k,m)$. Maps like $\chi$ are called \emph{clutching
maps}. From \cite[Proposition 1.14]{hatcherVBandKTheory} we have

\begin{prop}\label{P: homotopy and vb}
The map 
\[
	\pi_{m-1}(\mathrm{SO}(k)) \longrightarrow \mathrm{V}_{+}(k,m), \quad [\chi] \mapsto
	[E_{\chi}]
\]
is a bijection. Thus, linear sphere bundles over $S^{m}$ with fiber $S^{k}$ are
classified, up to isomorphism, by
$\pi_{m-1}(\mathrm{SO}(k))$.
\end{prop}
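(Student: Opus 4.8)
The plan is to run the standard clutching argument (as in \cite[\S 1.2]{hatcherVBandKTheory}), carried out throughout with the group $\GL_k^+(\RR)$ of orientation-preserving linear automorphisms of $\RR^k$, into which $\SO(k)$ includes as a deformation retract via Gram--Schmidt; thus $\pi_{m-1}(\SO(k)) = \pi_{m-1}(\GL_k^+(\RR))$, and since $\GL_k^+(\RR)$ is a connected topological group (so $\pi_1$ acts trivially on the higher homotopy groups) unbased and based homotopy classes of maps $S^{m-1}\to\SO(k)$ coincide and are both identified with $\pi_{m-1}(\SO(k))$, so basepoints cause no trouble. The first point is that $[\chi]\mapsto[E_\chi]$ is well defined: a homotopy $\chi_t$ of clutching maps produces, by the same formula as in \eqref{eq:clutchingbdl} with $\chi$ replaced by the homotopy, an oriented rank-$k$ bundle over $S^m\times[0,1]$, and a bundle over $X\times[0,1]$ restricts to isomorphic bundles over $X\times\{0\}$ and $X\times\{1\}$.

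For surjectivity, let $E\to S^m$ be an oriented rank-$k$ bundle. Since the closed hemispheres $D^m_\pm$ are contractible, $E|_{D^m_\pm}$ is trivial, so we may fix orientation-compatible trivializations $\phi_\pm\colon E|_{D^m_\pm}\xrightarrow{\cong}D^m_\pm\times\RR^k$. Over the equator $S^{m-1}=D^m_+\cap D^m_-$ the composite $\phi_+\circ\phi_-^{-1}$ is a map $\tau\colon S^{m-1}\to\GL_k^+(\RR)$, and by construction $E\cong E_\tau$. Deformation retracting $\GL_k^+(\RR)$ onto $\SO(k)$ yields a clutching map $\chi\colon S^{m-1}\to\SO(k)$ homotopic to $\tau$; the same $X\times[0,1]$ argument as above, applied to $\GL_k^+$-valued transition functions, shows $E_\tau\cong E_\chi$, hence $[\chi]\mapsto[E]$.

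For injectivity, suppose $h\colon E_{\chi_0}\xrightarrow{\cong}E_{\chi_1}$ is an isomorphism of oriented bundles covering $\mathrm{id}_{S^m}$. Using the tautological product trivializations of $E_{\chi_i}$ over each hemisphere, $h$ is described over $D^m_\pm$ by maps $h_\pm\colon D^m_\pm\to\GL_k^+(\RR)$, and compatibility with the clutching identifications forces $\chi_1(q)=h_+(q)\,\chi_0(q)\,h_-(q)^{-1}$ for all $q\in S^{m-1}$. Since $D^m_\pm$ is contractible, each $h_\pm$ is homotopic, through maps into $\GL_k^+(\RR)$, to the constant map at its value $A_\pm:=h_\pm(q_0)$ at a basepoint $q_0\in S^{m-1}$; hence $\chi_1\simeq A_+\chi_0 A_-^{-1}$ in $\GL_k^+(\RR)$. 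Finally, connectedness of $\GL_k^+(\RR)$ supplies paths from $A_\pm$ to the identity, so $A_+\chi_0 A_-^{-1}\simeq\chi_0$ and therefore $\chi_0\simeq\chi_1$; pushing this homotopy back into $\SO(k)$ via the deformation retraction gives $[\chi_0]=[\chi_1]$ in $\pi_{m-1}(\SO(k))$.

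The main obstacle is the bookkeeping in the injectivity step: the maps $h_\pm$ coming from an abstract bundle isomorphism a priori land only in $\GL_k^+(\RR)$, not in $\SO(k)$, so the homotopies must genuinely be constructed in $\GL_k^+(\RR)$ and only afterwards transported back to $\SO(k)$ using that the inclusion is a homotopy equivalence. The remaining ingredients---contractibility of the disks, connectedness of $\GL_k^+(\RR)$, and the restriction lemma for bundles over $X\times[0,1]$---are standard.
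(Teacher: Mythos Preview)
The paper does not supply its own proof of this proposition; it simply cites \cite[Proposition 1.14]{hatcherVBandKTheory}. Your argument is correct and is precisely the standard clutching proof one finds in that reference (carried out for $\GL_k^+(\RR)$ and then retracted to $\SO(k)$), so there is nothing substantive to compare.
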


Next, we would like to understand how Pontryagin classes of total spaces of sphere bundles over
spheres are related to its clutching maps. We use the idea of \cite{MR0082103} and
flesh out the details.

\begin{rem}\label{rem: first PC for sphere bundle}
Let $\pi \colon E \to S^{m}$ be a real vector bundle over a sphere $S^{m}$ of rank $k$. Denote by
$S^{k-1} \to M \stackrel{\pi}{\to} S^{m}$ the induced sphere bundle of $E \to S^{m}$. Then
we have the relation
\[
	p(M) = \pi^{\ast}(p(E)) \in \pi^{\ast}(H^{\ast}(S^{m}))
\]
where $p$ denotes the total Pontryagin class. For that note first that
\[
	TM \oplus \varepsilon^{1} \cong \pi^{\ast}(TS^{m}) \oplus \pi^{\ast}(E)
\]
where $\varepsilon^{r}$ denotes the trivial bundle of rank $r$. By the Whitney sum formula
for Pontryagin classes and the fact the $S^{m}$ has trivial stable tangent bundle, we see
that $p(S^{m})= 1$. Using again the Whitney sum
formula and the naturality of Pontryagin classes we obtain
\[
	p(M) = p(TM \oplus \varepsilon^{1}) = \pi^{\ast}(p(S^{m})) \smile \pi^{\ast}(p(E)) =
	\pi^{\ast}(p(E)).
\]
\end{rem}

Suppose $m \neq k$. For $\varphi \in \pi_{m-1} (\mathrm{SO}(k))$ denote by
$E_{\varphi}$ the associated vector bundle as in \eqref{eq:clutchingbdl},  by $M_{\varphi}$ the total space of its
sphere bundle, and by $\pi_{\varphi} \colon M_{\varphi} \to S^{m}$ the projection.

\begin{lem}\label{L: Milnor linearity}
For $m \equiv 0 \mod 4$ the map
\[
	\beta \colon \pi_{m-1}(\mathrm{SO}(k)) \longrightarrow H^{\ast}(S^{m}), \quad \chi \mapsto
	p_{\frac{m}{4}}(E_{\chi})
\]
is a homomorphism.
\end{lem}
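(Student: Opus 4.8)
The plan is to show that $\beta$ respects the group structure on $\pi_{m-1}(\mathrm{SO}(k))$, which is given by pointwise multiplication of maps $S^{m-1}\to\mathrm{SO}(k)$ (equivalently, by concatenation once one uses the $H$-space structure of the Lie group). Given $\chi,\psi\colon S^{m-1}\to\mathrm{SO}(k)$, I want to relate $E_{\chi\cdot\psi}$ to $E_\chi$ and $E_\psi$ in a way that is visible on the level of the Pontryagin class $p_{m/4}$. First I would recall, as in Remark \ref{rem: first PC for sphere bundle}, that all Pontryagin classes of $E_\chi$ above degree $0$ live in $H^{m}(S^m)\cong\ZZ$, so the only possibly nonzero one is $p_{m/4}(E_\chi)$; thus $\beta(\chi)$ is precisely the single ``characteristic number'' of the bundle $E_\chi$.

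The key step is a Whitney-sum type argument. Consider the bundle $E_\chi\oplus E_\psi$ over $S^m$; its clutching map is $q\mapsto \chi(q)\oplus\psi(q)\in\mathrm{SO}(k)\times\mathrm{SO}(k)\subset \mathrm{SO}(2k)$. On the other hand the block-diagonal inclusion $\mathrm{SO}(k)\hookrightarrow\mathrm{SO}(2k)$, $A\mapsto A\oplus A$ — or rather $A\mapsto (A\oplus I)$ composed with $(I\oplus A)$ — lets one compare $E_{\chi\cdot\psi}\oplus\varepsilon^k$ with $(E_\chi\oplus\varepsilon^k)$ and $(\varepsilon^k\oplus E_\psi)$: since multiplication in $\pi_{m-1}$ can be computed after stabilizing, the clutching map $\chi\cdot\psi$ (stabilized to $\mathrm{SO}(2k)$) is homotopic to the pointwise product of the two stabilized maps $q\mapsto\chi(q)\oplus I$ and $q\mapsto I\oplus\psi(q)$, which pointwise commute. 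A standard fact (cf.\ \cite{MR0082103}) is then that for clutching maps with values in two commuting subgroups whose product is again a clutching map, the resulting bundle is stably isomorphic to the direct sum of the two individual bundles; concretely $E_{\chi\cdot\psi}\oplus\varepsilon^{k}\cong \pi^*$-pullback combinatorics give $E_{\chi\cdot\psi}\oplus\varepsilon^{k}\cong_{\mathrm{stably}} E_\chi\oplus E_\psi$. Applying the Whitney sum formula for the total Pontryagin class, and using that $p(E_\chi)=1+p_{m/4}(E_\chi)$ with $p_{m/4}(E_\chi)\in H^m(S^m)$ (so that all cross terms $p_i(E_\chi)\smile p_j(E_\psi)$ with $i,j>0$ vanish for degree reasons, as $2m>m$), we get
\[
1+p_{m/4}(E_{\chi\cdot\psi}) = p(E_{\chi\cdot\psi}) = p(E_\chi)\smile p(E_\psi) = 1 + p_{m/4}(E_\chi) + p_{m/4}(E_\psi),
\]
hence $\beta(\chi\cdot\psi)=\beta(\chi)+\beta(\psi)$.

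The main obstacle I anticipate is justifying the stable isomorphism $E_{\chi\cdot\psi}\oplus\varepsilon^k\cong E_\chi\oplus E_\psi$ cleanly, i.e.\ making precise the passage from ``product of maps into $\mathrm{SO}(k)$'' to ``product of commuting stabilized maps into $\mathrm{SO}(2k)$'' and then to a direct-sum decomposition of the clutched bundle. This is where one uses that the group operation on $\pi_{m-1}(\mathrm{SO}(k))$ agrees with the one induced by concatenation of loops/spheres (Eckmann–Hilton), that homotopic clutching maps yield isomorphic bundles (Proposition \ref{P: homotopy and vb}), and an explicit inspection of the clutching description \eqref{eq:clutchingbdl}: a bundle with clutching map taking values in a product subgroup $G_1\times G_2$ splits as a direct sum of the two bundles clutched by the factor maps. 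Everything else — the degree-counting vanishing of cross terms, which is exactly where the hypothesis $m\equiv 0\bmod 4$ enters to guarantee $p_{m/4}$ is the only relevant class and lies in the top degree — is routine.
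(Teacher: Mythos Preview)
Your argument is correct, but the paper takes a different and somewhat more direct route. Instead of using the $H$-space structure on $\mathrm{SO}(k)$ and a Whitney-sum comparison, the paper exploits the co-$H$-space structure on $S^{m-1}$: it represents $\chi_1+\chi_2$ as the composite $S^{m-1}\xrightarrow{c}S^{m-1}\vee S^{m-1}\xrightarrow{\chi_1\vee\chi_2}\mathrm{SO}(k)$, where $c$ collapses the equator, then extends $c$ to the pinch map $S^m\to S^m\vee S^m$ and observes $E_{\chi_1+\chi_2}\cong c^*(E_{\chi_1\vee\chi_2})$ for a bundle over $S^m\vee S^m$ restricting to $E_{\chi_i}$ on the $i$th wedge summand. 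Since $c^*$ is the addition map on $H^m$, naturality of $p_{m/4}$ finishes immediately. This sidesteps both the stabilization step and the Whitney product formula (with its $2$-torsion caveat --- harmless over $S^m$, but an extra wrinkle nonetheless). Your approach, by contrast, yields the stronger geometric conclusion that $E_{\chi\psi}$ is \emph{stably isomorphic} to $E_\chi\oplus E_\psi$, at the cost of the justification you flag; that step does go through once you note that the two block inclusions $\mathrm{SO}(k)\hookrightarrow\mathrm{SO}(2k)$ are conjugate by an element of $\mathrm{SO}(2k)$ and hence homotopic.
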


\begin{proof}
Let $\chi_{1}, \chi_{2} \in \pi_{m-1}(\mathrm{SO}(k))$. Then $\chi_{1} +
\chi_2$ is represented by the concatenation of the maps
\[
	S^{m-1} \stackrel{c}{\longrightarrow} S^{m-1} \vee S^{m-1} \stackrel{\chi_{1} \vee \chi_{2}}{\longrightarrow} \mathrm{SO}(k) 
\]
(by abuse of notation we do not distinguish between the homotopy class $\chi_{i}$ and
its
representatives $\chi_{i} \colon S^{m-1} \to \mathrm{SO}(k)$), where $c$ is the
collapse map of the equator $S^{m-2} \subset S^{m-1}$.
Define by a clutching construction the vector bundle $E_{\chi_{1} \vee \chi_{2}}$
over $S^{m} \vee S^{m}$ by $\chi_{1} \vee \chi_{2}$ so that
\[
	E_{\chi_{1} + \chi_{2}} =c^{\ast}(E_{\chi_{1} \vee \chi_{2}}) .
\]
By using the inclusion $S^{m} \to S^{m} \vee S^{m}$ into each factor one sees that
\[
p_{r}(E_{\chi_{1} \vee \chi_{2}}) = (p_{r}(E_{\chi_{1}}),
p_{r}(E_{\chi_{2}})) \in H^{4r}(S^{m} \vee S^{m}) = H^{4r}(S^{m}) \oplus
H^{4r}(S^{m})
\] with $m = 4r$. Therefore, by naturality it follows 
\[
p_{r}(E_{\chi_{1} + \chi_{2}})
= c^{\ast}(p_{r}(E_{\chi_{1} \vee \chi_{2}})) = p_{r}(E_{\chi_{1}}) +
p_{r}(E_{\chi_{2}}).
\]
\end{proof}

We assume now that $k=5$ and $m=4$, i.e., consider linear $S^4$-bundles
over $S^4$. By Proposition \ref{P: homotopy and vb}, $V_+(5,4)$ is in one-to-one
correspondence to $\pi_{3}(\mathrm{SO}(5))$, which is isomorphic to $\mathbb{Z}$. Thus the map $\beta\colon \pi_3(\SO(5))\rightarrow H^4(S^4)$ is determined by its value on a generator, which we now compute. To this end we use

\begin{lem}\label{L: iso on pi_3}
Consider the embedding of $\mathrm{Sp}(1) \cong \mathrm{SU}(2) \subset \mathrm{U}(2)
\subset \mathrm{SO}(4) \subset \mathrm{SO}(5)$. This inclusion $\mathrm{Sp}(1)
\hookrightarrow \mathrm{SO}(5)$ induces an isomorphism between the third homotopy groups.
\end{lem}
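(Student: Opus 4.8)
The plan is to exploit the fact that the relevant homotopy groups are all isomorphic to $\ZZ$ and to pin down the induced maps by tracking a single generator. First I would recall the standard computation $\pi_3(\SO(3))\cong\ZZ$, $\pi_3(\SO(4))\cong\ZZ\oplus\ZZ$, $\pi_3(\SO(5))\cong\ZZ$, and $\pi_3(\Sp(1))=\pi_3(S^3)\cong\ZZ$. The chain of inclusions factors as $\Sp(1)\cong\SU(2)\hookrightarrow\U(2)\hookrightarrow\SO(4)\hookrightarrow\SO(5)$, so it suffices to understand each arrow on $\pi_3$ and compose.

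\textbf{Key steps.} The composite $\SU(2)\hookrightarrow\SO(4)$ is, up to the usual double cover $\spin(4)\cong\Sp(1)\times\Sp(1)\to\SO(4)$, the inclusion of one of the two factors (acting on $\HH\cong\RR^4$ by left multiplication). Using the standard basis of $\pi_3(\SO(4))\cong\ZZ\langle\sigma\rangle\oplus\ZZ\langle\rho\rangle$ where $\sigma$ is the image of a factor inclusion $\Sp(1)\to\SO(4)$ and $\rho$ is pulled back from $\pi_3(\SO(3))$ under $\SO(4)\to S^3$, the map $\pi_3(\Sp(1))\to\pi_3(\SO(4))$ sends a generator to $\sigma$ (or $\sigma$ shifted by a multiple of $\rho$ depending on conventions), hence is a split injection. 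Next, the stabilization map $\pi_3(\SO(4))\to\pi_3(\SO(5))\cong\ZZ$ is surjective with kernel generated by the class of the fiber $S^3$ of $\SO(5)\to S^4$; in the $(\sigma,\rho)$-coordinates this kernel is known to be generated by $2\sigma-\rho$ (this is precisely the classical computation underlying the tangent bundle of $S^4$, see Milnor), so that $\sigma\mapsto 1$ or at worst a unit in $\ZZ$. Composing, a generator of $\pi_3(\Sp(1))$ maps to a generator of $\pi_3(\SO(5))$, which is the claim.

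Alternatively — and this is the route I would actually prefer to write up cleanly — I would use the quaternionic Hopf bundle. The generator of $\pi_3(\SO(5))\cong\ZZ$ can be detected by the Euler class (or first Pontryagin class) of the associated rank-$5$, equivalently rank-$4$, bundle over $S^4$: the clutching map $S^3\to\SU(2)\subset\SO(4)\subset\SO(5)$ given literally by quaternion multiplication produces the vector bundle underlying the quaternionic Hopf bundle $\HH\to\HH P^1=S^4$, whose total space is $S^7$. This bundle has $p_1=\pm 2$ times a generator of $H^4(S^4)$ and Euler number $\pm 1$; since the Euler class of the bundle associated to $c\in\pi_3(\SO(4))$ depends linearly on $c$ (one of the two $\ZZ$-coordinates is exactly the Euler number), and the Hopf bundle realizes Euler number $1$, its clutching class is a generator of the relevant summand, and it survives to a generator under stabilization to $\SO(5)$ because the stabilization kills only the Euler-number-$\pm 2$, $p_1=0$ class of $TS^4$.

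\textbf{Main obstacle.} The delicate point is the bookkeeping of generators and signs in $\pi_3(\SO(4))\cong\ZZ^2$: one must fix a basis, correctly identify the image of $\pi_3(\Sp(1))$ and the kernel of stabilization in that basis, and verify these two lines together span a finite-index — in fact index-one in the appropriate quotient — picture so that the composite is genuinely surjective onto $\ZZ$ rather than multiplication by $2$ or $0$. I would handle this by anchoring everything to two computable invariants (Euler class and $p_1$) of explicit bundles — the Hopf bundle and $TS^4$ — rather than manipulating abstract generators, since those invariants are insensitive to the sign ambiguities that plague $\pi_3(\SO(4))$ and immediately give that the composite on $\pi_3$ is an isomorphism.
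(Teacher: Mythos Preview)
Your argument is correct and both routes you sketch are valid ways to prove the lemma, but neither is what the paper does. The paper lifts the whole chain to the spin covers: it identifies $\spin(5)\cong\Sp(2)$, checks that the inclusion $\Sp(1)\hookrightarrow\SO(5)$ lifts to the upper-left-block embedding $\Sp(1)\hookrightarrow\Sp(2)$, and then observes that $\Sp(2)/\Sp(1)\cong S^7$, so the long exact sequence of this fibration immediately gives an isomorphism on $\pi_3$. This sidesteps entirely the analysis of $\pi_3(\SO(4))\cong\ZZ^2$ and the identification of the stabilization kernel---precisely the ``delicate bookkeeping'' you flag as the main obstacle. What the paper buys is a clean, sign-free argument with no choice of basis in $\ZZ^2$; what your approach buys is that the characteristic-class computation you would do (Hopf bundle has $e=\pm 1$, $p_1=\pm 2$; $TS^4$ has $e=2$, $p_1=0$) is exactly the content of the paragraph \emph{following} the lemma in the paper, so your proof and the subsequent determination of $\beta(\chi_1)$ would merge into a single argument rather than two steps. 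One small caution: your description of $\rho$ as ``pulled back from $\pi_3(\SO(3))$ under $\SO(4)\to S^3$'' is garbled (you presumably mean the image of a generator under $\SO(3)\hookrightarrow\SO(4)$), and the specific formula $2\sigma-\rho$ for the kernel depends on conventions you have not pinned down---so if you write this up, anchor everything to the pair $(e,p_1)$ of explicit bundles as you suggest in your final paragraph, rather than to the abstract $(\sigma,\rho)$ basis.
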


\begin{proof}
Since the representation of $\mathrm{Sp}(1) \cong \mathrm{SU}(2)$ through $\mathrm{SO}(4)$ is the
multiplication of a unit quaternion from the left on $\mathbb H \cong \mathbb{R}^{4}$, we
see that the lift of $\mathrm{Sp(1)}$ into $\mathrm{Spin}(4) \cong \mathrm{Sp}(1) \times
\mathrm{Sp}(1)$ is given by $q \mapsto (q,1)$. Furthermore,
the spin representation of $\mathrm{Spin}(5) \cong \mathrm{Sp}(2)$ is the standard
representation of $\mathrm{Sp}(2)$ on $\mathbb H^{2}$. From \cite[Proposition 5.1]{MR1701598}
it follows that if the $\mathrm{Spin}(5) \cong \mathrm{Sp}(2)$ representation is restricted to
$\mathrm{Spin}(4)$, we obtain the embedding
\[
 \mathrm{Spin}(4)\cong \mathrm{Sp}(1) \times \mathrm{Sp}(1) \to \mathrm{Spin}(5) \cong
 \mathrm{Sp}(2), \quad (q,p) \mapsto 
 \begin{pmatrix}
q & 0 \\
0 & p
 \end{pmatrix}.
\]
In total, the embedding $\mathrm{Sp}(1) \to \mathrm{SO}(5)$ fits into the diagram
\[
	\begin{tikzcd}
	& \mathrm{Spin}(4) \arrow{r}\arrow{d} & \mathrm{Sp}(2)\arrow{d} \\
		\mathrm{Sp}(1)\arrow{ru}\arrow{r} & \mathrm{SO}(4) \arrow{r} & \mathrm{SO}(5)
	\end{tikzcd} 
\]
where the upper arrows are the maps
\[
	q \mapsto (q,1) \mapsto 
	\begin{pmatrix}
q & 0  \\
0 & 1
	\end{pmatrix}.
\]
The quotient $\mathrm{Sp}(2)/\mathrm{Sp(1)}$ is diffeomorphic to $S^{7}$ and therefore by the long
exact sequence for homotopy groups of the fibration $\mathrm{Sp}(1) \to \mathrm{Sp}(2) \to
S^{7}$ the embedding $\mathrm{Sp}(1) \hookrightarrow \mathrm{Sp}(2)$ induces an
isomorphism on $\pi_{3}$. Since the projection $\mathrm{Sp}(2) \to \mathrm{SO}(5)$ is also
an isomorphism on $\pi_{3}$ the claim follows.
\end{proof}

Let $\chi_1$ denote the embedding $S^3\cong
	\mathrm{Sp}(1)\rightarrow\SO(5)$ which yields a generator of $\pi_3(\SO(5))$ by Lemma
	\ref{L: iso on pi_3}. Since $\chi_1$ takes values in $\SO(4)\subset \SO(5)$ the bundle
	$E_{\chi_1}$ is given as the Whitney sum of a rank $4$ bundle with a trivial rank $1$
	bundle. Hence, by the Whitney sum formula $E_{\chi_{1}}$ has the same
	Pontryagin classes as the rank $4$ bundle. This  rank $4$ bundle with clutching function the identification
	$S^3\cong \mathrm{Sp}(1)$ is the tautological bundle $H \to \mathbb{HP}^{1} \cong
S^{4}$. The sphere bundle of $H \to S^{4}$ is diffeomorphic to
$S^{7}$ and by the Gysin sequence we see that the Euler class of $H$ is a generator of
$H^{4}(S^{4})$, which we denote by $\iota$. From \cite[Lemma 20.9]{MR0440554} we have
$p_1(H)= - 2 \cdot \iota$.
Hence $\beta(\chi_1)=- 2\cdot\iota$. Now by Lemma \ref{L: Milnor linearity} and Remark
\ref{rem: first PC for sphere bundle} we obtain

\begin{cor}\label{C: clutching function pontryagin}
For any $l\in \ZZ$, the Pontryagin class $p_{1}(M_{l\cdot \chi_1})$ is equal to
$- 2l \cdot \pi^{\ast}(\iota)$, where $\pi:M_{l\cdot \chi_1}\to S^4$ is the projection.
\end{cor}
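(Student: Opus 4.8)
The plan is to combine the three ingredients already assembled in the excerpt. By Lemma \ref{L: Milnor linearity}, applied with $m=4$ and $k=5$, the map
\[
\beta\colon \pi_3(\SO(5))\longrightarrow H^4(S^4),\qquad \chi\mapsto p_1(E_\chi)
\]
is a group homomorphism. Since $\chi_1$ generates $\pi_3(\SO(5))\cong\ZZ$, every element is of the form $l\cdot\chi_1$ for a unique $l\in\ZZ$, and additivity of $\beta$ gives
\[
p_1(E_{l\cdot\chi_1})=\beta(l\cdot\chi_1)=l\cdot\beta(\chi_1).
\]
The preceding paragraph of the excerpt computes $\beta(\chi_1)=p_1(E_{\chi_1})=-2\cdot\iota$, using that $E_{\chi_1}$ is stably the tautological quaternionic line bundle $H\to S^4$ and the identity $p_1(H)=-2\iota$ from \cite[Lemma 20.9]{MR0440554}. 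Hence $p_1(E_{l\cdot\chi_1})=-2l\cdot\iota$.

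It then remains to pass from the vector bundle $E_{l\cdot\chi_1}$ to the sphere bundle $M_{l\cdot\chi_1}$. This is exactly the content of Remark \ref{rem: first PC for sphere bundle}: for the sphere bundle $\pi\colon M\to S^m$ of a vector bundle $E\to S^m$ one has $p(M)=\pi^\ast(p(E))$, because $TM\oplus\varepsilon^1\cong\pi^\ast(TS^m)\oplus\pi^\ast(E)$ and $S^m$ has trivial stable tangent bundle. Applying this with $E=E_{l\cdot\chi_1}$ and extracting the degree-$4$ part yields
\[
p_1(M_{l\cdot\chi_1})=\pi^\ast\bigl(p_1(E_{l\cdot\chi_1})\bigr)=-2l\cdot\pi^\ast(\iota),
\]
which is the claim.

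Structurally there is no real obstacle here — the corollary is a formal consequence of results already proven in the excerpt, so the only care needed is bookkeeping: making sure the homomorphism property of $\beta$ is invoked for the correct $m$ (namely $m\equiv 0\bmod 4$, here $m=4$), that the generator $\chi_1$ is the one whose $\beta$-value was just computed, and that the pullback $\pi^\ast$ is kept in the final formula since $p_1(M_{l\cdot\chi_1})$ lives in $H^4(M_{l\cdot\chi_1})$, not in $H^4(S^4)$. The step that carries the actual geometric content — and hence the one I would flag as the substantive input rather than the obstacle — is the identification $\beta(\chi_1)=-2\iota$, which rests on Lemma \ref{L: iso on pi_3} (that $\Sp(1)\hookrightarrow\SO(5)$ is an isomorphism on $\pi_3$) together with the classical computation of $p_1$ of the quaternionic Hopf bundle; both are already in place, so the proof of the corollary itself is short.
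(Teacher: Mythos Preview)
Your proof is correct and follows exactly the approach of the paper, which simply records the corollary as an immediate consequence of Lemma \ref{L: Milnor linearity}, the computation $\beta(\chi_1)=-2\iota$ in the preceding paragraph, and Remark \ref{rem: first PC for sphere bundle}. You have merely spelled out the one-line deduction in more detail.
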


\subsection{Cohomology rings of linear $S^{4}$-bundles over $S^{4}$}

 As above $H \to S^{4}$ denotes the tautological bundle over $\mathbb H\mathbb P^{1} \cong
 S^{4}$. For $l \in \mathbb{Z}$ let $f_{l} \colon S^{4} \to S^{4}$ be a map of
 degree $l$. Set $H_{l} :=f^{\ast}_{l}(H)$ and $E_{l} := H_{l} \oplus \varepsilon^{1}$
 (where $\varepsilon^{m}$ is the trivial rank $m$ bundle over $S^{4}$). We endow $H_{l}$
 with an inner product and we choose the product metric on $E_{l}$.
\begin{lem}\label{L: identification with k-tautological bundles}
We have $E_{l} \cong E_{l \cdot \chi_{1}}$ for all $l \in \mathbb{Z}$.
\end{lem}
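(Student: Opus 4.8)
The plan is to exploit the compatibility of the clutching construction with pullbacks along self-maps of $S^4$ that preserve the hemisphere decomposition $S^4=D^4_+\cup_{S^3}D^4_-$, so that the whole statement reduces to the homomorphism property of the map induced by $\chi_1$ on $\pi_3$.

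First I would reduce the statement to a single pullback computation. By the discussion preceding Corollary \ref{C: clutching function pontryagin} we have $E_{\chi_1}\cong H\oplus\varepsilon^1$; since pullback commutes with Whitney sums and sends trivial bundles to trivial bundles,
\[
E_l = H_l\oplus\varepsilon^1 = f_l^\ast(H)\oplus\varepsilon^1 \cong f_l^\ast(H\oplus\varepsilon^1)\cong f_l^\ast(E_{\chi_1}).
\]
Homotopic maps have isomorphic pullback bundles, and $[S^4,S^4]$ is detected by the degree, so $f_l^\ast(E_{\chi_1})$ depends only on $l$ and I am free to replace $f_l$ by any convenient degree-$l$ map.

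Next I would take $f_l$ to be the (unreduced) suspension $\Sigma g_l$ of a degree-$l$ self-map $g_l\colon S^3\to S^3$. Writing $S^4=\Sigma S^3$ so that $D^4_+$ (resp.\ $D^4_-$) is the image of $S^3\times[0,1]$ (resp.\ $S^3\times[-1,0]$) and the equator is $S^3\times\{0\}$, the map $\Sigma g_l$ sends $D^4_\pm$ into $D^4_\pm$ and restricts to $g_l$ on the equator. For any clutching map $\chi\colon S^3\to\SO(k)$, pulling back $E_\chi=(D^4_+\times\RR^k)\cup_\chi(D^4_-\times\RR^k)$ along such a hemisphere-preserving map yields trivializations over both hemispheres (pull back the tautological ones via $f_l|_{D^4_\pm}$), and over the equator the gluing datum becomes $q\mapsto\chi(g_l(q))$; hence $f_l^\ast(E_\chi)\cong E_{\chi\circ g_l}$. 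This short bookkeeping is the only genuinely computational point, and fixing the hemisphere-preserving model for $f_l$ is what makes it clean; conceptually there is nothing deep here.

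Finally I would identify the class $[\chi_1\circ g_l]\in\pi_3(\SO(5))$. The map induced by $\chi_1$ on third homotopy groups, $(\chi_1)_\ast\colon\pi_3(S^3)\to\pi_3(\SO(5))$, $[\alpha]\mapsto[\chi_1\circ\alpha]$, is a group homomorphism, $[\chi_1]=(\chi_1)_\ast([\mathrm{id}_{S^3}])$, and $[g_l]=l\cdot[\mathrm{id}_{S^3}]$ in $\pi_3(S^3)\cong\ZZ$; therefore
\[
[\chi_1\circ g_l] = (\chi_1)_\ast([g_l]) = l\cdot(\chi_1)_\ast([\mathrm{id}_{S^3}]) = l\cdot[\chi_1].
\]
(Here free and based homotopy classes agree both for maps into $S^3$, which is simply connected, and for maps into the topological group $\SO(5)$, whose $\pi_1$ acts trivially on $\pi_3$.) Since by Proposition \ref{P: homotopy and vb} the isomorphism class of $E_\chi$ depends only on $[\chi]$, we conclude $f_l^\ast(E_{\chi_1})\cong E_{\chi_1\circ g_l}\cong E_{l\cdot\chi_1}$, and combining with the first step, $E_l\cong E_{l\cdot\chi_1}$. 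The main obstacle, such as it is, is the careful verification of the pullback-of-clutching formula; the rest is formal.
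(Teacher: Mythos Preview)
Your proof is correct and takes a genuinely different route from the paper's. The paper argues via Pontryagin classes: it computes $p_1(E_l)=p_1(H_l)=f_l^*(p_1(H))=-2l\,\iota$ by naturality, and then invokes the earlier machinery (Lemma~\ref{L: Milnor linearity} together with the value $\beta(\chi_1)=-2\iota$) to conclude that the clutching class must be $l\cdot\chi_1$, using that $\beta\colon\pi_3(\SO(5))\to H^4(S^4)$ is an injective homomorphism on an infinite cyclic group. You instead work directly at the level of clutching functions: choosing $f_l$ as a suspension lets you read off the clutching map of the pullback as $\chi_1\circ g_l$, and then the identification $[\chi_1\circ g_l]=l\cdot[\chi_1]$ is just the homomorphism property of $(\chi_1)_*$ on $\pi_3$. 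Your approach is more elementary and self-contained---it makes no use of characteristic classes and does not depend on Lemma~\ref{L: Milnor linearity} or Corollary~\ref{C: clutching function pontryagin}---whereas the paper's argument is a one-line application of infrastructure that is already in place and is needed later anyway.
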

\begin{proof}
Using the naturality of Pontryagin classes we have
\[
	p_{1}(E_{l}) = p_{1}(H_{l}) = f^{\ast}_{l}(p_{1}(H)) = - 2l \cdot \pi^{\ast}(\iota).
\]
Thus, with Corollary \ref{C: clutching function pontryagin}, the clutching function of
$E_{l}$ is up to homotopy given by $l \cdot \chi_{1}$.
\end{proof}

For a Euclidean vector bundle $E$ denote by $S(E)$ its sphere bundle. Then,
\[
	S(E_{l}) = \{(v,t) \in H_{l} \oplus \varepsilon^{1} : |v|^{2} + t^{2} = 1\}.
\]
and 
\[
	S(E_{l}) = D_{+}(H_{l}) \cup_{S(H_{l})} D_{-}(H_{l})
\]
where 
\[
D_{\pm}(H_{l}) = \{ (v,t) \in H_{l} \oplus \varepsilon^{1} : |v|^{2} +t^{2} = 1,\, \pm t
\geq 0\} .
\]
We abbreviate $S_{l} = S(E_{l})$ and $D_{\pm, l} = D_{\pm }(H_{l})$.
Note that $D_{\pm,l}$ 
	is isomorphic to the disc bundle of $H_{l}$. Consider the pair
	$(S_{l}, D_{-,l})$. The quotient $S_{l}/D_{-,l}$ is the Thom space $\mathrm{Th}(H_{l})$ of
	$H_{l}$. The long exact sequence of the pair above then yields
	\[
		\ldots \longrightarrow H^{i-1}(D_{+,l}) \stackrel{\delta}{\longrightarrow}
		\widetilde{H}^{i}(\mathrm{Th}(H_{l})) \longrightarrow H^{i}(S_{l}) \longrightarrow
		H^{i}(D_{+,l}) \longrightarrow \ldots,
	\]
where $\delta$ is the
connecting homomorphism. Clearly $D_{\pm,l}$ is homotopy equivalent to $S^{4}$. The Thom isomorphism gives that the reduced cohomology of
$\mathrm{Th}(H_{l})$ is zero except for $i=4$ and $i=8$. In particular, we have the short
exact sequence
\[
	0 \longrightarrow \widetilde{H}^{4}(\mathrm{Th}(H_{l})) \longrightarrow H^{4}(S_{l})
	\longrightarrow H^{4}(S^{4}) \longrightarrow 0.
\]
The group $\widetilde{H}^{4}(\mathrm{Th}(H_l))$ is generated by the Thom class $U_{H_{l}}$
and therefore $H^{4}(S_{l}) \cong \mathbb{Z} \oplus \mathbb{Z}$, where one summand is
generated by $x:=U_{H_{l}}$ and the other by $y:=\pi^{\ast}(\iota)$.

\begin{prop}\label{P: cup products on H^4}
In $H^{\ast}(S_{l})$ we have
\[
	x^{2} = l \cdot x \smile y \quad \text{and} \quad y^{2} = 0. 
\]
Thus, the ring structure is given by
\[
	\mathbb{Z}[x,y] /( x^{2}-l (x \smile y), y^{2} )
\]

\end{prop}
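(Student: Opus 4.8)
The relation $y^2=0$ is immediate: since $H^8(S^4)=0$ we have $\iota^2=0$ in $H^*(S^4)$, and therefore $y^2=\pi^*(\iota)^2=\pi^*(\iota^2)=0$. So the content is to identify $x^2$ and $x\smile y$ in $H^8(S_l)$. Recall from the discussion preceding the proposition that $x$ is by definition the image of the Thom class $U_{H_l}\in\widetilde H^4(\mathrm{Th}(H_l))=H^4(S_l,D_{-,l})$ under the map $j\colon H^*(S_l,D_{-,l})\to H^*(S_l)$ which forgets the relative structure, and that $j$ is an isomorphism in degree $8$ (from the long exact sequence of the pair $(S_l,D_{-,l})$, using that $D_{-,l}$ is homotopy equivalent to $S^4$, so that $H^7(D_{-,l})=H^8(D_{-,l})=0$). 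The plan is to compute the two products inside $\widetilde H^8(\mathrm{Th}(H_l))=H^8(S_l,D_{-,l})$ and then push forward along $j$.

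First, $H^*(S_l,D_{-,l})$ is a module over $H^*(S_l)$ via the relative cup product, and $j$ is a homomorphism of such modules; hence $x\smile y=j(U_{H_l}\smile y)$, and likewise $x^2=j(U_{H_l}\smile U_{H_l})$ where on the right the product is taken relative to $D_{-,l}$ (one uses here only the standard naturality of cup products with respect to forgetting relative structure). Next I would invoke the excision isomorphism $H^*(S_l,D_{-,l})\xrightarrow{\ \cong\ }H^*(D_{+,l},S(H_l))=H^*(D(H_l),S(H_l))$, which is compatible with all the relevant cup product pairings; it carries $U_{H_l}$ to the honest Thom class of $H_l\to S^4$ (again denoted $U_{H_l}$) and carries $y$ to $\pi_{H_l}^*(\iota)$, because the projection $p\colon S_l\to S^4$ restricted to $D_{+,l}$ coincides with $\pi_{H_l}\colon D(H_l)\to S^4$ under the canonical identification $D_{+,l}\cong D(H_l)$, $(v,t)\mapsto v$. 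It therefore suffices to evaluate $U_{H_l}\smile\pi_{H_l}^*(\iota)$ and $U_{H_l}\smile U_{H_l}$ in $H^8(D(H_l),S(H_l))$.

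Here I would use two standard facts about Thom classes. The Thom isomorphism $H^k(S^4)\xrightarrow{\ \cong\ }\widetilde H^{k+4}(\mathrm{Th}(H_l))$, $a\mapsto\pi_{H_l}^*(a)\smile U_{H_l}$, identifies $U_{H_l}\smile\pi_{H_l}^*(\iota)$ with the generator $\iota\in H^4(S^4)$; in particular $x\smile y$ generates $H^8(S_l)\cong\ZZ$. Second, the square of the Thom class satisfies $U_{H_l}\smile U_{H_l}=\pi_{H_l}^*(e(H_l))\smile U_{H_l}$, since the image of $U_{H_l}$ under $H^4(D(H_l),S(H_l))\to H^4(D(H_l))$ equals $\pi_{H_l}^*(e(H_l))$ (the definition of the Euler class, together with the fact that $\pi_{H_l}$ is a homotopy equivalence). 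By naturality of the Euler class under pullback, $e(H_l)=f_l^*(e(H))=f_l^*(\iota)=l\,\iota$, because $H_l=f_l^*(H)$, the Euler class of $H$ is the chosen generator $\iota$ of $H^4(S^4)$, and $f_l$ has degree $l$. Combining the two facts gives $U_{H_l}\smile U_{H_l}=l\cdot\bigl(\pi_{H_l}^*(\iota)\smile U_{H_l}\bigr)$ in $H^8(D(H_l),S(H_l))$, and applying $j$ yields $x^2=l\cdot(x\smile y)$ in $H^8(S_l)$.

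Finally, since $x,y$ generate $H^4(S_l)$, since $x\smile y$ generates $H^8(S_l)$, and since $H^i(S_l)=0$ for $i\notin\{0,4,8\}$, the classes $1,x,y,x\smile y$ form a $\ZZ$-basis of $H^*(S_l)$, and the relations $x^2=l(x\smile y)$ and $y^2=0$ determine all products. One checks directly that $\ZZ[x,y]/(x^2-l(x\smile y),y^2)$ has exactly this underlying graded abelian group, which gives the asserted ring isomorphism. I expect the only point genuinely needing care to be the bookkeeping that the various relative cup products are compatible with the excision identification $H^*(S_l,D_{-,l})\cong H^*(D(H_l),S(H_l))$ and with the restriction $y\mapsto\pi_{H_l}^*(\iota)$; once this is set up, the computation reduces entirely to the two standard Thom-class identities above. (Alternatively one could first establish via Leray–Hirsch that $H^*(S_l)$ is a free $H^*(S^4)$-module on $1$ and $x$, which immediately gives $y^2=0$ and that $x\smile y$ generates $H^8$, and then only the coefficient in $x^2=c\,(x\smile y)$ has to be computed — but this last step is again the Euler-class computation above.)
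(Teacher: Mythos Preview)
Your argument is correct and is essentially the same as the paper's: both reduce to the identity $U_{H_l}\smile U_{H_l}=\pi^*(e(H_l))\smile U_{H_l}$ (which the paper simply cites from Milnor--Stasheff) together with $e(H_l)=l\,\iota$. The only cosmetic difference is that the paper invokes Poincar\'e duality to see that $x\smile y$ generates $H^8(S_l)$, whereas you obtain this via the Thom isomorphism; otherwise your proof is a more detailed unpacking of the same route.
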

\begin{proof}
	The relation $y^{2} =0$ is obvious and by Poincaré duality the class $x \smile y$ is a
	generator of $H^{8}(S_{l})$. It is known \cite[p.\ 99]{MR0440554} that 
	\[
		x^{2} = \pi^{\ast}(e(H_{l})) \smile x.
	\]
Furthermore $\pi^{\ast}(e(H_{l})) = l \cdot \pi^{\ast}(\iota) = l \cdot y$.	
\end{proof}

\begin{cor}\label{T: cohomology rings of S_l}
The cohomology ring of $S_{l}$ is given by
\[
	\mathbb{Z}[x,y]/ ( x^{2}, y^{2} )
\]
if $l$ is even and
\[
	\mathbb{Z}[x,y]/ ( x^{2} - x \smile y, y^{2} )
\]
if $l$ is odd.
\end{cor}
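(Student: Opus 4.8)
The plan is to deduce the corollary from Proposition \ref{P: cup products on H^4} by a change of generators in the two cases of opposite parity. We already know that $H^*(S_l)$ is, as a graded group, $\mathbb{Z}$ in degrees $0$ and $8$ and $\mathbb{Z}\oplus\mathbb{Z}$ in degree $4$, and that the ring structure is presented by $\mathbb{Z}[x,y]/(x^2 - l\,x\smile y,\ y^2)$ with $x = U_{H_l}$, $y = \pi^*(\iota)$, and $x\smile y$ a generator of $H^8$. So the only task is to recognize this ring up to isomorphism depending on $l \bmod 2$.

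First I would treat $l$ even, say $l = 2k$. Replace $x$ by $x' := x - k y$. Then $y$ is unchanged, $x'\smile y = x\smile y$ still generates $H^8$, and $(x')^2 = x^2 - 2k\, x\smile y + k^2 y^2 = (l - 2k)\,x\smile y = 0$ using $y^2 = 0$ and $l = 2k$. Since $\{x', y\}$ is still a $\mathbb{Z}$-basis of $H^4(S_l)$ (the change of basis matrix has determinant $1$) and $x'\smile y$ generates $H^8$, the assignment $x'\mapsto x$, $y\mapsto y$ gives a ring isomorphism onto $\mathbb{Z}[x,y]/(x^2, y^2)$; one checks both sides have the same additive structure in each degree, so there are no further relations.

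For $l$ odd I would argue that no such substitution is possible and that $\mathbb{Z}[x,y]/(x^2 - x\smile y, y^2)$ is already the answer, i.e.\ that the presentation in Proposition \ref{P: cup products on H^4} for a given odd $l$ is isomorphic to the one for $l = 1$. Write $l = 2k+1$ and again set $x' := x - k y$; then $(x')^2 = (l - 2k)\, x\smile y = x\smile y = x'\smile y$ (the last equality because $x'\smile y = x\smile y$), so in the new basis the relation becomes exactly $x'^2 = x'\smile y$, giving the claimed ring $\mathbb{Z}[x,y]/(x^2 - x\smile y, y^2)$.

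The main point to be careful about — rather than a genuine obstacle — is to confirm that the displayed quotient rings have the correct underlying graded abelian group, so that passing from the relations among the specific generators $x,y$ to the abstract polynomial quotient does not introduce or omit elements: in each case the quotient $\mathbb{Z}[x,y]/(\dots)$ is free of rank $1,0,2,0,1,\dots$ in degrees $0,2,4,6,8,\dots$, matching $H^*(S_l)$, and the top product $x\smile y$ is a generator, consistent with Poincaré duality. One should also note that these two rings are genuinely non-isomorphic (the even one has a nonzero element of degree $4$ squaring to zero and spanning a primitive sublattice complementary to such elements, whereas in the odd case every primitive degree-$4$ class has nonzero square after suitable sign — a standard discriminant-type argument), which is what makes the case distinction meaningful; but that remark is not needed for the statement itself.
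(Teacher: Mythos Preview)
Your proof is correct and follows essentially the same approach as the paper: both arguments substitute $x' = x - ky$ (the paper writes $\bar x = x + ky$) and observe that $(x')^2 = (l - 2k)\,x'\smile y$, so $l$ can be reduced modulo $2$. Your additional remarks about matching the graded abelian group and the non-isomorphism of the two rings are harmless elaborations that the paper omits.
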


\begin{proof}
For $\bar x = x + ky$ we see that $\bar x^{2} =(l+2k)x\smile y$, where $k \in
\mathbb{Z}$. Hence, we may add any even number to $l$ and obtain isomorphic rings.
\end{proof}

\section{Combinatorial classification}\label{sec:combinatorial}

In this section we classify possible GKM graphs $(\Gamma,\alpha)$ of GKM manifolds $M$ such that there is a $T^2$-equivariant fiber bundle $M\rightarrow S^4$ with fibers diffeomorphic to $S^4$, where the action on the basis $S^4$ is the standard GKM action. In order to distinguish the cases in the classification we make a certain choice of connection. The latter is not always canonical and as a result some overlap in the cases arises when not fixing the connection. We make this precise in Remark \ref{rem: overlap}.

As proved in \cite[Proposition 3.7]{MR4634089} $(\Gamma,\alpha)$ is a GKM fibration (see Definition \ref{defn:GKMfibration}) over the GKM graph of the base $S^4$. From the definition of a GKM fibration it follows that for suitable $a,b,\ldots,h$ the GKM graph $(\Gamma,\alpha)$ is isomorphic to

\begin{center}
		\begin{tikzpicture}

			\node (a) at (0,0)[circle,fill,inner sep=2pt] {};
			\node (b) at (6,0)[circle,fill,inner sep=2pt]{};
			\node (c) at (0,4)[circle,fill,inner sep=2pt]{};	
			\node (d) at (6,4)[circle,fill,inner sep=2pt]{};			
			
			\node at (-1,2) {$(a,b)$};
			\node at (+1,2) {$(c,d)$};
			\node at (5,2) {$(e,f)$};
			\node at (7,2) {$(g,h)$};
			\node at (3,5) {$(1,0)$};
			\node at (3,3) {$(0,1)$};
			\node at (3,1) {$(1,0)$};
			\node at (3,-1) {$(0,1)$};

			\draw [very thick](a) to[in=160, out=20] (b);
			\draw [very thick](a) to[in=200, out=-20] (b);
			\draw [very thick](c) to[in=160, out=20] (d);
			\draw [very thick](c) to[in=200, out=-20] (d);
			\draw [very thick](a) to[in=250, out=110] (c);
			\draw [very thick](a) to[in=290, out=70] (c);
			\draw [very thick](b) to[in=250, out=110] (d);
			\draw [very thick](b) to[in=290, out=70] (d);
			
		\end{tikzpicture}
	\end{center}
which we call \emph{product type} or
\begin{center}
		\begin{tikzpicture}

			\node (a) at (0,0)[circle,fill,inner sep=2pt] {};
			\node (b) at (6,0)[circle,fill,inner sep=2pt]{};
			\node (c) at (0,4)[circle,fill,inner sep=2pt]{};	
			\node (d) at (6,4)[circle,fill,inner sep=2pt]{};			
			
			\node at (-1,2) {$(a,b)$};
			\node at (+1,2) {$(c,d)$};
			\node at (5,2) {$(e,f)$};
			\node at (7.2,2) {$(g,h)$};
			\node at (3,4.4) {$(1,0)$};
			\node at (2.6,3) {$(0,1)$};
			\node at (3.4,1) {$(1,0)$};
			\node at (3,-.4) {$(0,1)$};

			\draw [very thick](a) to (b);
			\draw [very thick](a) to (d);
			\draw [very thick](c) to (d);
			\draw [very thick](c) to (b);
			\draw [very thick](a) to[in=250, out=110] (c);
			\draw [very thick](a) to[in=290, out=70] (c);
			\draw [very thick](b) to[in=250, out=110] (d);
			\draw [very thick](b) to[in=290, out=70] (d);
			
		\end{tikzpicture}
	\end{center}
which we call \emph{twisted type}. In our enumeration of the possible cases we will indicate this by the respective letter {\bf P} or {\bf T}. As we consider unsigned graphs (see Remark \ref{rem:signedunsigned}), labels are considered up to sign. The vertical two edge graphs correspond to the fibers over the fixed points. We call those edges vertical, the others horizontal.

\begin{lem}\label{lem: goodcon}
One can choose a connection $\nabla$ on $\Gamma$ as in Definition \ref{defn:GKMfibration} satisfying additionally that it is invariant under graph automorphism that swaps the vertices in each fiber while preserving the labels on edges.
\end{lem}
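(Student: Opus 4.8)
The plan is to build the desired connection $\nabla$ by hand on the small graph $\Gamma$, exploiting the fact that $\Gamma$ has only four vertices and that there is essentially very little choice involved once we impose compatibility with the fibration structure and the symmetry. First I would describe the relevant graph automorphism $\sigma$ precisely: it is the involution of $\Gamma$ that fixes each of the four fiber-columns setwise, swapping the two vertices in each fiber (equivalently, $\sigma$ swaps the two vertical edges in each fiber and fixes the set of horizontal edges, acting on horizontal edges in the obvious way), and which by hypothesis preserves the axial function $\alpha$. The claim is that among all connections satisfying (a)--(c) of Definition \ref{defn:GKMfibration} one can be chosen with $\sigma_*\nabla = \nabla$, i.e.\ $\nabla_{\sigma(e)} = \sigma\circ\nabla_e\circ\sigma^{-1}$ for all $e$.

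The key steps are as follows. First, treat the connection along the vertical edges: at each vertex $v$ the two edges of $E_v$ are one vertical edge $e_v$ and (in product type) two horizontal edges, so along a vertical edge $e$ the connection $\nabla_e$ is forced to send $e\mapsto\bar e$ and must permute the horizontal edges at $i(e)$ to those at $t(e)$; compatibility with $\nabla_B$ (condition (c)) and the fact that the base is a biangle with distinct labels pins down $\nabla_e$ on horizontal edges uniquely, and one checks this unique choice is automatically $\sigma$-invariant since $\sigma$ interchanges $i(e)$ and $t(e)$ compatibly with the labels. Second, treat the connection along the horizontal edges: here $\nabla_e$ must send $e\mapsto\bar e$, send the vertical edge at $i(e)$ to the vertical edge at $t(e)$ (by condition (b), since each fiber has a unique vertical pair), and for the remaining horizontal edge there is again a forced choice. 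The point is that $\sigma$ maps a horizontal edge $e$ to another horizontal edge $\sigma(e)$ lying in the ``$\sigma$-image'' fibers, and the connection data we have just written down transforms correctly under $\sigma$ because $\alpha$ is $\sigma$-invariant and the compatibility conditions that determine $\nabla$ are themselves $\sigma$-equivariant. Finally, one verifies that the $\nabla$ assembled this way genuinely satisfies the connection axioms (i),(ii) of Definition \ref{D: GKM graph} and the compatibility (a)--(c) with the fibration — this is a finite check on a graph with $4$ vertices and $8$ edges.

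The main obstacle I anticipate is not existence of \emph{some} compatible connection — that is guaranteed abstractly (e.g.\ by \cite[Proposition 2.3]{MR4363804}) — but rather showing that the symmetrization, or the direct forced construction, does not conflict with the compatibility requirement with $\nabla_B$ along the horizontal edges; in other words, the subtle point is that the constraints ``$\sigma$-invariant'' and ``covers $\nabla_B$'' are simultaneously satisfiable. The cleanest way around this is the observation that the graph $B$ (a biangle) has a \emph{canonical} connection $\nabla_B$ — on a biangle with two vertices there is no choice — and $\sigma$ projects to the identity (or the unique involution) on $B$, so the covering condition (c) is automatically $\sigma$-compatible. One then argues that at each vertex the map $H_v\to B_{\pi(v)}$ being an isomorphism, together with conditions (a)--(c), leaves at most one candidate for $\nabla_e$ on each edge $e$ in the cases at hand, and uniqueness forces $\sigma$-invariance. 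Where genuine choices remain (if any, e.g.\ in distributing the connection among the two horizontal edges in product type when several are consistent), one simply makes the choice on one $\sigma$-orbit of edges and propagates it by $\sigma$; consistency is then immediate because applying $\sigma$ twice is the identity and $\sigma$ preserves $\alpha$.
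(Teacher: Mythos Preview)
Your core idea — define the connection on one $\sigma$-orbit of horizontal edges and propagate by $\sigma$ — is correct and is exactly what the paper does. However, your attempt to first argue that most of $\nabla$ is \emph{forced} contains a genuine miscount. You write that for a horizontal edge $e$ the connection ``must send the vertical edge at $i(e)$ to the vertical edge at $t(e)$.'' But each vertex of $\Gamma$ is $4$-valent with \emph{two} vertical edges and two horizontal edges, so $\nabla_e$ has to biject a pair of vertical edges onto another pair. There are two such bijections, and condition~(b) alone does not single one out. This two-fold ambiguity is precisely what the paper later records as the $\mathbf{A}$/$\mathbf{D}$ dichotomy; so your claimed uniqueness along horizontal edges is false in general. (Similarly, your invocation of condition~(c) for vertical $e$ is misplaced: condition~(c) only constrains $\nabla_e$ when $e$ is horizontal.) Your final ``where genuine choices remain, symmetrize'' clause is what actually carries the proof, but the preceding case analysis misidentifies where those choices live.

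The paper sidesteps all of this: it takes any connection satisfying Definition~\ref{defn:GKMfibration} (whose existence is already known), keeps it on the two horizontal edges $e_1,e_2$ emanating from one chosen vertex, and overwrites it on the other two horizontal edges by the $\sigma$-conjugate $\sigma\circ\nabla_{e_i}\circ\sigma^{-1}$. No attempt is made to argue anything is forced. The vertical part need not be touched because for a vertical edge $e$ one has $\sigma(e)=\bar e$, and $\nabla_{\bar e}=\nabla_e^{-1}$ together with $\sigma^2=\mathrm{id}$ makes $\sigma$-invariance along vertical edges automatic. If you strip your argument down to the symmetrization step and drop the flawed uniqueness discussion, you recover the paper's proof.
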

\begin{proof}
We already know that we have a connection on $\Gamma$ satisfying the conditions of Definition \ref{defn:GKMfibration}. If it is not compatible with the swap automorphism, we may keep it on the two horizontal edges $e_1,e_2$ emerging from one of the vertices in one fiber, and modify it on the other two by replacing it with the connection along $e_1$ respectively $e_2$, conjugated by the swap automorphism.
\end{proof}

We fix a connection $\nabla$ as in the above lemma. The transports along the horizontal $(1,0)$ and $(0,1)$ edges can either agree or disagree on the vertical edges. We distinguish these cases by the letters {\bf A} (agree) and {\bf D} (disagree). By the choice of connection this does not depend on which horizontal edges we consider. We now fix signs for the weights in the fibers such that the congruences for the transport along the $(1,0)$-edges hold with positive sign. Now for these fixed sign choices the congruences along the $(0,1)$-edge hold with a certain unique sign. In case both are positive (resp.\ negative) we denote the case by $++$ (resp.\ $--$). In case signs are mixed we denote the case as $+-$. So far we have associated a GKM fibration together with a choice of connection as in \ref{lem: goodcon} and a choice of signs for the weights uniquely to one of $12$ cases (e.g.\ of the form \textbf{PA}$++$).

\begin{rem}\label{rem: labels explicit}
Within each case we can give an exhaustive list of possible axial functions up to isomorphism, parametrized over the labels $(a,b)$, $(c,d)$ in one fiber (see the picture above): after potentially swapping $(e,f)$ and $(g,h)$ we may assume that the transport along the $(1,0)$ horizontal edges maps the $(a,b)$ edge to the $(e,f)$ edge and the $(c,d)$ edge to the $(g,h)$ edge. With sign choices as above the congruences
\[(a,b)\equiv \pm(e,f)\mod (1,0),\qquad (c,d)\equiv \pm(g,h)\mod (1,0)\]
hold with positive sign, implying $b=f$, $d=h$. Now the congruences along the $(0,1)$-edges
yield
$(a,b)=(\pm e,f)$, $(c,d)=(\pm g,h)$ in cases \textbf{PA}, \textbf{TA} and
$(a,b)=(\pm g,f)$, $(c,d)=(\pm e,h)$ in cases \textbf{PD}, \textbf{TD} with the signs provided by the case distinction $++$, $+-$, $--$ (in the case $+-$ we assume that the first equality holds with positive sign and the second to hold with negative sign; otherwise swap the labels in both fibers). See Section \ref{sec:construction} for pictures of the occurring (orientable) GKM graphs.

Finally we observe that the condition of pairwise linear independence of adjacent weights forces some restrictions on which $a,b,c,d$ can occur. In every case $a,b,c,d\in \ZZ\backslash\{0\}$ due to linear independence between vertical and horizontal weights. Furthermore $ad-cb\neq 0$ due to linear independence of the vertical weights in the left hand fiber. Furthermore we can get an additional condition for linear independence of horizontal weights in the right hand fiber depending on the case:
\begin{center}
\begin{tabular}{c|c}
	case &  condition\\ \hline
	$\mathbf{A}++$, $\mathbf{A}--$ &  \\
	$\mathbf{A}+-$  & $ad+bc\neq 0$\\
	$\mathbf{D}++$, $\mathbf{D}--$ &  $cd-ab\neq 0$\\
	$\mathbf{D}+-$  & $cd+ab\neq 0$
	\end{tabular}
	\end{center}
Any choice of $a,b,c,d$ subject to the above conditions produces a GKM fibration of the respective type.
\end{rem}

With the above analysis of the labels we return to see whether the previous assignment of a case depends on the sign choices: In case \textbf{A} the result of the the $\{++,--,+-\}$ component is independent of the initial choice of signs for the weights in the fibers: indeed, suppose that $(a,b)=(\epsilon_1 e,f)$, $(c,d)=(\epsilon_2 g,h)$ for $\epsilon_i\in\{\pm 1\}$ as in Remark \ref{rem: labels explicit}. Then a different sign choice corresponds to replacing $(a,b),(e,f)$ by $(-a,-b),(-e,-f)$ (or analogously doing the same for the pair $(c,d),(g,h)$). Using the above equations, the $(0,1)$-congruences still read
\begin{align*}
(-a,-b)&= (-\epsilon_1e,-f)\equiv \epsilon_1 (-e,-f)\mod (0,1)\\
(c,d) &= (\epsilon_2 g,h)\equiv \epsilon_2(g,h)\mod (0,1)
\end{align*}
with the same signs $\epsilon_1,\epsilon_2$. The same happens when swapping the sign of the other pair of weights.

However in case \textbf{D} choosing a different sign for one weight in each fiber weights flips the signs of both congruences along the $(0,1)$-edges: Taking the equations $(a,b)=(\epsilon_1g,f)$ and $(c,d)=(\epsilon_2e,h)$ from Remark \ref{rem: labels explicit} and flipping the signs of the weights $(a,b)$, $(e,f)$, the congruences
 \begin{align*}
 (-a,-b)&=(-\epsilon_1g,-f)\equiv -\epsilon_1(g,h)\mod(0,1)\\
 (c,d)&=(\epsilon_2e,h)\equiv -\epsilon_2(-e,-f)\mod(0,1)
 \end{align*}
now hold with flipped signs. The same is observed when flipping the signs for the pair $(c,d),(g,h)$.
 
 Hence the cases \textbf{D$++$} and \textbf{D$--$} can be interchanged by choosing different signs for the weights. Henceforth we do not distinguish these cases, usually denoting both by \textbf{D}$++$. We arrive at a classification which takes as an input a GKM fibration with a choice of connection as in Lemma \ref{lem: goodcon} and associates it uniquely to one of the cases \textbf{A$++$}, \textbf{A$--$}, \textbf{A$+-$}, \textbf{D$++$}, and \textbf{D$+-$}, each of which can occur in combination with underlying graph structure \textbf{P} or \textbf{T}.

\begin{rem}\label{rem: overlap}
The choice of connection is not always unique: If $a=\pm c$, the connection along the horizontal $(0,1)$-edges can be freely chosen to be any bijection between the fiber edges. Similarly if $b=\pm d$, the same holds for the $(1,0)$-edges. Hence if either holds, the GKM graph can be endowed with different connections as in Lemma \ref{lem: goodcon} such that it can be associated to case \textbf{A} as well as \textbf{D}. Note however that once the connection is fixed, the sign component in the case distinction are uniquely determined as described above.
\end{rem}

As Proposition \ref{prop:GKMgraphorientable} states that the GKM graph of a GKM manifold is orientable, we need to understand which of the total spaces of the GKM fibrations under consideration are orientable.

\begin{prop}\label{prop:combinatorialclassification} Consider a GKM $T^2$-fibration $\Gamma\rightarrow B$, whose fiber is a biangle, where $B$ is the graph of the standard action on $S^4$ and choose a connection as in Lemma \ref{lem: goodcon}. If $\Gamma$ belongs to one of the types
\[
{\mathbf{PA}}++,\quad {\mathbf{PD}}++,\quad {\mathbf{PA}}--,\quad {\mathbf{TA}}+-, \quad \textrm{or} \quad {\mathbf{TD}}+-.
\]
then it is orientable. If it belongs to the remaining types
\[
{\mathbf{PA}}+-,\quad {\mathbf{PD}}+-,\quad {\mathbf{TA}}++,\quad {\mathbf{TA}}--
,\quad \textrm{or} \quad {\mathbf{TD}}++
\]
then it is not orientable.
\end{prop}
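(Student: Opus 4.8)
The plan is to compute the orientation obstruction $\eta$ from Definition \ref{defn:orientable} directly on the explicit GKM graphs described in Remark \ref{rem: labels explicit}, using the connection fixed in Lemma \ref{lem: goodcon}. Since $\Gamma$ has only four vertices, the cycle space of the underlying (unsigned) graph is generated by a small number of cycles; in the product case $\mathbf{P}$ one can take the four ``square'' cycles formed by two horizontal edges and two vertical edges, together with the two bigon cycles in each fiber, while in the twisted case $\mathbf{T}$ one adjusts the horizontal part of the squares accordingly. Because $\eta$ is multiplicative along paths and $\eta(e)\eta(\overline e)=1$, it descends to a homomorphism from $H_1(\Gamma;\ZZ/2)$ to $\{\pm1\}$, so it suffices to evaluate $\prod\eta(e_i)$ on this explicit generating set of cycles. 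The graph is then orientable precisely when all these products equal $1$.

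The key computation is the local sign $\varepsilon_i$ in $\tilde\alpha(\nabla_e(e_i))\equiv \varepsilon_i\tilde\alpha(e_i)\bmod\tilde\alpha(e)$ at each vertex, for each of the $3$-valent stars. Concretely: fix lifts of all weights as in Remark \ref{rem: labels explicit}, namely the two vertical weights $(a,b),(c,d)$ at the left fiber and the two horizontal weights $(1,0),(0,1)$; the connection along a $(1,0)$-edge sends vertical edges to vertical edges and the single remaining horizontal edge to its reverse, and similarly along $(0,1)$-edges and along the vertical (bigon) edges. For a horizontal edge $e$ with $\tilde\alpha(e)=(1,0)$ emanating from a vertex, $\eta(e)=-\varepsilon_{\mathrm{hor}}\cdot\varepsilon_{v_1}\cdot\varepsilon_{v_2}$ where $\varepsilon_{\mathrm{hor}}$ is the sign for the transport of the other horizontal edge (this is $+1$ since $\nabla_e\overline{\text{(other hor.)}}$ reverses it and $(0,1)\equiv(0,1)\bmod(1,0)$) and $\varepsilon_{v_1},\varepsilon_{v_2}$ are exactly the congruence signs along that edge, i.e.\ the $++$, $+-$, $--$ data (for $(1,0)$-edges these are $+,+$ by our sign normalization). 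For a vertical bigon edge $e$, $\eta(e)=-\varepsilon_{v}\cdot\varepsilon_{h_1}\cdot\varepsilon_{h_2}$ where $\varepsilon_v=+1$ as the bigon connection reverses the parallel vertical edge, and $\varepsilon_{h_1},\varepsilon_{h_2}$ record how the two horizontal edges at that vertex are permuted and rescaled by the vertical connection — and here the interplay of $\mathbf{P}$ versus $\mathbf{T}$ (whether the vertical transport matches the horizontal ones or not) enters. Assembling $\prod\eta(e_i)$ around each generating cycle then yields a product of the relevant congruence signs, and one reads off that it is $+1$ exactly in the five listed types.

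The main obstacle I expect is bookkeeping the signs consistently across vertices: one must be careful that the lift $\tilde\alpha$ is globally fixed (with $\tilde\alpha(\overline e)=\tilde\alpha(e)$) while the congruence signs $\varepsilon_i$ are read off relative to that single lift, and that the connection chosen in Lemma \ref{lem: goodcon} — which is swap-equivariant — is used uniformly, so that the value at the two vertices of a fiber are related by the swap and do not introduce spurious signs. A secondary subtlety is that in the degenerate subcases of Remark \ref{rem: overlap} ($a=\pm c$ or $b=\pm d$) the connection is not unique; but by that remark the sign component of the case is still determined once a connection of the Lemma \ref{lem: goodcon} type is fixed, so the orientability verdict is well-defined and unaffected. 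Once the per-edge formulas for $\eta$ are in hand, the five generating cycles give a short finite check; I would organize it as a single table listing, for each of the $10$ types, the product of $\eta$ around a square cycle and around a fiber bigon, observing that the bigon always contributes $+1$ and that the square contributes the product of the two $(0,1)$-congruence signs in case $\mathbf{P}$ and its negative (from the twist) in case $\mathbf{T}$, which immediately separates the orientable from the non-orientable types exactly as claimed.
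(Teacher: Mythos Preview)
Your proposal is correct and follows essentially the same route as the paper: fix the lift and connection, compute $\eta$ edge by edge, then test on cycles. The paper's execution is considerably more compressed: it simply records that $\eta(e)=-1$ for every vertical edge and every $(1,0)$-edge, while $\eta(e)=-1$ on $(0,1)$-edges in the $++$ and $--$ cases and $\eta(e)=+1$ in the $+-$ case, and then finishes with a parity observation (all closed paths in the product graph have even length; odd-length cycles exist in the twisted graph). Your generator-of-$H_1$ bookkeeping and your discussion of the vertical connection reach the same conclusion but are more elaborate than necessary; in particular, the paper sidesteps your $\varepsilon_{h_1},\varepsilon_{h_2}$ analysis on vertical edges by tacitly using the compatible vertical connection that preserves horizontal labels, which forces those signs to be $+1$.
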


\begin{proof}
We choose signs for the weights as prescribed by the labels above. Then for any vertical edge $e$, $\eta(e)=-1$, where $\eta:E(\Gamma)\to \{\pm 1\}$ is the map used in the definition of orientability, see Definition \ref{defn:orientable}. Also, any horizontal edge $e$ labelled $(1,0)$ satisfies $\eta(e)=-1$, while any horizontal edge $e$ labelled $(0,1)$ satisfies $\eta(e)=-1$ in case of a fibration of type $++$ or $--$, and $\eta(e)=1$ in case of a fibration of type $+-$. 
Hence, the graph of a fibration of product type is orientable if and only if it is of type $++$ or $--$: any closed edge path consists of an even number of edges. The graph of a fibration of twisted type is orientable if and only if it is of type $+-$.
\end{proof}

\section{Construction of the bundles} \label{sec:construction}

\begin{rem}\label{rem: conventions}
We fix some conventions for this section.
\begin{enumerate}
\item We identify $\RR^4\cong \CC^2$ via $(a,b,c,d)\mapsto (a+bi,c+di)$. This identification fixes an embedding $\U(2)\rightarrow \SO(4)$.
\item We identify $\CC^2\cong \HH$ via $(u,v)\mapsto u+jv$. When identifying $\RR^4\cong \HH$ we use the composition of $(i)$ and $(ii)$.
\item When translating between $\SU(2)$ and unit quaternions we identify $\left(\begin{matrix}z & -\overline{w}\\ w & \overline{z} \end{matrix}\right)$ with the unit quaternion $z+jw$. Note that the standard action of $\SU(2)$ on $\CC^2$ becomes left multiplication with unit quaternions, upon identifying $\CC^2=\HH$ as above.
\item We consider $\SO(4)\cong\SU(2)\times \SU(2)/\ZZ_2$, by letting $\SU(2)\times \SU(2)$ act on $\RR^4=\HH$ via quaternionic multiplication $(v,w)\cdot h = vhw^{-1}$. By (iii), the inclusion $\SU(2)\rightarrow \SO(4)$ onto the left hand factor agrees with the inclusion $\SU(2)\subset \U(2)\rightarrow \SO(4)$ from (i)\item We use the embedding $\SO(4)\rightarrow \SO(5)$ in the upper left block.
\end{enumerate}
\end{rem}

\subsection{Preliminaries to the construction}

We consider, for a fixed integer $n$, the map 
\[
f_{n} \colon \CC\to \CC;\, z\mapsto \begin{cases} z^n/|z|^{n-1} & z\neq 0 \\ 0 & z = 0.\end{cases}
\] 
On $\CC\setminus \{0\}$ it is a (real) smooth map and in $z=0$ it is continuous, since $|f_{n}(z)| = |z|$ for all $z$. Then we define, for integers $n,m$ a map
\[
A_{n,m}:S^3\longrightarrow \SU(2);\,
A_{n,m}(z_1,z_2):=\left(\begin{matrix}
		f_{n}(z_{1}) & -f_{-m}(z_{2}) \\ 
		f_{m}(z_{2}) & f_{-n}(z_{1})
\end{matrix}\right).
\]
From the description
\[
\SU(2)=\left\{\left. \left(\begin{matrix}z & -\overline{w}\\ w & \overline{z}\end{matrix}\right)\, \right|\, |z|^2+|w|^2=1\right\}
\]
one checks directly that $A_{n,m}$ indeed takes values in $\SU(2)$. Consider the standard $T^2$-action on $S^3$ 
\[
(s,t)\cdot (z,w) = (sz,tw).
\]
Via the identification $S^3\cong \SU(2); (z,w)\mapsto \left(\begin{matrix}z & -\overline{w} \\ w & \overline{z}\end{matrix}\right)$ this action becomes
\[
(s,t)\cdot \left(\begin{matrix}z & -\overline{w} \\ w & \overline{z}\end{matrix}\right)=\left(\begin{matrix}sz & -\overline{t}\overline{w} \\ tw &\overline{s} \overline{z}\end{matrix}\right).
\]

\begin{lem}\label{lem:Aequiv}
The map $A_{n,m}:S^3\to \SU(2)$ satisfies the following equivariance property: \[
A_{n,m}((s,t)\cdot (z,w)) = (s^{n},t^{m})\cdot A_{n,m}(z,w).
\]
\end{lem}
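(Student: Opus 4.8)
The plan is to verify the equivariance identity by direct computation, exploiting the fact that each matrix entry of $A_{n,m}$ is built out of the single-variable maps $f_k$, and that these interact nicely with the circle action. First I would record the elementary transformation law for $f_k$: for $z\in\CC$ and $s\in S^1$ one has $f_k(sz) = s^k f_k(z)$, which is immediate from the formula $f_k(z)=z^k/|z|^{k-1}$ together with $|sz|=|z|$ (and it holds trivially at $z=0$). This is the only analytic input needed; everything else is bookkeeping.

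Next I would compute the two sides of the claimed identity entrywise. On the one hand, $A_{n,m}((s,t)\cdot(z,w)) = A_{n,m}(sz,tw)$ has entries $f_n(sz)=s^nf_n(z)$, $-f_{-m}(tw)=-t^{-m}f_{-m}(w)=-\overline{t}^{\,m}f_{-m}(w)$, $f_m(tw)=t^mf_m(w)$, and $f_{-n}(sz)=s^{-n}f_{-n}(z)=\overline{s}^{\,n}f_{-n}(z)$, using $s^{-1}=\overline{s}$ and $t^{-1}=\overline{t}$ for unit complex numbers. On the other hand, using the description of the $T^2$-action on $\SU(2)$ recorded just before the lemma, $(s^n,t^m)\cdot A_{n,m}(z,w)$ is obtained from $A_{n,m}(z,w)=\left(\begin{smallmatrix} f_n(z_1) & -\overline{f_{-n}(z_1)}\\ f_m(z_2) & \overline{f_n(z_1)}\end{smallmatrix}\right)$ — wait, here I should be careful: the matrix $A_{n,m}$ as written is not literally in the standard $\left(\begin{smallmatrix} z & -\overline w\\ w& \overline z\end{smallmatrix}\right)$ form unless $f_{-n}(z_1)=\overline{f_n(z_1)}$, which indeed holds since $f_{-n}(z)=z^{-n}/|z|^{-n-1}=|z|^{n+1}/z^n = |z|^{n+1}\overline{z}^n/|z|^{2n}=\overline{z}^n/|z|^{n-1}=\overline{f_n(z)}$, and similarly $f_{-m}(z_2)=\overline{f_m(z_2)}$. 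So writing $z:=f_n(z_1)$, $w:=f_m(z_2)$ we have $A_{n,m}(z_1,z_2)=\left(\begin{smallmatrix} z & -\overline w\\ w & \overline z\end{smallmatrix}\right)$, and the action formula gives $(s^n,t^m)\cdot A_{n,m}(z_1,z_2)=\left(\begin{smallmatrix} s^nz & -\overline{t^m}\,\overline w\\ t^m w & \overline{s^n}\,\overline z\end{smallmatrix}\right)$. Comparing entry by entry with the first computation, both sides have first entry $s^nf_n(z_1)$, lower-left entry $t^mf_m(z_2)$, upper-right entry $-\overline{t}^{\,m}\overline{f_m(z_2)} = -\overline{t^m}\,\overline{w}$, and lower-right entry $\overline{s}^{\,n}\overline{f_n(z_1)}=\overline{s^n}\,\overline z$. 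Hence the two matrices agree.

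So the proof has essentially two ingredients: the scaling law $f_k(sz)=s^kf_k(z)$ and the conjugation-symmetry $f_{-k}(z)=\overline{f_k(z)}$, the latter ensuring $A_{n,m}$ genuinely lands in $\SU(2)$ in the standard parametrization and that the $\SU(2)$-action formula applies cleanly. I do not anticipate a real obstacle here; the only point requiring a moment's care is keeping the complex conjugations straight when the circle parameters are moved past the barred entries, i.e. remembering $\overline{s^{-n}}=s^n$ is wrong and instead $s^{-n}=\overline{s^n}$ since $|s|=1$. I would present the argument as: (1) state and prove the two identities for $f_k$; (2) deduce $A_{n,m}(z_1,z_2)=\left(\begin{smallmatrix} f_n(z_1) & -\overline{f_m(z_2)}\\ f_m(z_2) & \overline{f_n(z_1)}\end{smallmatrix}\right)$; (3) compute both sides of the equivariance relation and observe they coincide.
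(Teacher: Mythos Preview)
Your proof is correct and follows essentially the same approach as the paper: both compute $A_{n,m}(sz,tw)$ entrywise using the scaling law $f_k(sz)=s^kf_k(z)$ and then identify the result with $(s^n,t^m)\cdot A_{n,m}(z,w)$ via the explicit formula for the $T^2$-action on $\SU(2)$. Your additional verification that $f_{-k}(z)=\overline{f_k(z)}$ is a nice detail making explicit why $A_{n,m}$ sits in the standard $\SU(2)$ parametrization, something the paper asserts just before the lemma; one small slip in your commentary is that for $|s|=1$ both $\overline{s^{-n}}=s^n$ and $s^{-n}=\overline{s^n}$ are true, so neither is ``wrong,'' but this does not affect your actual computation.
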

\begin{proof}
\begin{align*}
A_{n,m}(sz,tw) &= \left(\begin{matrix}
		f_{n}(sz) & -f_{-m}(tw) \\ 
		f_{m}(tw) & f_{-n}(sz)
\end{matrix}\right) \\
&= \left(\begin{matrix}
		s^{n}f_{n}(z) & -t^{-m}f_{-m}(w) \\ 
		t^{m}f_{m}(w) & s^{-n}f_{-n}(z)
\end{matrix}\right) = (s^{n},t^{m})\cdot A_{n,m}(z,w).
\end{align*}
\end{proof}

For reference in several computations below, we mention the following lemma:

\begin{lem}\label{lem:equivH}
For any $B\in \SU(2)$ and any $s,t\in S^1$ we have
\[
\left(\begin{matrix}\overline{s} & 0 \\ 0 & \overline{t}\end{matrix}\right)\cdot B \cdot \left(\begin{matrix} s & 0 \\ 0 & t\end{matrix}\right) = (1,s\overline{t})\cdot B.
\]
\end{lem}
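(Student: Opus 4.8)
The plan is to prove the identity by a direct matrix computation, using only the explicit description of $\SU(2)$ and the formula for the $T^2$-action on it recorded above. Write $B=\left(\begin{smallmatrix}z & -\overline w\\ w & \overline z\end{smallmatrix}\right)$ with $|z|^2+|w|^2=1$. First I would multiply out the left-hand side: since $\left(\begin{smallmatrix}\overline s & 0\\ 0 & \overline t\end{smallmatrix}\right)B=\left(\begin{smallmatrix}\overline s z & -\overline s\overline w\\ \overline t w & \overline t\overline z\end{smallmatrix}\right)$, multiplying on the right by $\left(\begin{smallmatrix}s & 0\\ 0 & t\end{smallmatrix}\right)$ and using $s\overline s=t\overline t=1$ gives $\left(\begin{smallmatrix}z & -\overline s t\,\overline w\\ s\overline t\, w & \overline z\end{smallmatrix}\right)$.

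Next I would expand the right-hand side using the action formula $(s',t')\cdot\left(\begin{smallmatrix}z & -\overline w\\ w & \overline z\end{smallmatrix}\right)=\left(\begin{smallmatrix}s'z & -\overline{t'}\overline w\\ t'w & \overline{s'}\overline z\end{smallmatrix}\right)$ stated just before Lemma \ref{lem:Aequiv}, applied with $(s',t')=(1,s\overline t)$. This yields $\left(\begin{smallmatrix}z & -\overline{s\overline t}\,\overline w\\ s\overline t\, w & \overline z\end{smallmatrix}\right)=\left(\begin{smallmatrix}z & -\overline s t\,\overline w\\ s\overline t\, w & \overline z\end{smallmatrix}\right)$, since $\overline{s\overline t}=\overline s t$. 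Comparing entries, the two matrices agree, which proves the lemma.

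Conceptually, this computation merely records the fact that, because $\mathrm{diag}(s,t)$ is unitary, the left-hand side equals the conjugate $g^{-1}Bg$ with $g=\mathrm{diag}(s,t)$, and conjugation by $g$ depends only on the class of $g$ modulo the central scalars $S^1\cdot\Id$ in $\U(2)$; hence it equals conjugation by $\mathrm{diag}(1,\overline s t)$, and carrying this out reproduces exactly the action of $(1,s\overline t)$. There is no genuine obstacle here; the only point requiring care is the consistency of the sign and ordering conventions for the $T^2$-action on $\SU(2)$ fixed above, so that the identification of $g^{-1}Bg$ with $(1,s\overline t)\cdot B$ comes out with the stated second coordinate $s\overline t$ rather than its inverse.
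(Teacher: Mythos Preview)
Your proof is correct and essentially identical to the paper's: both write $B=\left(\begin{smallmatrix}z & -\overline w\\ w & \overline z\end{smallmatrix}\right)$, multiply out the conjugation to obtain $\left(\begin{smallmatrix}z & -\overline s t\,\overline w\\ s\overline t\, w & \overline z\end{smallmatrix}\right)$, and recognize this as $(1,s\overline t)\cdot B$. Your added conceptual remark about conjugation depending only on the class modulo the center of $\U(2)$ is a nice observation but not in the paper.
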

\begin{proof} We write $B = \left(\begin{matrix}z & -\overline{w} \\ w & \overline{z} \end{matrix}\right)$ and compute
\[
\left(\begin{matrix}\overline{s} & 0 \\ 0 & \overline{t}\end{matrix}\right)\cdot
B \cdot \left(\begin{matrix}s & 0 \\ 0 & t\end{matrix}\right)
 = \left(\begin{matrix}\overline{s} & 0 \\ 0 & \overline{t}\end{matrix}\right)\cdot
\left(\begin{matrix}z & -\overline{w} \\ w & \overline{z} \end{matrix}\right) \cdot \left(\begin{matrix}s & 0 \\ 0 & t\end{matrix}\right)
 = \left(\begin{matrix}z & -\overline{s}t\overline{w} \\ s\overline{t} w & \overline{z}\end{matrix}\right) =  (1,s\overline{t})\cdot B.
\]
\end{proof}

In the following subsections, we will construct various $T^2$-equivariant $S^4$-bundles over the base $S^4$, equipped with the standard $T^2$-action, using explicit clutching maps involving the maps $A_{n,m}$. In every example, we consider two copies of $D^4\times S^4$, the first of which being endowed with the $T^2$-action
\begin{equation}\label{eq:firstaction}
(s,t)\cdot ((z,w),v) = ((sz,tw),\varphi(s,t)v),
\end{equation}
where 
\[
\varphi:T^2\longrightarrow \U(2)\subset \SO(4)\subset \SO(5);\, \varphi(s,t) = \left(\begin{matrix}s^a t^b & 0 \\ 0& s^ct^d \end{matrix}\right).
\]
In this way the associated GKM fibration will have the standard weights $(1,0)$ and $(0,1)$ in the base, and the weights $(a,b)$ and $(c,d)$ in one of the fibers, which is in alignment with our conventions in Section \ref{sec:combinatorial}. The task is to find gluing maps $\Psi:S^3\times S^4\to S^3\times S^4$ that are equivariant with respect to various $T^2$-actions on $D^4\times S^4$ of the form $(s,t)\cdot ((z,w),v) = ((sz,tw),\psi(s,t)v)$, with $\psi$ some homomorphism from $T^2$ into the standard maximal torus of $\U(2)\subset \SO(5)$, yielding the different types of orientable GKM graphs obtained in Section \ref{sec:combinatorial}.

Since the $A_{n,m}$ are merely continuous and not smooth, we close the preliminaries by discussing an equivariant smoothing procedure, which gives smooth structures on the examples such that the constructed actions are smooth. A sphere bundle constructed as above can be seen as the sphere bundle of a $T$-equivariant continuous real vector bundle $E\rightarrow S^4$ which arises as the respective gluing of two copies of $D^4\times \RR^5$.
From \cite[Remark 5.6]{MR4634089} we infer that there is a finite-dimensional linear
$T$-representation $W$ and a continuous equivariant map $f \colon S^4 \to
\mathrm{Gr}_{k}(W)$, such that $E\cong f^{\ast}(\gamma_{k})$ as $T$-equivariant vector bundles, where $\gamma_{k} \to \mathrm{Gr}_{k}(W)$ is the tautological bundle over the Grassmanian of linear subspaces of $W$ of
dimension $k$. 

The strategy is to equivariantly homotope $f$ to a smooth map, i.e. the homotopy should be equivariant w.r.t.\ the action on $S^4\times I$ via the first factor. This is indeed possible by

\begin{lem}[{\cite[Theorem 4.2]{MR0413144}}]\label{L: equivariantsmoothing}
Let $M$ and $N$ be smooth $T$-manifolds and $f \colon M \to N$ a continuous equivariant
map, then there is an equivariant homotopy to a smooth equivariant map $f_{1} \colon M \to
N$.
\end{lem}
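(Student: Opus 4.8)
The plan is to transport the problem to a linear $T$-representation, average an ordinary smooth approximation over $T$ to make it equivariant, and then project back to $N$; the point is that $f$ being \emph{already} equivariant is exactly what makes the averaging step harmless.

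First I would invoke the Mostow--Palais equivariant embedding theorem: the smooth $T$-manifold $N$ (compact in the case of interest here, where $N=\mathrm{Gr}_k(W)$) admits an equivariant smooth embedding $\iota\colon N\hookrightarrow\RR^q$ onto a closed submanifold, for a suitable orthogonal linear $T$-action on $\RR^q$. Averaging a Riemannian metric on $\RR^q$ over $T$ and using the normal exponential map yields a $T$-invariant open tubular neighbourhood $U$ of $\iota(N)$ together with an equivariant smooth retraction $r\colon U\to N$; one also fixes a continuous $T$-invariant function $\rho\colon\iota(N)\to\RR_{>0}$ such that $U$ contains the $\rho(y)$-ball about every $y\in\iota(N)$.

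Next, set $F=\iota\circ f$ and, by the Whitney approximation theorem, pick a smooth map $G\colon M\to\RR^q$ with $|G(x)-F(x)|<\delta(x)$ for all $x$, where $\delta\colon M\to\RR_{>0}$ is a prescribed continuous function chosen $T$-invariant and small enough that $\delta(x)<\rho(F(x))$ everywhere (a small constant if $M$ is compact). Define
\[
\widetilde G(x)=\int_T t^{-1}\cdot G(t\cdot x)\,dt
\]
with respect to normalized Haar measure on $T$; this is smooth by differentiation under the integral sign and $T$-equivariant. Since $F$ is equivariant, $\int_T t^{-1}\cdot F(t\cdot x)\,dt=F(x)$, so — the action being orthogonal — $|\widetilde G(x)-F(x)|\le\sup_{t\in T}|G(t\cdot x)-F(t\cdot x)|<\delta(x)$, and hence $\widetilde G(M)\subset U$. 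Then $f_1:=r\circ\widetilde G\colon M\to N$ is smooth and equivariant. For the homotopy, the straight-line path $H_s(x)=(1-s)F(x)+s\widetilde G(x)$, $s\in[0,1]$, has length $<\delta(x)<\rho(F(x))$, hence stays in $U$, and each $H_s$ is equivariant because $F$ and $\widetilde G$ are and the $T$-action is linear; so $(s,x)\mapsto r\bigl(H_s(x)\bigr)$ is a continuous equivariant homotopy from $r\circ\iota\circ f=f$ to $f_1$.

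The main thing to verify carefully — rather than a genuine obstacle — is the existence of the equivariant embedding and of an equivariant tubular neighbourhood with retraction; both are classical for compact $N$, which is the only case used in this paper. The only other subtlety is a non-compact $M$, handled by controlling the approximation with a continuous positive function $\delta$ in place of a constant; and if one additionally wants the homotopy $M\times[0,1]\to N$ itself to be smooth (equivariant for the action through the first factor), this follows by running the same averaging/approximation argument once more on $M\times[0,1]$.
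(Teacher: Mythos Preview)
The paper does not supply a proof of this lemma: it is quoted verbatim as \cite[Theorem 4.2]{MR0413144} and used as a black box in the smoothing discussion preceding Section~\ref{subsec:PA++}. So there is no ``paper's own proof'' to compare against.

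Your argument is correct and is essentially the standard one. A few minor remarks: since the $T$-action on $\RR^q$ is already orthogonal, the Euclidean metric is $T$-invariant and no averaging of metrics is needed; the strict inequality $\sup_{t\in T}|G(t\cdot x)-F(t\cdot x)|<\delta(x)$ uses compactness of $T$ (the supremum is attained); and the Mostow--Palais step, as you note, requires some finiteness hypothesis (compactness of $N$, or finitely many orbit types), which holds in the application to $N=\mathrm{Gr}_k(W)$. The final parenthetical about smoothing the homotopy itself is not needed for the statement as written, which only asks for a continuous equivariant homotopy.
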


If we apply this to $f$, we obtain a smooth equivariant vector bundle $E_{1} \to S^4$ which is
isomorphic to $E \to S^4$ (in the equivariant continuous category, see \cite[Theorem 8.15]{MR889050}).

Using a Riemannian metric on $E_1$ we get a smooth equivariant sphere bundle $S(E_1)$. An equivariant isomorphism $\tilde f\colon E\rightarrow E_1$ yields a homeomorphism $S(E)\rightarrow S(E_1)$, $e\mapsto\frac{\tilde{f} (e)}{\|\tilde{f} (e)\|}$. Note that the scalars $\frac{1}{\|\tilde{f}(e)\|}$ are constant along the $T$-orbits. Since the actions on $E,E_1$ are linear, it follows, that the map $S(E)\rightarrow S(E_1)$ is an equivariant homeomorphism. In this way, all sphere bundles constructed below can be equipped with a smooth structure such that the $T$-action is smooth.

\subsection{The case {\textbf{PA}$++$}}\label{subsec:PA++}

Consider on $S^4\times S^4$ the product $T^2$-action with weights $(1,0)$, $(0,1)$ in the first factor, and $(a,b)$, $(c,d)$ in the second. The projection onto the first factor defines a bundle which we denote
\[
M^{{\mathbf{PA}++}}_{a,b,c,d}\longrightarrow S^4
\]
whose GKM fibration is given by the following product type graph:
\begin{center}
		\begin{tikzpicture}

			\node (a) at (0,0)[circle,fill,inner sep=2pt] {};
			\node (b) at (6,0)[circle,fill,inner sep=2pt]{};
			\node (c) at (0,4)[circle,fill,inner sep=2pt]{};	
			\node (d) at (6,4)[circle,fill,inner sep=2pt]{};			
			
			\node at (-1,2) {$(a,b)$};
			\node at (+1,2) {$(c,d)$};
			\node at (5,2) {$(a,b)$};
			\node at (7,2) {$(c,d)$};
			\node at (3,5) {$(1,0)$};
			\node at (3,3) {$(0,1)$};
			\node at (3,1) {$(1,0)$};
			\node at (3,-1) {$(0,1)$};

			\draw [very thick](a) to[in=160, out=20] (b);
			\draw [very thick](a) to[in=200, out=-20] (b);
			\draw [very thick](c) to[in=160, out=20] (d);
			\draw [very thick](c) to[in=200, out=-20] (d);
			\draw [very thick](a) to[in=250, out=110] (c);
			\draw [very thick](a) to[in=290, out=70] (c);
			\draw [very thick](b) to[in=250, out=110] (d);
			\draw [very thick](b) to[in=290, out=70] (d);
			
		\end{tikzpicture}
	\end{center}

\subsection{The case {\textbf{PD}$++$}}\label{subsec:PD++}

We consider the bundle 
\[
M^{{\mathbf{PD}++}}_{a,b,c,d}\longrightarrow S^4
\]
given by the clutching function
\[
\Psi^{{\mathbf{PD}++}}_{a,b,c,d}:S^3\times S^4\longrightarrow S^3\times S^4;\, ((z,w),v)\longmapsto ((z,w),A_{c-a,d-b}(z,w) v),
\]
with $A_{c-a,d-b}(z,w)\in \SU(2)\subset \U(2)\subset \SO(4)\subset \SO(5)$ as in Remark \ref{rem: conventions}. On the second copy of $D^4\times S^4$ we consider the $T^2$-action
\begin{equation}\label{eq:secondaction1}
(s,t)\cdot ((z,w),v) = ((sz,tw),\psi(s,t) v).
\end{equation}
where $\psi(s,t):T^2\to \U(2)\subset \SO(4)\subset \SO(5)$ is defined by
\[
\psi(s,t) = \left(\begin{matrix}s^{c}t^b & 0 \\ 0 & s^{a}t^d\end{matrix}\right).
\]

\begin{lem}\label{lem:equiv1}
$\Psi^{{\mathbf{PD}++}}_{a,b,c,d}$ is equivariant with respect to the actions \eqref{eq:firstaction} and \eqref{eq:secondaction1} on $S^3\times S^4$.
\end{lem}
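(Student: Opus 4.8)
The plan is to verify the equivariance identity directly by computing both sides of the required equation, using the equivariance property of $A_{n,m}$ from Lemma \ref{lem:Aequiv} together with the commutation relation of Lemma \ref{lem:equivH}. Writing $\Psi = \Psi^{\mathbf{PD}++}_{a,b,c,d}$, what must be shown is that for all $(s,t)\in T^2$ and $((z,w),v)\in S^3\times S^4$, applying the action \eqref{eq:secondaction1} after $\Psi$ agrees with applying $\Psi$ after the action \eqref{eq:firstaction}; that is,
\[
\Psi\big((sz,tw),\varphi(s,t)v\big) = \big((sz,tw),\psi(s,t)A_{c-a,d-b}(z,w)v\big).
\]

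First I would compute the left-hand side from the definition of $\Psi$: it equals $\big((sz,tw), A_{c-a,d-b}(sz,tw)\,\varphi(s,t)v\big)$. So the claim reduces to the matrix identity
\[
A_{c-a,d-b}(sz,tw)\,\varphi(s,t) = \psi(s,t)\,A_{c-a,d-b}(z,w)
\]
acting on $v\in S^4\subset \RR^5$, where all matrices sit in $\SO(5)$ via the upper-left block embedding of $\U(2)\subset\SO(4)$. Here I would apply Lemma \ref{lem:Aequiv} with $n=c-a$, $m=d-b$ to rewrite $A_{c-a,d-b}(sz,tw) = (s^{c-a},t^{d-b})\cdot A_{c-a,d-b}(z,w)$, where the dot denotes the left action of the diagonal torus $\left(\begin{smallmatrix}s^{c-a}&0\\0&t^{d-b}\end{smallmatrix}\right)$. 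Then the left-hand side becomes $\operatorname{diag}(s^{c-a},t^{d-b})\,A_{c-a,d-b}(z,w)\,\operatorname{diag}(s^a t^b, s^c t^d)$, and I would invoke Lemma \ref{lem:equivH} (with the roles of $s,t$ there played by appropriate monomials) to push the right-hand diagonal factor through $A_{c-a,d-b}(z,w)$: conjugating $B\in\SU(2)$ by $\operatorname{diag}(s^a t^b, s^c t^d)$ on the right and $\operatorname{diag}(s^{c-a},t^{d-b})$ — or rather the relevant combination — on the left. Matching exponents, $\operatorname{diag}(s^{c-a}s^a t^b, t^{d-b}s^c t^d) = \operatorname{diag}(s^c t^b, s^c t^d)$; comparing with $\psi(s,t)=\operatorname{diag}(s^c t^b, s^a t^d)$ shows one then uses Lemma \ref{lem:equivH} to absorb the discrepancy $\operatorname{diag}(1, s^{c-a}t^{d-b})$ into $A_{c-a,d-b}(z,w)$ as a left multiplication, which is exactly the form $(1, s\overline t)\cdot B$ appearing there.

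Concretely, the cleanest route is: $\operatorname{diag}(\overline s^{\,a'},\overline t^{\,b'})\cdot B\cdot \operatorname{diag}(s^{a'},t^{b'}) = (1, s^{a'}\overline t^{\,b'})\cdot B$ is Lemma \ref{lem:equivH} applied to monomials; I would choose the exponents so that the accumulated diagonal matrix on the left equals $\psi(s,t)$ on the nose. The bookkeeping of exponents is the only real content, and it is routine given the two lemmas; I expect no genuine obstacle beyond being careful that the $\SU(2)$-valued maps are being multiplied inside $\SO(5)$ via the fixed block embedding, so that $\varphi(s,t)$ and $\psi(s,t)$ — which are genuinely in $\U(2)$, not $\SU(2)$ — still commute appropriately with the block structure (they do, since they are diagonal and the embedding $\U(2)\to\SO(4)$ from Remark \ref{rem: conventions}(i) is a group homomorphism). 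I would conclude by noting that the fifth coordinate of $\RR^5$ is fixed by all the matrices involved, so the identity on $\RR^4\cong\CC^2$ suffices, completing the verification.
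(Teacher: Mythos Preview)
Your approach is essentially the paper's: reduce to a matrix identity, apply Lemma~\ref{lem:Aequiv} to rewrite $A_{c-a,d-b}(sz,tw)$, then use Lemma~\ref{lem:equivH} to commute the diagonal $\varphi(s,t)$ past $A_{c-a,d-b}(z,w)$ and collect into $\psi(s,t)$.

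There is one genuine slip in your write-up that you should fix before this becomes a proof. The $T^2$-action $(s,t)\cdot B$ on $\SU(2)$ (defined via the identification $\SU(2)\cong S^3$) is \emph{not} left matrix multiplication by $\operatorname{diag}(s,t)$: for $B=\left(\begin{smallmatrix}z&-\overline w\\ w&\overline z\end{smallmatrix}\right)$ one has $(s,t)\cdot B=\left(\begin{smallmatrix}sz&-\overline t\,\overline w\\ tw&\overline s\,\overline z\end{smallmatrix}\right)$, whereas $\operatorname{diag}(s,t)B$ generally lands outside $\SU(2)$. Hence your line ``the left-hand side becomes $\operatorname{diag}(s^{c-a},t^{d-b})\,A_{c-a,d-b}(z,w)\,\operatorname{diag}(s^at^b,s^ct^d)$'' is not correct as written, and the subsequent ``matching exponents'' computation (yielding $\operatorname{diag}(s^ct^b,s^ct^d)$ and the discrepancy $\operatorname{diag}(1,s^{c-a}t^{d-b})$) is based on this conflation. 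The paper keeps the dot-action and matrix multiplication notationally separate throughout: it first moves $\varphi(s,t)$ to the left of the bracketed $\SU(2)$ element via Lemma~\ref{lem:equivH} (this introduces the factor $(1,s^{a-c}t^{b-d})$ \emph{inside} the dot-action), combines with $(s^{c-a},t^{d-b})$ to obtain $(s^{c-a},s^{a-c})$, and only then --- because the two entries are now conjugate --- rewrites this as honest left multiplication by $\operatorname{diag}(s^{c-a},s^{a-c})$. That last step is exactly where the dot-action and matrix multiplication coincide, and it is what produces $\psi(s,t)$ on the nose. Once you keep the two operations distinct, your sketch goes through identically to the paper's.
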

\begin{proof}
We compute
\begin{align*}
	\Psi^{{\mathbf{PD}++}}_{a,b,c,d}&((s,t)\cdot ((z,w),v)) \\
	&= ((sz,tw),A_{c-a,d-b}(sz,tw)\left(\begin{matrix}s^a t^b & 0 \\ 0 & s^ct^d  \end{matrix} \right)\cdot v) \\
&= ((sz,tw),[(s^{c-a},t^{d-b})\cdot A_{c-a,d-b}(z,w)] \left(\begin{matrix}s^a t^b & 0 \\ 0 & s^ct^d  \end{matrix} \right) \cdot v)\\
&= ((sz,tw),\left(\begin{matrix}s^a t^b & 0 \\ 0 & s^ct^d  \end{matrix} \right) [(1,s^{a-c}t^{b-d})\cdot (s^{c-a},t^{d-b})\cdot A_{c-a,d-b}(z,w)]\cdot v)\\
&= ((sz,tw),\left(\begin{matrix}s^a t^b & 0 \\ 0 & s^ct^d  \end{matrix} \right) [(s^{c-a},s^{a-c})\cdot A_{c-a,d-b}(z,w)]\cdot v\\
&= ((sz,tw),\left(\begin{matrix}s^a t^b & 0 \\ 0 & s^ct^d  \end{matrix} \right) \left(\begin{matrix}s^{c-a} & 0 \\ 0 & s^{a-c}  \end{matrix} \right)  A_{c-a,d-b}(z,w)\cdot v)\\
&=((sz,tw),\psi(s,t)A_{c-a,d-b}(z,w)\cdot v\\
&=(s,t)\cdot \Psi^{{\mathbf{PD}++}}_{a,b,c,d}((z,w),v)
\end{align*}
where we used Lemma \ref{lem:Aequiv} for the second, and Lemma  \ref{lem:equivH} for the third equality.
\end{proof}

Both copies of $D^4\times S^4$ contain $D^4\times \{(0,0,0,0,\pm 1)\}$ as $T^2$-invariant subspaces, which glue to two $T^2$-invariant copies of $S^4$. Hence the GKM fibration is of product type:
\begin{center}
		\begin{tikzpicture}

			\node (a) at (0,0)[circle,fill,inner sep=2pt] {};
			\node (b) at (6,0)[circle,fill,inner sep=2pt]{};
			\node (c) at (0,4)[circle,fill,inner sep=2pt]{};	
			\node (d) at (6,4)[circle,fill,inner sep=2pt]{};			
			
			\node at (-1,2) {$(a,b)$};
			\node at (+1,2) {$(c,d)$};
			\node at (5,2) {$(c,b)$};
			\node at (7,2) {$(a,d)$};
			\node at (3,5) {$(1,0)$};
			\node at (3,3) {$(0,1)$};
			\node at (3,1) {$(1,0)$};
			\node at (3,-1) {$(0,1)$};

			\draw [very thick](a) to[in=160, out=20] (b);
			\draw [very thick](a) to[in=200, out=-20] (b);
			\draw [very thick](c) to[in=160, out=20] (d);
			\draw [very thick](c) to[in=200, out=-20] (d);
			\draw [very thick](a) to[in=250, out=110] (c);
			\draw [very thick](a) to[in=290, out=70] (c);
			\draw [very thick](b) to[in=250, out=110] (d);
			\draw [very thick](b) to[in=290, out=70] (d);
			
		\end{tikzpicture}
	\end{center}

\subsection{The case {\textbf{PD}$--$}}
Although, as explained in Section \ref{sec:combinatorial}, the cases ${\mathbf{D}}++$ and ${\mathbf{D}}--$ are not to be distinguished, we describe another class of gluing maps that give the same graphs as those in the previous section. To distinguish them, we use superscripts ${\mathbf{PD}}--$. They will be used in the subsequent case for the definition of yet another class of gluing maps.

In this case the gluing map takes values in the second $\SU(2)$ factor of $\SU(2)\times \SU(2)/\ZZ_2 = \SO(4)\subset \SO(5)$. We consider the bundle
\[
M^{{\mathbf{PD}}--}_{a,b,c,d}\longrightarrow S^4
\]
defined by the gluing map
\[
\Psi^{{\mathbf{PD}}--}_{a,b,c,d}:S^3\times S^4\longrightarrow S^3\times S^4;\, ((z,w),v)\longmapsto ((z,w),({\mathrm{I}_2},A_{a+c,b+d}(z,w))\cdot v).
\]
On the second copy of $D^4\times S^3$ we consider the $T^2$-action 
\begin{equation}\label{eq:secondaction2}
(s,t)\cdot ((z,w),v) = ((sz,tw),\psi(s,t) v).
\end{equation}
where $\psi(s,t):T^2\to \U(2)\subset \SO(4)\subset \SO(5)$ is defined by
\[
\psi(s,t) = \left(\begin{matrix}s^{-c}t^b & 0 \\ 0 & s^{-a}t^d\end{matrix}\right).
\]

\begin{lem}\label{lem:equiv2}
$\Psi^{{\mathbf{PD}--}}_{a,b,c,d}$ is equivariant with respect to the actions \eqref{eq:firstaction} and \eqref{eq:secondaction2} on $S^3\times S^4$.
\end{lem}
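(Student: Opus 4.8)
The plan is to proceed exactly as in the proof of Lemma~\ref{lem:equiv1}, i.e.\ by a direct computation that chases an element $(s,t)\cdot((z,w),v)$ through the gluing map $\Psi^{{\mathbf{PD}--}}_{a,b,c,d}$ and rearranges it into the form dictated by the action \eqref{eq:secondaction2}. First I would write out $\Psi^{{\mathbf{PD}--}}_{a,b,c,d}((s,t)\cdot((z,w),v)) = ((sz,tw),({\mathrm{I}_2},A_{a+c,b+d}(sz,tw))\cdot \varphi(s,t)\, v)$, where the diagonal matrix $\varphi(s,t)=\mathrm{diag}(s^at^b,s^ct^d)$ sits in the \emph{left} $\SU(2)$ factor by Remark~\ref{rem: conventions}(iv). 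The key inputs are: Lemma~\ref{lem:Aequiv}, which gives $A_{a+c,b+d}(sz,tw)=(s^{a+c},t^{b+d})\cdot A_{a+c,b+d}(z,w)$ acting in the second $\SU(2)$ factor; and the fact that, inside $\SO(4)\cong \SU(2)\times\SU(2)/\ZZ_2$, the left and right factors commute with each other, so the left-factor element $\varphi(s,t)$ can be commuted past the right-factor element $({\mathrm{I}_2},A_{a+c,b+d}(z,w))$ freely.

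The heart of the calculation is to show that conjugating/absorbing the scalars produces exactly $\psi(s,t)=\mathrm{diag}(s^{-c}t^b,s^{-a}t^d)$ acting on the left. Concretely, the diagonal torus element $(s^{a+c},t^{b+d})$ acting in the \emph{second} $\SU(2)$ factor on $\HH=\CC^2$ corresponds, via quaternionic multiplication $h\mapsto vhw^{-1}$, to right multiplication; one then uses the quaternionic bookkeeping of Remark~\ref{rem: conventions}(iii)--(iv) (an element of the second factor acts on $\RR^4=\HH$ via $h\mapsto h\bar u$ for a unit quaternion $u$) together with an identity analogous to Lemma~\ref{lem:equivH} to rewrite the combined action of $\varphi(s,t)$ (left factor) and the second-factor torus element as the single diagonal matrix $\psi(s,t)$ acting on the left in $\U(2)\subset\SO(4)\subset\SO(5)$. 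After this one reads off
\[
\Psi^{{\mathbf{PD}--}}_{a,b,c,d}((s,t)\cdot((z,w),v)) = ((sz,tw),\psi(s,t)\cdot({\mathrm{I}_2},A_{a+c,b+d}(z,w))\cdot v) = (s,t)\cdot\Psi^{{\mathbf{PD}--}}_{a,b,c,d}((z,w),v),
\]
as desired.

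The main obstacle I anticipate is purely bookkeeping: keeping straight which $\SU(2)$ factor of $\SO(4)\cong\SU(2)\times\SU(2)/\ZZ_2$ each diagonal torus element lives in, and correctly translating between the ``left multiplication by a unit quaternion'' picture and the ``$2\times 2$ complex matrix'' picture when passing exponents through the $\ZZ_2$ quotient. In particular one must be careful that the exponents $-c,-a$ in $\psi$ (as opposed to $c,a$ in the \textbf{PD}$++$ case) arise precisely because the gluing now happens in the second factor, where the torus acts by the \emph{inverse} of the way it acts in the first factor under the identification $h\mapsto vhw^{-1}$; this is the sign flip recorded in \eqref{eq:secondaction2}. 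Everything else is a mechanical repetition of the computation in Lemma~\ref{lem:equiv1}, using Lemmas~\ref{lem:Aequiv} and~\ref{lem:equivH} (or its obvious second-factor analogue) at the appropriate steps.
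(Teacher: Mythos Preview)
Your overall strategy is sound, but there is a concrete mistake in a key step. You assert that $\varphi(s,t)=\mathrm{diag}(s^at^b,s^ct^d)$ ``sits in the left $\SU(2)$ factor by Remark~\ref{rem: conventions}(iv)'' and therefore commutes with $({\mathrm{I}_2},A_{a+c,b+d}(z,w))$. This is false: Remark~\ref{rem: conventions}(iv) only identifies $\SU(2)\subset\U(2)$ with the left factor, and in general $\det\varphi(s,t)=s^{a+c}t^{b+d}\neq 1$. Under the identification $\SO(4)\cong\SU(2)\times\SU(2)/\ZZ_2$ acting by $h\mapsto vhw^{-1}$, a diagonal $\U(2)$-element $\mathrm{diag}(\alpha,\beta)$ corresponds to a pair whose \emph{right} component squares to $(\alpha\beta)^{-1}$; only when $\alpha\beta=1$ is the right component trivial. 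So $\varphi(s,t)$ does \emph{not} commute with $({\mathrm{I}_2},A_{a+c,b+d}(z,w))$, and the commutation step as written breaks down.

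The approach can be repaired: decompose $\varphi(s,t)=(L,R)$ with both $L,R$ in the diagonal torus, observe that $\psi(s,t)=(L',R')$ has the same left component $L'=L$ (both square to $s^{a-c}t^{b-d}$), and then check that the right component $R$ exactly cancels against half of the torus factor arising from $A_{a+c,b+d}(sz,tw)=(s^{a+c},t^{b+d})\cdot A_{a+c,b+d}(z,w)$ once one rewrites the $T^2$-action on $\SU(2)$ as two-sided quaternionic multiplication. This works but forces you through square roots and the $\ZZ_2$ ambiguity, which is precisely the bookkeeping you anticipated.

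By contrast, the paper avoids the $\SU(2)\times\SU(2)$ decomposition entirely. It writes $v=(q,u)$ with $q=q_1+jq_2\in\HH$, expands $({\mathrm{I}_2},A_{a+c,b+d}(sz,tw))\cdot\varphi(s,t)\cdot(q,u)$ as the explicit quaternionic product
\[
(s^at^bq_1+js^ct^dq_2)\bigl(\overline{f_{a+c}(sz)}-jf_{b+d}(tw)\bigr),
\]
uses $f_n(sz)=s^nf_n(z)$, multiplies out using $jz=\bar z j$, and reads off the result as $\psi(s,t)$ applied to $({\mathrm{I}_2},A_{a+c,b+d}(z,w))\cdot(q,u)$. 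This is less conceptual than your intended argument but sidesteps the factor-decomposition issue and the $\ZZ_2$ quotient entirely.
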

\begin{proof}
We write an element of $S^4$ as $v = (q,u)$, for $q=q_1+jq_2\in \HH$ and $u\in \RR$. Then we compute
\begin{align*}
\Psi^{{\mathbf{PD}--}}_{a,b,c,d}((s,t)\cdot ((z,w),(q,u)) &= \Psi^{{\mathbf{PD}--}}_{a,b,c,d}((sz,tw),\varphi(s,t) (q,u))\\
&=((sz,tw),(({\mathrm{I}_2},A_{a+c,b+d}(sz,tw))(s^at^bq_1+js^ct^dq_2),u)\\
&=((sz,tw),((s^at^bq_1 + js^ct^dq_2)(\overline{f_{a+c}(sz)} - jf_{b+d}(tw)),u))\\
&=((sz,tw),((s^at^bq_1 + js^ct^dq_2)(s^{-a-c}\overline{f_{a+c}(z)} - jt^{b+d}f_{b+d}(w)),u))\\
&=((sz,tw),((s^{-c}t^bq_1\overline{f_{a+c}(z)}+s^{-c}t^b\overline{q_2}f_{b+d}(w) \\
&\qquad\qquad \qquad\qquad + j(s^{-a}t^d q_2\overline{f_{a+c}(z)} - s^{-a}t^d\overline{q_1} f_{b+d}(w)),u))\\
&= ((sz,tw),\psi(s,t)(({\mathrm{I}_2},A_{a+c,b+d}(z,w))q,u))\\
&=(s,t)\cdot \Psi^{{\mathrm{PD}}--}_{a,b,c,d}((z,w),(q,u)).
\end{align*}
\end{proof}
For the same reason as in the previous case, the GKM fibration is of product type:

\begin{center}
		\begin{tikzpicture}

			\node (a) at (0,0)[circle,fill,inner sep=2pt] {};
			\node (b) at (6,0)[circle,fill,inner sep=2pt]{};
			\node (c) at (0,4)[circle,fill,inner sep=2pt]{};	
			\node (d) at (6,4)[circle,fill,inner sep=2pt]{};			
			
			\node at (-1,2) {$(a,b)$};
			\node at (+1,2) {$(c,d)$};
			\node at (4.85,2) {$(-c,b)$};
			\node at (7.15,2) {$(-a,d)$};
			\node at (3,5) {$(1,0)$};
			\node at (3,3) {$(0,1)$};
			\node at (3,1) {$(1,0)$};
			\node at (3,-1) {$(0,1)$};

			\draw [very thick](a) to[in=160, out=20] (b);
			\draw [very thick](a) to[in=200, out=-20] (b);
			\draw [very thick](c) to[in=160, out=20] (d);
			\draw [very thick](c) to[in=200, out=-20] (d);
			\draw [very thick](a) to[in=250, out=110] (c);
			\draw [very thick](a) to[in=290, out=70] (c);
			\draw [very thick](b) to[in=250, out=110] (d);
			\draw [very thick](b) to[in=290, out=70] (d);
			
		\end{tikzpicture}
	\end{center}
As observed before, this (unsigned) graph is identical to that of $M^{\mathbf{PD}++}_{a,b,-c,-d}$ from the previous section.

\subsection{The case {\textbf{PA$--$}}}

We construct a bundle
\[
M^{{\mathbf{PA}}--}_{a,b,c,d}\longrightarrow S^4
\]
defined by the gluing map
\[
\Psi^{{\mathbf{PA}}--}_{a,b,c,d} = \Psi^{{\mathbf{PD}}--}_{c,b,a,d}\circ \Psi^{{\mathbf{PD}}++}_{a,b,c,d}:S^3\times S^4\longrightarrow S^3\times S^4.
\]
On the second copy of $D^4\times S^3$ we consider the $T^2$-action 
\begin{equation}\label{eq:secondaction3}
(s,t)\cdot ((z,w),v) = ((sz,tw),\psi(s,t) v).
\end{equation}
where $\psi(s,t):T^2\to \U(2)\subset \SO(4)\subset \SO(5)$ is defined by
\[
\psi(s,t) = \left(\begin{matrix}s^{-a}t^b & 0 \\ 0 & s^{-c}t^d\end{matrix}\right).
\]

\begin{lem}
$\Psi^{{\mathbf{PA}--}}_{a,b,c,d}$ is equivariant with respect to the actions \eqref{eq:firstaction} and \eqref{eq:secondaction3} on $S^3\times S^4$.
\end{lem}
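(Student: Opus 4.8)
The plan is to verify the equivariance of $\Psi^{\mathbf{PA}--}_{a,b,c,d}$ directly from its definition as the composition $\Psi^{\mathbf{PD}--}_{c,b,a,d}\circ \Psi^{\mathbf{PD}++}_{a,b,c,d}$, using the equivariance properties established in Lemmas \ref{lem:equiv1} and \ref{lem:equiv2}. The key point is to identify the three relevant $T^2$-actions on $S^3\times S^4$: the action \eqref{eq:firstaction} (call it the "source" action, with $\varphi(s,t)=\mathrm{diag}(s^at^b,s^ct^d)$), an intermediate action, and the target action \eqref{eq:secondaction3} with $\psi(s,t)=\mathrm{diag}(s^{-a}t^b,s^{-c}t^d)$. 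If the intermediate action is chosen correctly so that it is simultaneously the "target" action for $\Psi^{\mathbf{PD}++}_{a,b,c,d}$ and the "source" action for $\Psi^{\mathbf{PD}--}_{c,b,a,d}$, then equivariance of the composition follows formally.

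Concretely, I would first recall that by Lemma \ref{lem:equiv1}, $\Psi^{\mathbf{PD}++}_{a,b,c,d}$ intertwines the source action \eqref{eq:firstaction} with the action given by $\psi_1(s,t)=\mathrm{diag}(s^ct^b,s^at^d)$ (this is \eqref{eq:secondaction1}). Next, I would apply Lemma \ref{lem:equiv2} not with parameters $(a,b,c,d)$ but with $(c,b,a,d)$: this says $\Psi^{\mathbf{PD}--}_{c,b,a,d}$ intertwines the action with $\varphi$-part $\mathrm{diag}(s^ct^b,s^at^d)$ — which is exactly $\psi_1$ above — with the action whose $\psi$-part is $\mathrm{diag}(s^{-a}t^b,s^{-c}t^d)$, obtained by substituting $(a,b,c,d)\mapsto(c,b,a,d)$ into the formula $\psi(s,t)=\mathrm{diag}(s^{-c}t^b,s^{-a}t^d)$ from \eqref{eq:secondaction2}. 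This last action is precisely \eqref{eq:secondaction3}. Hence the composition is equivariant from \eqref{eq:firstaction} to \eqref{eq:secondaction3}, as claimed.

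The one subtlety I would be careful about — and the step I expect to require the most attention — is matching conventions: Lemma \ref{lem:equiv2} as stated applies to maps "on the second copy of $D^4\times S^3$" starting from action \eqref{eq:firstaction}, whose $\varphi$ uses the original labels $(a,b,c,d)$. When we feed it the parameters $(c,b,a,d)$, the relevant "first action" becomes the one with $\varphi$-part $\mathrm{diag}(s^ct^b,s^at^d)$, and one must check this genuinely coincides with the target action of $\Psi^{\mathbf{PD}++}_{a,b,c,d}$, namely \eqref{eq:secondaction1}. A quick inspection of the two diagonal matrices confirms they agree. It is also worth noting explicitly that composing two $T^2$-equivariant maps between actions that share a common middle action yields an equivariant map, which is immediate. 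I would therefore present the proof as a short two-line chain of equivariances referencing Lemmas \ref{lem:equiv1} and \ref{lem:equiv2}, with the parameter substitution spelled out, rather than repeating a long explicit matrix computation as in the proofs of those lemmas.
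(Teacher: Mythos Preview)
Your proposal is correct and follows exactly the paper's approach: the paper's proof is the single sentence ``This follows immediately from Lemmas \ref{lem:equiv1} and \ref{lem:equiv2},'' and you have simply made explicit the parameter substitution $(a,b,c,d)\mapsto(c,b,a,d)$ and the matching of the intermediate action that this sentence leaves implicit.
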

\begin{proof}
This follows immediately from Lemmas \ref{lem:equiv1} and \ref{lem:equiv2}.
\end{proof}
Again, the GKM fibration is of product type:

\begin{center}
		\begin{tikzpicture}

			\node (a) at (0,0)[circle,fill,inner sep=2pt] {};
			\node (b) at (6,0)[circle,fill,inner sep=2pt]{};
			\node (c) at (0,4)[circle,fill,inner sep=2pt]{};	
			\node (d) at (6,4)[circle,fill,inner sep=2pt]{};			
			
			\node at (-1,2) {$(a,b)$};
			\node at (+1,2) {$(c,d)$};
			\node at (4.85,2) {$(-a,b)$};
			\node at (7.15,2) {$(-c,d)$};
			\node at (3,5) {$(1,0)$};
			\node at (3,3) {$(0,1)$};
			\node at (3,1) {$(1,0)$};
			\node at (3,-1) {$(0,1)$};

			\draw [very thick](a) to[in=160, out=20] (b);
			\draw [very thick](a) to[in=200, out=-20] (b);
			\draw [very thick](c) to[in=160, out=20] (d);
			\draw [very thick](c) to[in=200, out=-20] (d);
			\draw [very thick](a) to[in=250, out=110] (c);
			\draw [very thick](a) to[in=290, out=70] (c);
			\draw [very thick](b) to[in=250, out=110] (d);
			\draw [very thick](b) to[in=290, out=70] (d);
			
		\end{tikzpicture}
	\end{center}

\subsection{The case {\textbf{TA}$+-$}}
For this case, we consider $\rho:\SU(2)\to \SO(3)$ the standard $2:1$ covering homomorphism, such that the standard maximal torus (the diagonal circle) is sent to the $\U(1)=\SO(2)$ embedded in the upper left, via \[\rho\left(\begin{matrix}e^{i\alpha} & 0 \\ 0 & e^{-i\alpha}\end{matrix}\right) = \left(\begin{matrix}\cos (2\alpha) & -\sin (2\alpha) &0\\ \sin(2\alpha) & \cos(2\alpha)& 0\\ 0& 0& 1\end{matrix}\right).\] We consider $\rho:\SU(2)\to \SO(3)\subset \SO(5)$, with the image embedded in the lower right corner.

\begin{rem}\label{rem:twistexplicit}
For later purpose we give an explicit description of $\rho$. We write \[\left(\begin{matrix}z & -\overline{w}\\ w & \overline{z} \end{matrix}\right)=z+jw\] as before and identify $\RR^3$ with the imaginary quaternions using the basis $j,k,i$ (in this order). Then the maximal torus corresponds to elements $z+j\cdot 0$ and $\rho$ is given by letting a unit quaternion $q$ act on an imaginary quaternion $h$ as $qhq^{-1}$. We note that $\rho(z+j\cdot 0)$, when considered as an element of $\SO(5)$ as described above, has $1$ as the bottom right entry while the same entry of $\rho(0+jz)$ is $-1$.
\end{rem}

Now we construct a bundle
\[
M^{{\textbf{TA}}+-}_{a,b,c,d}\longrightarrow S^4
\]
defined by the gluing map
\[
\Psi^{{\textbf{TA}}+-}_{a,b,c,d}:S^3\times S^4\longrightarrow S^3\times S^4;\, ((z,w),v) \longmapsto ((z,w),\rho(A_{-c,-d}(z,w))v).
\]
On the second copy of $D^4\times S^3$ we consider the $T^2$-action 
\begin{equation}\label{eq:secondaction4}
(s,t)\cdot ((z,w),v) = ((sz,tw),\psi(s,t) v).
\end{equation}
where $\psi(s,t):T^2\to \U(2)\subset \SO(4)\subset \SO(5)$ is defined by
\[
\psi(s,t) = \left(\begin{matrix}s^{a}t^b & 0 \\ 0 & s^{-c}t^d\end{matrix}\right).
\]
\begin{lem}\label{lem:equiv4}
$\Psi^{{\mathbf{TA}+-}}_{a,b,c,d}$ is equivariant with respect to the actions \eqref{eq:firstaction} and \eqref{eq:secondaction4} on $S^3\times S^4$.
\end{lem}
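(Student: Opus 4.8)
The plan is to verify the equivariance of $\Psi^{\mathbf{TA}+-}_{a,b,c,d}$ by a direct computation, following the same pattern as in Lemmas \ref{lem:equiv1} and \ref{lem:equiv2}. The key input is the equivariance property of $A_{-c,-d}$ from Lemma \ref{lem:Aequiv}, namely $A_{-c,-d}((s,t)\cdot(z,w)) = (s^{-c},t^{-d})\cdot A_{-c,-d}(z,w)$, together with an understanding of how $\rho$ interacts with the torus action. Concretely, starting from $\Psi^{\mathbf{TA}+-}_{a,b,c,d}((s,t)\cdot((z,w),v))$, I would first push the $T^2$-action through $A_{-c,-d}$ using Lemma \ref{lem:Aequiv}, yielding a factor $(s^{-c},t^{-d})\cdot A_{-c,-d}(z,w)$ inside $\rho$, and then use that $\rho$ is a homomorphism to split off $\rho\left(\begin{smallmatrix}s^{-c} & 0\\ 0 & t^{-d}\end{smallmatrix}\right)$ — wait, this is not diagonal in $\SU(2)$. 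The element $(s^{-c},t^{-d})$ acting on $\SU(2)$ via the standard $T^2$-action sends $B$ to $\left(\begin{smallmatrix}s^{-c} & 0\\ 0 & \overline{s^{-c}}\end{smallmatrix}\right)B$ composed appropriately; I need to be careful here since the $T^2$-action on $\SU(2)$ in Lemma \ref{lem:Aequiv} is not by conjugation.

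The cleaner route, which I expect the paper to take, is the following. Under the identification $S^3 \cong \SU(2)$, the relevant $T^2$-action reads $(s,t)\cdot B = \left(\begin{smallmatrix} s & 0 \\ 0 & \overline{t}\end{smallmatrix}\right) \cdot B$ on the left (from the displayed formula before Lemma \ref{lem:Aequiv}). So $\rho(A_{-c,-d}((s,t)\cdot(z,w))) = \rho\big(\left(\begin{smallmatrix} s^{-c} & 0 \\ 0 & \overline{t^{-d}}\end{smallmatrix}\right) A_{-c,-d}(z,w)\big)$. Using multiplicativity of $\rho$ and the explicit formula from Remark \ref{rem:twistexplicit} — which gives $\rho\left(\begin{smallmatrix} e^{i\alpha} & 0 \\ 0 & e^{-i\alpha}\end{smallmatrix}\right)$ as a rotation by $2\alpha$ in the first two coordinates of $\RR^3 \subset \RR^5$ — I need to reconcile the mismatch between $s^{-c}$ (with $s$) and $\overline{t^{-d}}$ (with $\overline{t}$). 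This suggests rewriting $\left(\begin{smallmatrix} s^{-c} & 0 \\ 0 & \overline{t^{-d}}\end{smallmatrix}\right) = \left(\begin{smallmatrix} (s\overline{t})^{-c} & 0 \\ 0 & \overline{(s\overline{t})^{-c}}\end{smallmatrix}\right)\cdot\left(\begin{smallmatrix} t^{-c} & 0 \\ 0 & \overline{t^{-d}}\end{smallmatrix}\right)$... and then absorbing the scalar pieces. Honestly, the cleanest approach is to just compute $\rho(A_{-c,-d}(sz,tw))$ entrywise using $f_n(sz) = s^n f_n(z)$ and the description of $\rho$ as $q \mapsto (h\mapsto qhq^{-1})$, then compare with $\psi(s,t)\rho(A_{-c,-d}(z,w))\psi(s,t)^{-1}$ acting appropriately — but since $\psi(s,t)$ is diagonal and $\rho$-image sits in the lower block whereas $\psi(s,t)$ has entries $s^a t^b, s^{-c}t^d$, the interaction is through the $\SO(5)$ block structure.

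The main obstacle will be bookkeeping the block structure of $\SO(5) \supset \SO(4) \supset \U(2)$ versus the embedding $\SO(3) \hookrightarrow \SO(5)$ in the lower right via $\rho$, and verifying that the specific choice of exponents in $\psi(s,t) = \left(\begin{smallmatrix} s^a t^b & 0 \\ 0 & s^{-c}t^d\end{smallmatrix}\right)$ is exactly what is needed for the two torus actions to intertwine through $\rho\circ A_{-c,-d}$. I would proceed by: (i) writing out $\rho(A_{-c,-d}(sz,tw))$ using the explicit quaternionic conjugation description from Remark \ref{rem:twistexplicit}, extracting the $s,t$ dependence via the identities $f_{-c}(sz) = s^{-c}f_{-c}(z)$ and $f_{-d}(tw) = t^{-d}f_{-d}(w)$; (ii) identifying the resulting prefactor as $\psi(s,t)$ composed with $\rho(A_{-c,-d}(z,w))$ composed with $\psi(s,t)^{-1}$ inside $\SO(5)$, using that conjugation by the diagonal $\psi(s,t)$ acts on the $\rho$-image — which lives in the coordinates identified with $j,k,i$ in Remark \ref{rem:twistexplicit} — in a way governed precisely by the weights $a,-c$ (the $+-$ in the case name reflecting that the second fiber weight acquires a sign); and (iii) concluding that $\Psi^{\mathbf{TA}+-}_{a,b,c,d}((s,t)\cdot x) = (s,t)\cdot \Psi^{\mathbf{TA}+-}_{a,b,c,d}(x)$. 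I expect step (ii) — correctly tracking how conjugation by $\psi(s,t)$ acts on the lower $\SO(3)$-block after the coordinate reshuffling of Remark \ref{rem:twistexplicit} — to be the delicate point, and the one that explains why this construction produces the twisted ($\mathbf{T}$) rather than product graph, since the $\rho$-twisting is what forces the horizontal edges to swap the fiber vertices.
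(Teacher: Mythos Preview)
Your overall strategy---push the $T^2$-action through $A_{-c,-d}$ via Lemma \ref{lem:Aequiv}, then analyse how $\rho$ interacts with the torus---is the same as the paper's. However, your sketch has one genuine slip and misses the paper's main simplifying idea.

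The slip is in step (ii): the relation you must verify is \emph{not} conjugation by $\psi(s,t)$. Unwinding $\Psi((s,t)\cdot((z,w),v)) = (s,t)\cdot\Psi((z,w),v)$ gives
\[
\rho\bigl(A_{-c,-d}(sz,tw)\bigr)\,\varphi(s,t) \;=\; \psi(s,t)\,\rho\bigl(A_{-c,-d}(z,w)\bigr)
\]
in $\SO(5)$, so the left and right torus factors are $\varphi$ and $\psi$, not the same. Your brute-force quaternionic plan could still be executed against this corrected target, but the bookkeeping through the $j,k,i$ basis of Remark \ref{rem:twistexplicit} would be heavy.

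The paper avoids this by a trick you do not mention. After using Lemma \ref{lem:Aequiv} and commuting $\varphi(s,t)$ to the left (which leaves a conjugation of $\rho(\cdots)$ by $\mathrm{diag}(1,s^{\pm c}t^{\pm d},1)$ in the $\U(2)\times\{1\}$ block), it exploits that $\rho$ is a $2{:}1$ cover: the conjugating diagonal matrix is rewritten as $\rho\left(\begin{smallmatrix}s^{-c/2}t^{-d/2} & 0\\ 0 & s^{c/2}t^{d/2}\end{smallmatrix}\right)$, so multiplicativity of $\rho$ pushes the conjugation \emph{inside} $\SU(2)$. Lemma \ref{lem:equivH} then converts that $\SU(2)$-conjugation back into the $T^2$-action, and the $t$-dependence cancels exactly, leaving $(s^{-c},s^c)\cdot A_{-c,-d}(z,w)$. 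The residual factor $\rho\bigl(\mathrm{diag}(s^{-c},s^c)\bigr) = \mathrm{diag}(1,s^{-2c},1)$ combines with $\varphi(s,t)$ to give $\psi(s,t)$. This half-power-plus-Lemma-\ref{lem:equivH} manoeuvre is what reduces the proof to a short chain of equalities; it is the main idea missing from your outline.
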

\begin{proof} In the ensuing computation we write matrices in $\SO(5)$ as $\U(2)\times \{1\}$ block matrices (see Remark \ref{rem: conventions} (i)) whenever possible. Note that while the image of $\rho$ is not in general of this form, it is on the maximal torus of $\SU(2)$. In $\SO(5)$ we compute
\begin{align*}
(\rho&\circ  A_{-c,-d})(sz,tw)\cdot \varphi(s,t)\\
&= \rho((s^{-c},t^{-d})\cdot A_{-c,-d}(z,w))\cdot \left(\begin{matrix}
s^at^b & 0 & 0 \\ 0 & s^ct^d & 0 \\ 0 & 0 & 1
\end{matrix} \right)\\
&=\left(\begin{matrix}
s^at^b & 0 & 0 \\ 0 & s^ct^d & 0 \\ 0 & 0 & 1
\end{matrix} \right)\cdot \left(\begin{matrix}
1 & 0 & 0 \\ 0 & s^{-c}t^{-d} & 0 \\ 0 & 0 & 1
\end{matrix} \right)\rho([(s^{-c},t^{-d})\cdot A_{-c,-d}(z,w)])\cdot \left(\begin{matrix}
1 & 0 & 0 \\ 0 & s^ct^d & 0 \\ 0 & 0 & 1
\end{matrix} \right)\\
&= \left(\begin{matrix}
s^at^b & 0 & 0 \\ 0 & s^ct^d & 0 \\ 0 & 0 & 1
\end{matrix} \right)\rho\left(\left(\begin{matrix}s^{-\frac{c}{2}}t^{-\frac{d}{2}} & 0 \\ 0 & s^{\frac{c}{2}}t^{\frac{d}{2}}\end{matrix}\right)\cdot [(s^{-c},t^{-d})\cdot A_{-c,-d}(z,w)]\cdot \left(\begin{matrix}s^{\frac{c}{2}}t^{\frac{d}{2}} & 0 \\ 0 & s^{-\frac{c}{2}}t^{-\frac{d}{2}}\end{matrix}\right)\right)\cdot v\\
&=\left(\begin{matrix}
s^at^b & 0 & 0 \\ 0 & s^ct^d & 0 \\ 0 & 0 & 1
\end{matrix} \right)\rho([(s^{-c},s^ct^dt^{-d})\cdot A_{-c,-d}(z,w)]) \\
&= \left(\begin{matrix}
s^at^b & 0 & 0 \\ 0 & s^ct^d & 0 \\ 0 & 0 & 1
\end{matrix} \right)\cdot\rho \left(\left(\begin{matrix}s^{-c} & 0 \\ 0 & s^{c}\end{matrix}\right)\right)\cdot \rho(A_{-c,-d}(z,w))\\
&= \left(\begin{matrix}
s^at^b & 0 & 0 \\ 0 & s^ct^d & 0 \\ 0 & 0 & 1
\end{matrix} \right)\cdot \left(\begin{matrix}
1 & 0 & 0 \\ 0 & s^{-2c} & 0 \\ 0 & 0 & 1
\end{matrix} \right)\cdot \rho(A_{-c,-d}(z,w))\\
&=\left(\begin{matrix}
s^at^b & 0 & 0 \\ 0 & s^{-c}t^d & 0 \\ 0 & 0 & 1
\end{matrix} \right) \cdot \rho(A_{-c,-d}(z,w))
\end{align*}
where we used Lemma \ref{lem:equivH} for the fourth equality, and hence
\begin{align*}
\Psi^{{\mathbf{TA}--}}_{a,b,c,d}((s,t)&\cdot ((z,w),v)) = \Psi^{{\mathbf{TA}--}}_{a,b,c,d}((sz,tw),\varphi(s,t)\cdot v)\\
&= ((sz,tw),(\rho\circ  A_{-c,-d})(sz,tw)\cdot \varphi(s,t)\cdot v)\\
&=((sz,tw),\psi(s,t)\cdot \rho(A_{-c,-d}(z,w))\cdot v) = (s,t)\cdot \Psi^{{\mathbf{TA}--}}_{a,b,c,d}((z,w), v).
\end{align*}
\end{proof}

The GKM graph of $M^{{\mathbf{TA}}+-}_{a,b,c,d}$ is of twisted type, because by Proposition \ref{prop:combinatorialclassification} a graph of type {\textbf{PA$+-$}} is not orientable and hence impossible to appear as the GKM graph of a GKM manifold. To see this explicitly we observe that the two invariant spheres in the base sphere intersect the equator $S^3\cong \SU(2)$ in two circles $C_1$ and $C_2$ given by unit quaternions of the form $z+j\cdot 0$ and $0+j\cdot z$ for $z\in S^1$. Then by Remark \ref{rem:twistexplicit} we see that the points $(0,\ldots,0,\pm 1)$ of the fiber $S^4$ get swapped under the gluing map over points in $C_2$ but are fixed over points in $C_1$. Consequently, when following the two invariant horizontal spheres emanating from a fixed point of the total space one arrives at two distinct fixed points.
\begin{center}
		\begin{tikzpicture}

			\node (a) at (0,0)[circle,fill,inner sep=2pt] {};
			\node (b) at (6,0)[circle,fill,inner sep=2pt]{};
			\node (c) at (0,4)[circle,fill,inner sep=2pt]{};	
			\node (d) at (6,4)[circle,fill,inner sep=2pt]{};			
			
			\node at (-1,2) {$(a,b)$};
			\node at (+1,2) {$(c,d)$};
			\node at (5,2) {$(a,b)$};
			\node at (7.2,2) {$(-c,d)$};
			\node at (3,4.4) {$(1,0)$};
			\node at (2.6,3) {$(0,1)$};
			\node at (3.4,1) {$(1,0)$};
			\node at (3,-.4) {$(0,1)$};

			\draw [very thick](a) to (b);
			\draw [very thick](a) to (d);
			\draw [very thick](c) to (d);
			\draw [very thick](c) to (b);
			\draw [very thick](a) to[in=250, out=110] (c);
			\draw [very thick](a) to[in=290, out=70] (c);
			\draw [very thick](b) to[in=250, out=110] (d);
			\draw [very thick](b) to[in=290, out=70] (d);
			
		\end{tikzpicture}
	\end{center}

\subsection{The case {\textbf{TD}$+-$}}
Finally, we construct a bundle
\[
M^{{\mathbf{TD}}+-}_{a,b,c,d}\longrightarrow S^4
\]
via the gluing map
\[
\Psi^{{\mathbf{TD}}+-}_{a,b,c,d} = \Psi^{{\mathbf{PD}}++}_{a,b,-c,d}  \circ \Psi^{{\mathbf{TA}}+-}_{a,b,c,d}:S^3\times S^4\longrightarrow S^3\times S^4.
\]
On the second copy of $D^4\times S^3$ we consider the $T^2$-action 
\begin{equation}\label{eq:secondaction5}
(s,t)\cdot ((z,w),v) = ((sz,tw),\psi(s,t) v).
\end{equation}
where $\psi(s,t):T^2\to \U(2)\subset \SO(4)\subset \SO(5)$ is defined by
\[
\psi(s,t) = \left(\begin{matrix}s^{-c}t^b & 0 \\ 0 & s^{a}t^d\end{matrix}\right).
\]

\begin{lem}
$\Psi^{{\mathbf{TD}+-}}_{a,b,c,d}$ is equivariant with respect to the actions \eqref{eq:firstaction} and \eqref{eq:secondaction5} on $S^3\times S^4$.
\end{lem}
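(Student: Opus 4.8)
\emph{Proof proposal.} The plan is to deduce the claim formally from the two equivariance lemmas already established, by checking that the intermediate $T^2$-actions match up. Recall that, by definition, $\Psi^{{\mathbf{TD}}+-}_{a,b,c,d}=\Psi^{{\mathbf{PD}}++}_{a,b,-c,d}\circ\Psi^{{\mathbf{TA}}+-}_{a,b,c,d}$, so it suffices to track a single $T^2$-action through the two composition factors.

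First I would record, via Lemma \ref{lem:equiv4}, that $\Psi^{{\mathbf{TA}}+-}_{a,b,c,d}$ intertwines the action \eqref{eq:firstaction} on the first copy of $S^3\times S^4$ with the action $(s,t)\cdot((z,w),v)=((sz,tw),\mathrm{diag}(s^{a}t^{b},s^{-c}t^{d})\,v)$ on the intermediate copy. Second, I would invoke Lemma \ref{lem:equiv1} with the parameter substitution $(a,b,c,d)\mapsto(a,b,-c,d)$: this shows that $\Psi^{{\mathbf{PD}}++}_{a,b,-c,d}$ intertwines the action \eqref{eq:firstaction} with parameters $(a,b,-c,d)$ --- which is precisely $(s,t)\cdot((z,w),v)=((sz,tw),\mathrm{diag}(s^{a}t^{b},s^{-c}t^{d})\,v)$ --- with the action $(s,t)\cdot((z,w),v)=((sz,tw),\mathrm{diag}(s^{-c}t^{b},s^{a}t^{d})\,v)$, that is, with the action \eqref{eq:secondaction5}. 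The key observation that makes the two factors fit together is that the \emph{target} action of $\Psi^{{\mathbf{TA}}+-}_{a,b,c,d}$ coincides verbatim with the \emph{source} action of $\Psi^{{\mathbf{PD}}++}_{a,b,-c,d}$; this is exactly what the substitution $c\mapsto -c$ in the first index block of $\Psi^{{\mathbf{PD}}++}$ is designed to achieve. Composing the two intertwining maps then gives that $\Psi^{{\mathbf{TD}}+-}_{a,b,c,d}$ is equivariant from \eqref{eq:firstaction} to \eqref{eq:secondaction5}.

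I do not expect a genuine obstacle: the content is entirely the bookkeeping of matching the diagonal homomorphisms $\varphi,\psi$ across the composition, together with the remark that Lemmas \ref{lem:equiv4} and \ref{lem:equiv1} hold for arbitrary integer parameters (the linear-independence conditions of Remark \ref{rem: labels explicit} are needed only to ensure that the resulting GKM graph is a bona fide GKM fibration, not for the equivariance statement itself). One minor point worth spelling out is that although the image of $\rho$ occurring in $\Psi^{{\mathbf{TA}}+-}$ is not block-diagonal in $\SO(5)$, this is irrelevant here: we only chain two equivariance identities for maps $S^{3}\times S^{4}\to S^{3}\times S^{4}$, which is legitimate regardless of the internal form of either map.
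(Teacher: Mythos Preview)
Your proposal is correct and follows exactly the paper's approach: the paper's proof is the single sentence ``This follows immediately from Lemmas \ref{lem:equiv1} and \ref{lem:equiv4},'' and you have simply spelled out the bookkeeping that makes the composition work, namely that the target action of $\Psi^{{\mathbf{TA}}+-}_{a,b,c,d}$ (action \eqref{eq:secondaction4}) coincides with the source action \eqref{eq:firstaction} for $\Psi^{{\mathbf{PD}}++}_{a,b,-c,d}$.
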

\begin{proof}
This follows immediately from Lemmas \ref{lem:equiv1} and \ref{lem:equiv4}.
\end{proof}

As in the previous case, the GKM graph is necessarily of twisted type.
\begin{center}
		\begin{tikzpicture}

			\node (a) at (0,0)[circle,fill,inner sep=2pt] {};
			\node (b) at (6,0)[circle,fill,inner sep=2pt]{};
			\node (c) at (0,4)[circle,fill,inner sep=2pt]{};	
			\node (d) at (6,4)[circle,fill,inner sep=2pt]{};			
			
			\node at (-1,2) {$(a,b)$};
			\node at (+1,2) {$(c,d)$};
			\node at (7,2) {$(a,d)$};
			\node at (4.8,2) {$(-c,b)$};
			\node at (3,4.4) {$(1,0)$};
			\node at (2.6,3) {$(0,1)$};
			\node at (3.4,1) {$(1,0)$};
			\node at (3,-.4) {$(0,1)$};

			\draw [very thick](a) to (b);
			\draw [very thick](a) to (d);
			\draw [very thick](c) to (d);
			\draw [very thick](c) to (b);
			\draw [very thick](a) to[in=250, out=110] (c);
			\draw [very thick](a) to[in=290, out=70] (c);
			\draw [very thick](b) to[in=250, out=110] (d);
			\draw [very thick](b) to[in=290, out=70] (d);
			
		\end{tikzpicture}
	\end{center}

\section{Topological classification} \label{sec:topclassification}

As described in Section \ref{SubSec: Sphere bundles}, any element $\varphi\in \pi_3(\SO(5))$ defines, by clutching, a smooth $S^4$-bundle $M_\varphi\to S^4$ over $S^4$. The nonequivariant homotopy types, homeomorphism types, and diffeomorphism types of the $M_\varphi$ are well-understood in terms of the clutching element $\varphi$.  We may also speak about the clutching \emph{number} of such $S^4$-bundles, using that $\pi_3(\SO(5))\cong \ZZ$. In the following, we do not distinguish any of the two generators of $\pi_3(\SO(5))$, hence our clutching numbers are well-defined only up to sign. It was shown by James--Whitehead \cite[p.\ 217]{JamesWhitehead} that the total space $M_\varphi$ is homotopy equivalent to $M_\psi$ if and only if
\[\varphi=\pm \psi\mod 24,\]
hence leaving us with $13$ distinct homotopy types. Furthermore one has

\begin{lem}\label{lem:homeoclutching}
The total spaces of the $S^4$-bundles $M_\varphi$ and $M_\psi$ are homeomorphic if and only if $\varphi=\pm \psi$. In this case they are also diffeomorphic.
\end{lem}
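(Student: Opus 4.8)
The plan is to combine two classical inputs: first, Pontryagin-class considerations to get a homeomorphism obstruction, and second, the classification of such bundles via surgery/handle theory (Crowley--Escher, Tamura, etc.) to upgrade a coincidence of clutching numbers all the way to a diffeomorphism. First I would observe that by Remark \ref{rem: first PC for sphere bundle} and Corollary \ref{C: clutching function pontryagin}, the first Pontryagin class of $M_{l\cdot\chi_1}$ equals $-2l\cdot\pi^*(\iota)$, so $p_1(M_\varphi)$, regarded via the canonical splitting of $H^4(M_\varphi)$, records $|\varphi|$ up to sign. Since $p_1$ is a homeomorphism invariant by Novikov's theorem on the topological invariance of rational Pontryagin classes, a homeomorphism $M_\varphi\cong M_\psi$ would force $\varphi=\pm\psi$. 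The one subtlety here is that the splitting of $H^4$ is not canonical — one must phrase the invariant intrinsically, e.g.\ as the divisibility of $p_1(M_\varphi)$ in the subgroup of $H^4$ on which the cup-square vanishes appropriately, or simply note that $p_1$ together with the ring structure from Corollary \ref{T: cohomology rings of S_l} pins down $|\varphi|$; this is the step where care is needed.

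For the converse — that $\varphi=\pm\psi$ implies $M_\varphi$ and $M_\psi$ are diffeomorphic — the cleanest route is the observation that the two generators of $\pi_3(\SO(5))$ are interchanged by an orientation-reversing self-diffeomorphism of $S^4$ (or of the fiber), so $M_\varphi\cong M_{-\varphi}$ always; hence it suffices to treat $\varphi=\psi$, which is trivially a diffeomorphism. Concretely, composing the clutching map $\chi\colon S^3\to\SO(5)$ with an orientation-reversing diffeomorphism of the base disc (equivalently, precomposing with an orientation-reversing self-map of the equatorial $S^3$) sends $\varphi\mapsto-\varphi$ on $\pi_3(\SO(5))$ while producing a diffeomorphic total space. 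This makes both the "homeomorphic implies $\varphi=\pm\psi$" and the "$\varphi=\pm\psi$ implies diffeomorphic" directions clean.

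I expect the main obstacle to be purely expository: making precise the sense in which $p_1$ is ``$-2\varphi$'' as a homeomorphism invariant, since $H^4(M_\varphi)\cong\ZZ^2$ has no preferred basis and changing basis by $x\mapsto x+ky$ changes the coefficient of $p_1$. The fix is to work with the image of $p_1(M_\varphi)$ in the quotient $H^4(M_\varphi)/\langle y\rangle$ where $y$ is characterized intrinsically (up to sign) as the unique primitive class with $y^2=0$ that is not of the form appearing in the ring relation — or, more robustly, to invoke that $p_1\bmod$ (indeterminacy) together with the ring isomorphism type is a homeomorphism invariant detecting $|\varphi|$ exactly. Once this bookkeeping is set up, both directions follow formally, and the diffeomorphism statement in the second sentence is immediate from the $\varphi\mapsto-\varphi$ symmetry.
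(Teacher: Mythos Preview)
Your proposal is essentially the paper's proof: the paper likewise uses an orientation-reversing self-diffeomorphism of $S^4$ (explicitly $(z_1,z_2)\mapsto(z_1,\overline{z_2})$ on each hemisphere) for the $M_\varphi\cong M_{-\varphi}$ direction, and Novikov's theorem together with Corollary~\ref{C: clutching function pontryagin} for the converse. The basis subtlety you flag is real but resolves more simply than you suggest: since $p_1(M_\varphi)=-2\varphi\cdot\pi^*(\iota)$ with $\pi^*(\iota)$ primitive in the free abelian group $H^4(M_\varphi)\cong\ZZ^2$, the divisibility of $p_1$ in $H^4$ alone (no ring structure needed) recovers $|2\varphi|$, and this is preserved by any isomorphism of $H^4$ as an abelian group.
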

\begin{proof}
If $\psi=-\varphi$, consider the diffeomorphism $g:S^4\to S^4$ which, on each hemisphere $D^4$, is given by $(z_1,z_2)\mapsto (z_1,\overline{z_2})$. Then the pullback $g^*M_\varphi$ has clutching function $a\circ g|_{S^3}=-\varphi$. Hence $M_\varphi$ and $ M_{-\varphi}$ are diffeomorphic.

Conversely any homeomorphism $h\colon M_\varphi\rightarrow M_\psi$ satisfies $h^*(p_1(M_\psi))=p_1(M_\varphi)$ on the rational Pontryagin classes by \cite{Novikov}. Since the integral cohomology is torsion free the identity holds for integral Pontryagin classes as well. Then $\varphi=\pm \psi$ follows from Corollary \ref{C: clutching function pontryagin}.
\end{proof}

\begin{thm} \label{thm: clutching numbers}
The clutching numbers (up to sign) of the $T^2$-equivariant $S^4$-bundles over $S^4$ constructed in Section \ref{sec:construction} are given in the following table.
\begin{center}
\begin{tabular}{|c|c|}
\hline
& clutching number \\
\hline
$M^{{\mathbf{PA}}++}_{a,b,c,d}$ & $0$ \\
\hline
$M^{{\mathbf{PD}}++}_{a,b,c,d}$ & $\pm (a-c)(b-d)$ \\
\hline
$M^{{\mathbf{PA}}--}_{a,b,c,d}$ & $\pm 2(ad+bc)$\\
\hline
$M^{{\mathbf{TA}}+-}_{a,b,c,d}$ & $\pm 2cd$ \\
\hline
$M^{{\mathbf{TD}}+-}_{a,b,c,d}$ & $\pm (a(b-d) + c(b+d))$ \\
\hline
\end{tabular}
\end{center}
In particular, two of the corresponding total spaces are non-equivariantly homeomorphic (and diffeomorphic) if and only if the above numbers agree up to sign. They are homotopy equivalent if and only if the numbers are equal modulo $24$ and sign.
\end{thm}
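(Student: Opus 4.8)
The plan is to compute, for each of the five explicitly constructed families of bundles, the first equivariant Pontryagin class of the total space via the GKM description, then restrict to ordinary cohomology to extract the clutching number. By Corollary \ref{C: clutching function pontryagin} the clutching number $l$ of a bundle $M_l\to S^4$ is characterized (up to sign) by $p_1(M_l)=-2l\cdot \pi^*(\iota)$, so it suffices to identify $p_1$ of each total space as an element of $H^4(S^4)\cong \ZZ$ sitting inside $H^4$ of the total space via $\pi^*$. By Remark \ref{rem: first PC for sphere bundle}, $p_1$ of the total space is $\pi^*(p_1(E))$ where $E$ is the underlying rank-$5$ bundle; so the computation reduces to finding $p_1(E)$, a multiple of $\iota$.

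First I would set up the GKM-theoretic computation of $p_1^T(M)$. By Proposition \ref{P: Computation PC}, at each of the four fixed points of the total space the equivariant total Pontryagin class is the product $\prod_\alpha(1+\alpha^2)$ over the four weights (two vertical, two horizontal) at that fixed point; these weights are exactly the labels read off from the GKM graphs drawn in Section \ref{sec:construction} for each case. Thus $p_1^T(M)\in H^*_T(M^T)$ is the tuple whose entry at a fixed point $p$ is $\sum_\alpha \alpha^2$. The key point is then to understand which such tuples actually lie in the image of $H^*_T(M)\hookrightarrow H^*_T(M^T)$ and, more specifically, how to read off the ordinary class: one uses the fiber bundle structure $\pi:M\to S^4$, under which $p_1^T(M)=\pi^*(p_1^T(E))$ for the equivariant rank-$5$ bundle $E$, and $p_1^T(E)$ is pulled back from $H^*_T(S^4)$. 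Concretely, $H^*_T(S^4)$ has a basis adapted to its two fixed points, and $p_1^T(E)$ is determined by its two values there, which in turn are $\sum(\text{horizontal weight})^2 - \sum(\text{vertical weight})^2$ evaluated using the decomposition $TM=\pi^*TS^4\oplus \pi^*E\ominus\varepsilon^1$. The non-equivariant clutching number is then obtained by passing to the quotient $H^*_T(\cdot)\to H^*(\cdot)$, i.e.\ essentially by taking a suitable difference of the two fixed-point values in $H^*_T(S^4)$ and comparing with $-2\iota$.

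Carrying this out case by case: for $M^{\mathbf{PA}++}_{a,b,c,d}$ the bundle is a product, so $E$ is trivial and the clutching number is $0$. For the other four families the horizontal weights in the two fibers differ, and the difference of the fixed-point contributions $\sum\alpha_{\text{hor}}^2$ over the base produces the stated quadratic expressions: e.g.\ for $\mathbf{PD}++$ one gets (up to sign and the factor coming from $\beta$) $a^2+c^2-(c^2+b^2)-\ldots$ telescoping to $(a-c)(b-d)$ after accounting for the relevant weight being $\iota$-valued; for $\mathbf{TA}+-$ the twist by $\rho\circ A_{-c,-d}$ introduces the factor $2$ and the expression $2cd$; similarly for $\mathbf{PA}--$ one obtains $2(ad+bc)$ and for $\mathbf{TD}+-$ the expression $a(b-d)+c(b+d)$, the latter two being computed most easily by using that their gluing maps are composites, so the clutching elements add by Lemma \ref{L: Milnor linearity}. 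Once the table of clutching numbers is established, the final two sentences of the theorem follow immediately: by Lemma \ref{lem:homeoclutching} the total spaces are homeomorphic (equivalently diffeomorphic) precisely when the clutching numbers agree up to sign, and by the James--Whitehead result quoted above they are homotopy equivalent precisely when the numbers agree modulo $24$ and sign.

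The main obstacle I anticipate is the bookkeeping in the twisted cases $\mathbf{TA}+-$ and $\mathbf{TD}+-$: there the gluing map involves $\rho:\SU(2)\to \SO(3)\subset\SO(5)$, so the underlying rank-$5$ bundle is not simply a rank-$4$ bundle plus a trivial line, and one must track the factor of $2$ coming from $\rho$ being the adjoint (double-cover) representation, as well as correctly identify which $H^4(S^4)$-generator the relevant equivariant weight restricts to. A clean way to handle this is to use Lemma \ref{L: identification with k-tautological bundles} together with the additivity from Lemma \ref{L: Milnor linearity}, reducing every case to the computation of $p_1$ of a pullback of the tautological bundle $H\to S^4$ (for which $p_1(H)=-2\iota$) along a map of known degree; the degree in each case is extracted from the exponents $n,m$ appearing in the relevant $A_{n,m}$, which is exactly where the quadratic expressions in $a,b,c,d$ originate. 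Getting these degrees right — in particular the signs and the precise combination such as $a(b-d)+c(b+d)$ versus its variants — is the delicate part, but it is a finite check, one gluing map at a time.
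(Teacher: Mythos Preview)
Your overall strategy matches the paper's: compute $p_1^T(M)$ at the fixed points via Proposition \ref{P: Computation PC}, pass to ordinary cohomology, and read off the clutching number from Corollary \ref{C: clutching function pontryagin}. However, some of your intermediate description is garbled. The formula ``$\sum(\text{horizontal weight})^2 - \sum(\text{vertical weight})^2$'' is not what one computes; the paper simply evaluates $p_1^T(M)=\sum_\alpha \alpha^2$ (over \emph{all} four weights) at each of the four fixed points, notes that the value is the same at the two fixed points in each fiber, and then subtracts the constant class equal to the left-fiber value. The point is that the $x^2$ and $y^2$ coefficients always cancel in this difference, leaving a multiple of $xy$ on the right fiber. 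One then uses that the element of $H^*_T((S^4)^T)$ which is $0$ on one base fixed point and $xy$ on the other lies in the image of $H^*_T(S^4)$ and restricts to a generator of $H^4(S^4)$; this is the mechanism for extracting the ordinary $p_1$, not any horizontal-versus-vertical splitting. Your sketch ``$a^2+c^2-(c^2+b^2)-\ldots$'' for the $\mathbf{PD}++$ case does not reflect the actual cancellation: it is the $xy$-coefficients $2(ab+cd)$ versus $2(bc+ad)$ whose difference gives $-2(a-c)(b-d)$.

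Your suggestion to handle the composite cases $\mathbf{PA}--$ and $\mathbf{TD}+-$ via additivity of clutching elements is a legitimate alternative that the paper does not use (it just repeats the direct fixed-point computation). It works because pointwise multiplication in $\SO(5)$ and loop concatenation induce the same operation on $\pi_3$, so Lemma \ref{L: Milnor linearity} applies. But be aware that this requires tracking signs \emph{consistently} across the summands, not merely ``up to sign''; otherwise the sum $(a-c)(b-d)\pm(a+c)(b+d)$ is ambiguous between $2(ab+cd)$ and $2(ad+bc)$. Finally, your anticipated obstacle about a factor of $2$ from $\rho$ in the twisted cases is a non-issue in the GKM approach: once the graph labels are known, the fixed-point computation proceeds identically in all five cases, with no reference to how the clutching map was built.
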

\begin{proof}
By Corollary \ref{C: clutching function pontryagin} we only need to compute the first Pontryagin class in each case and argue that it is twice the number given in the theorem times the image of a generator coming from the base $S^4$. To this end, we calculate the first equivariant Pontryagin class of each example as described in Proposition \ref{P: Computation PC}.

In the following, by the left hand fiber (referring to the graphs as drawn in Section \ref{sec:construction}) we mean the $T$-invariant fiber with edge labels $(a,b)$, $(c,d)$. By the right hand fiber we mean the other $T$-invariant fiber.  In each of the cases, in the fixed points belonging to the left hand fiber, the first equivariant Pontryagin class restricts to
\[
x^2+y^2+(ax+by)^2+(cx+dy)^2 = (a^2+c^2+1)x^2+(b^2+d^2+1)y^2+2(ab+cd)xy.
\]
We carry out the computation for $M^{{\mathbf{PD}++}}_{a,b,c,d}=: M$. In the right hand fiber, the value at both fixed points is 
\[
x^2+y^2+(cx+by)^2+(ax+dy)^2 = (a^2+c^2+1)x^2+(b^2+d^2+1)y^2+2(bc+ad)xy.
\]
Modulo $R^4\cdot 1$ this is equivalent to the element which is $0$ in the left hand fiber and $-2(a-c)(b-d)xy$ in the right hand fiber (by subtracting the constant class with the value of the left hand fiber). Consider the commutative diagram

\[\xymatrix{
H_T^*(M^T)  & \ar[l]\ar[r] H_T^*(M) & H^*(M)\\
H_T^*((S^4)^T)\ar[u]  & \ar[l]\ar[r] \ar[u] H_T^*(S^4) & H^*(S^4)\ar[u]
}\]
for which we have just computed the image of $p_1^T(M)$ under the upper left map. The element in $H^4((S^4)^T)$ which is $0$ on the fixed point under the left hand fiber and $xy$ in the other one comes from $H_T^4(S^4)$ and restricts to a generator of $H^4(S^4)$. Its image $\alpha$ in $H_T^*(M^T)$ is the element which is zero in the two fixed points in the left hand fiber, and $xy$ in the right hand fiber. As argued above, modulo $R^4\cdot 1$, the previously computed image of $p_1^T(M)$ in $H_T(M^T)$ agrees with $-2(a-c)(b-d)\alpha$. It follows that the image $p_1(M)\in H^4(M)$ of $p_1^T(M)$ is $-2(a-c)(b-d)$ times the image of a generator coming from $H^4(S^4)$. This yields the second value in the table.

The argument in all other cases is identical; we only list the values of the first equivariant Pontryagin class at the fixed points in the right hand fiber. The respective values in the table then arise via substracting the previous value in the left hand fiber and dividing by $2$. They are:

For $M^{{\mathbf{PD}--}}_{a,b,c,d}$:
\[
x^2+y^2+(-cx+by)^2+(-ax+dy)^2 = (a^2+c^2+1)x^2+(b^2+d^2+1)y^2-2(bc+ad)xy.
\]
For $M^{{\mathbf{PA}--}}_{a,b,c,d}$:
\[
x^2+y^2+(-ax+by)^2+(-cx+dy)^2 = (a^2+c^2+1)x^2+(b^2+d^2+1)y^2-2(ab+cd)xy.
\]
For $M^{{\mathbf{TA}+-}}_{a,b,c,d}$:
\[
x^2+y^2+(ax+by)^2+(-cx+dy)^2 = (a^2+c^2+1)x^2+(b^2+d^2+1)y^2+2(ab-cd)xy.
\]
Finally, for $M^{{\mathbf{TD}+-}}_{a,b,c,d}$:
\[
x^2+y^2+(ax+dy)^2+(-cx+by)^2 = (a^2+c^2+1)x^2 + (b^2+d^2+1)y^2 + 2(ad-bc)xy.
\]
\end{proof}

\begin{proof}[Proof of Theorem \ref{thm:examples}]
Recall for any choice of $a,b,c,d$ subject to the restrictions in Remark \ref{rem: labels explicit} we obtain the respective GKM manifolds from Theorem \ref{thm: clutching numbers}. In particular, we find many examples of
pairs of homotopy equivalent, not homeomorphic GKM manifolds, e.g., $M^{{\mathbf{PD}}++}_{1,-1,2,23}$ is homotopy equivalent but not homeomorphic to $S^4\times S^4$.
\end{proof}

\begin{rem} \label{rem:blowup} Our class of examples also contains many pairs of GKM actions on the same smooth manifold whose GKM graphs do not agree as unlabled graphs. For instance, $M^{{\mathbf{TD}}+-}_{2,1,1,3}$ is nonequivariantly diffeomorphic to $S^4\times S^4$ while its GKM fibration is of twisted type. 

Such examples were known before, even in the symplectic toric setting. For instance, consider the product $\CC P^2\times \CC P^1$ with the standard toric $T^3$-action. The toric symplectic manifold one obtains by two equivariant blow-ups in fixed points depends on the choice of the two fixed points (also its GKM graph depends on it), but topologically it is always diffeomorphic to $(\CC P^2\times \CC P^1)\# \CC P^3 \# \CC P^3$. 
\end{rem}

\begin{rem}As shown in Corollary \ref{T: cohomology rings of S_l}, the cohomology ring of the total space $M_\varphi$, for $\varphi\in \pi_3({\mathrm{SO}}(5))\cong \ZZ$, contains only very little information; it is isomorphic to either $\ZZ[x,y]/(x^2,y^2)$ or $\ZZ[x,y]/(x^2-x\smile y,y^2)$, depending on whether the clutching number $\varphi$ is even or odd. In particular, among our examples there are many pairs of GKM manifolds with isomorphic cohomology rings which are not homotopy equivalent. Such examples were known before; in \cite{GKMnonrigid} we even found examples of GKM manifolds that are not homotopy equivalent but whose GKM graphs coincide.
\end{rem}

\section{Extensions}
\label{sec:extensions}

In this section we study for which of the $T^2$-equivariant bundles $S^4\rightarrow E\rightarrow S^4$ studied in the previous sections we can extend the torus action to a higher dimension. I.e.\ we assume $T^2$ acts in standard fashion on the base $S^4$, the action on $E$ is GKM and there is a torus $T\supset T^2$ and an extension of the actions on $E$ and $S^4$ such that the bundle is $T$-equivariant. Given such an extension one may choose a connection as in Lemma \ref{lem: goodcon} for the GKM graph of the $T$-action on $E$. We note that the same connection is then compatible with the restricted $T^2$-action and signs in the congruences resulting from sign choices are preserved as well. Hence, analogous to the discussion below Lemma \ref{lem: goodcon} the extension with its chosen connection can be associated to exactly one of the cases ${\mathbf{PA}}++$, ${\mathbf{PD}}++$, ${\mathbf{PA}}--$, ${\mathbf{TA}}+-$, or ${\mathbf{TD}}+-$.

\begin{thm}\label{thm: extensions}
Let $E\rightarrow S^4$ be a $T$-equivariant bundle as above and assume $T$ acts effectively on $E$. Then its dimension is bounded by
\begin{center}
\begin{tabular}{c|c}
	case & $\dim T\leq$\\ \hline
	$\mathbf{PA}++$ &  $4$\\
	$\mathbf{PD}++$ &  $3$\\
	$\mathbf{PA}--$ &  $2$\\
	$\mathbf{TA}+-$  & $3$\\
	$\mathbf{TD}+-$  & $2$
	\end{tabular}
	\end{center}
Furthermore the action on the equivariant bundles $M_{a,b,c,d}^*\rightarrow S^4$ (for $*$ one of the above cases) extends to the respective maximal dimension.
\end{thm}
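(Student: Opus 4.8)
The plan is to treat the two halves of Theorem~\ref{thm: extensions} separately: an upper bound on $\dim T$, proven purely combinatorially from the GKM graph, and a matching construction exhibiting an effective $T^k$-action for the maximal $k$ in each family.

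For the upper bounds, I would argue on the level of GKM graphs. Fix an extension to an effective $T$-action with a connection as in Lemma~\ref{lem: goodcon}; by the discussion opening this section it falls into one of the five cases, and the labels of all edges lie in $\ZZ^{\dim T}/\pm 1$, subject to the congruence relations dictated by that case. Effectivity means the labels at any vertex span $\ZZ^{\dim T}$. At a fixed point in the left-hand fiber the adjacent weights are $\alpha_1=(1,0,\ldots)$, $\alpha_2=(0,1,0,\ldots)$ (the two horizontal base directions) together with two vertical weights $v_1,v_2$; so $\dim T\le 4$ always, with equality only if $v_1,v_2$ together with $\alpha_1,\alpha_2$ are a basis. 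The point is then that the congruence relations in the more twisted cases force additional linear dependence among the weights and hence lower the bound. Concretely: in case $\mathbf{PD}$ the connection along the $(0,1)$-edges swaps the two vertical edges, and compatibility of the connection forces $v_2\equiv \pm v_1 \bmod \alpha_1,\alpha_2$; this kills one dimension, giving $\dim T\le 3$. In case $\mathbf{PA}{--}$ (and $\mathbf{TD}{+-}$) one gets the analogous congruence along \emph{both} horizontal directions with opposite signs, forcing $v_1,v_2$ to be congruent to each other modulo only \emph{one} of the $\alpha_i$, which pins the vertical weights down to a $2$-dimensional sublattice spanned together with the base directions, hence $\dim T\le 2$. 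I would carry this out case by case, writing the vertical weights generically in $\ZZ^k$, imposing the case-specific congruences from Remark~\ref{rem: labels explicit} (now read in $\ZZ^k$ rather than $\ZZ^2$), and observing that the span drops to the claimed dimension; the pairwise linear independence condition then has to be checked to still be satisfiable, so that the bound is not vacuous.

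For the realizations, the strategy is to reuse the clutching constructions of Section~\ref{sec:construction} verbatim, but with the homomorphisms $\varphi,\psi\colon T^2\to \U(2)\subset\SO(5)$ replaced by homomorphisms from the larger torus $T^k$. The base $S^4\subset\CC^2$ carries the standard $T^2$-action which, for the extension, I would keep as the $T^2$-quotient of a $T^k$-action only in the $\mathbf{PA}{++}$ case, where $E=S^4\times S^4$ and the second $S^4\subset\RR^5$ admits an extra circle (two diagonal circles on $\CC^2\subset\RR^5$), so the full product torus is $T^2\times T^2=T^4$; one checks directly that this is GKM, effective, and restricts to the stated $T^2$-action. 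In the cases $\mathbf{PD}{++}$ and $\mathbf{TA}{+-}$, one extra circle is available: the equivariance lemmas (Lemma~\ref{lem:equiv1}, Lemma~\ref{lem:equiv4}) were proved by commuting the clutching map $A_{n,m}$ past diagonal matrices, and the same computations go through with $s,t$ replaced by monomials in a third parameter, giving a $T^3$-action; the point is that the gluing map only ever ``sees'' the single combination $s\bar t$ (cf.\ Lemma~\ref{lem:equivH}), so one has a free extra torus direction acting by scalars on a trivial summand. In the cases $\mathbf{PA}{--}$ and $\mathbf{TD}{+-}$ the gluing maps are compositions that entangle both factors, and no extra circle survives, matching the bound $\dim T\le 2$; here there is nothing to construct beyond the existing $T^2$-action.

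The main obstacle I expect is the upper-bound half, specifically making the passage from ``the congruence relations of case $X$'' to ``the weight lattice at a vertex has rank $\le k_X$'' fully rigorous rather than suggestive. One has to be careful that the congruences in Remark~\ref{rem: labels explicit} were derived under a normalization (choice of signs, swapping $(e,f)\leftrightarrow(g,h)$) that used $k=2$; re-deriving them for general $k$ means re-examining which normalizations remain available, and in the $\pm$-mixed cases tracking the signs $\varepsilon$ through the orientability bookkeeping of Definition~\ref{defn:orientable}. A clean way to organize this is to observe that the connection along a horizontal edge is, up to the congruence, an automorphism of the lattice preserving the base directions, and the five cases correspond to five conjugacy classes of such data on $\ZZ^2$ (identity, a transposition, a sign-change, etc.); the rank bound is then the dimension of the subspace on which the relevant relations are non-degenerate, computed once and for all. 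The realization half, by contrast, should be essentially routine once the right $T^k\to\U(2)$ homomorphisms are written down, since all the hard equivariance computations have already been done in Section~\ref{sec:construction} and only need the substitution of parameters, plus a check that the resulting actions are effective and have the correct equivariant one-skeleton (which follows because adding torus directions can only refine the isotropy data, and GKM-ness is preserved as the fixed-point set is unchanged).
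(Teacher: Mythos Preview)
Your overall strategy matches the paper's: derive the upper bounds from the congruence relations constraining the rank of the weight lattice at a vertex, and exhibit the extensions by finding extra commuting circles in the fiber. Two points of comparison are worth noting.

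For the upper bounds, your sketch of the constraints in cases $\mathbf{PA}{--}$ and $\mathbf{TD}{+-}$ is not quite right: the relations do not force $v_1$ and $v_2$ to be congruent to one another, but rather force each $v_i$ separately (or their combination) to satisfy a relation of the form $w=-w$ modulo the base lattice. The paper normalizes the base weights to $(1,0,0,0)$, $(0,1,0,0)$ and writes the vertical weights as $(a,b,k,l)$, $(c,d,m,n)$; the $(1,0,0,0)$-transport already forces the right-fiber weights to be $(*,b,k,l)$, $(*,d,m,n)$, and then the $(0,1,0,0)$-transport yields, in case $\mathbf{PA}{--}$, the relations $(k,l)=-(k,l)$ and $(m,n)=-(m,n)$, and in case $\mathbf{TD}{+-}$ the relations $(k,l)=(m,n)$ and $(m,n)=-(k,l)$. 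The conclusion (span of dimension $\le 2$) is the same as yours, but the mechanism you describe would not produce it as stated.

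For the realizations, the paper takes a shorter route than re-running the equivariance lemmas with an extra parameter. The observation is simply that in case $\mathbf{PD}{++}$ the clutching map lands entirely in the left $\SU(2)$-factor of $\SO(4)\cong\SU(2)\times\SU(2)/\ZZ_2$, so the maximal circle of the right factor commutes with it and furnishes a third circle acting on every fiber; in case $\mathbf{TA}{+-}$ the clutching lands in the lower-right $\SO(3)\subset\SO(5)$, so the upper-left $\SO(2)$ plays the same role. Effectivity follows because the extra circle acts trivially on the base but effectively on each fiber, while the original $T^2$ is already effective on the base. This bypasses any repetition of the computations from Section~\ref{sec:construction}.
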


\begin{rem}
Recall from Remark \ref{rem: overlap} that some of the bundles discussed can be associated to two distinct cases by different choices of connection. In this case it can happen that a connection is compatible with the $T^2$-action but not compatible with the extended $T$-action. Hence the restriction might be associated to a case (via a choice of connection) to which the extension does not belong. Thus, when talking about maximal extensions of the $T^2$-equivariant bundles without fixing the connection, then only the higher upper bound among the possibly two associated cases applies. As an example, the product bundle $S^4\times S^4$ with fiber weights $(a,b),(a,d)$ admits a connection of type $\mathbf{PA}++$ and one of type $\mathbf{PD}++$ while obviously admitting an extension to a $T^4$-action.
\end{rem}

\begin{proof}
We will discuss extensions to $T^4$-actions, however allowing non-effective extensions and hence covering all cases. The inclusion of $T^2$ into $T^4$ induces a projection $\ZZ^4\rightarrow \ZZ^2$ on the weight lattices, mapping the base weights onto $(1,0),(0,1)$. The resulting short exact sequence splits. Therefore we may pull back the $T^4$-action by an automorphism of $T^4$ such that the base weights are given by $(1,0,0,0)$ and $(0,1,0,0)$.

Using the congruence relation of the horizontal $(1,0,0,0)$-edges we may assume that the GKM graph of the extension is of the form

\begin{center}
		\begin{tikzpicture}

			\node (a) at (0,0)[circle,fill,inner sep=2pt] {};
			\node (b) at (6,0)[circle,fill,inner sep=2pt]{};
			\node (c) at (0,4)[circle,fill,inner sep=2pt]{};	
			\node (d) at (6,4)[circle,fill,inner sep=2pt]{};			
			
			\node at (-1.6,2) {$(a,b,k,l)$};
			\node at (+1.6,2) {$(c,d,m,n)$};
			\node at (7.6,2) {$(*,d,m,n)$};
			\node at (4.3,2) {$(*,b,k,l)$};
		
			\node at (2,4.3) {$(1,0,0,0)$};
			\node at (2,3.7) {$(0,1,0,0)$};
			
			\draw [very thick, dashed](a) --++(1,-0.3);
			\draw [very thick, dashed](a) --++(1,+0.3);
			
			\draw [very thick, dashed](c) --++(1,-0.3);
			\draw [very thick, dashed](c) --++(1,+0.3);
			
			\draw [very thick, dashed](b) --++(-1,-0.3);
			\draw [very thick, dashed](b) --++(-1,+0.3);
			
			\draw [very thick, dashed](d) --++(-1,-0.3);
			\draw [very thick, dashed](d) --++(-1,+0.3);

			\draw [very thick](a) to[in=250, out=110] (c);
			\draw [very thick](a) to[in=290, out=70] (c);
			\draw [very thick](b) to[in=250, out=110] (d);
			\draw [very thick](b) to[in=290, out=70] (d);
			
		\end{tikzpicture}
	\end{center}
	where the graph is of twisted or product type and the value of $*$ is as discussed in Remark \ref{rem: labels explicit} (however both of the last data are irrelevant for the proof). Transport along the upper horizontal edge with label $(0,1,0,0)$ preserves left and right fiber edges in case \textbf{A} and swaps the edges in case \textbf{D} while introducing a sign to the transported weights $\mathrm{mod}\, (0,1,0,0)$ according to the signs in the case distinction. E.g.\ in the case \textbf{D}$+-$ we obtain
	\[(a,b,k,l)\equiv +(*,d,m,n)\mod (0,1,0,0),\qquad (c,d,m,n)\equiv - (*,b,k,l)\mod (0,1,0,0)\]
	 Thus transport along the $(0,1,0,0)$ edges forces the relations
	\begin{center}
	\begin{tabular}{c|c}
	case & relations \\ \hline
	\textbf{PA}$++$ & \\
	\textbf{PD}$++$ & $(k,l)=(m,n)$ \\
	\textbf{PA}$--$ & $(k,l)=-(k,l)$, $(m,n)=-(m,n)$ \\
	\textbf{TA}$+-$ & $(m,n)=-(m,n)$\\
	\textbf{TD}$+-$ & $(k,l)=(m,n)$, $(m,n)=-(k,l)$
	\end{tabular}
	\end{center}
	Under these relations the maximal possible dimension of the span of the weights at a fixed point is the dimension bound claimed in the statement of the theorem. Indeed the former leads to the latter since the dimension of the span of the weights (at one vertex) agrees with the maximal dimension of an effective action of a subtorus. 
	
It remains to argue that such an extension indeed exists on the $M_{a,b,c,d}^*$ in the respective cases.	
The case \textbf{PA}$++$ is just the product case which evidently has an extension to a $T^4$-action. In case \textbf{PD}$++$ (see Section \ref{subsec:PD++}) only the left hand factor of $\SO(4)\cong \SU(2)\times\SU(2)/\ZZ_2$ is used to act on the fiber $S^4$ during the clutching construction. Hence the maximal circle of the right hand factor acts on all fibers. This commutes with the given $T^2$-action and hence extends this to a $T^3$-action. It is indeed effective since the additional circle acts trivially on the base and effectively on every fiber, while the original $T^2$ acted effectively on the base. In the case \textbf{TA}$--$ the clutching happens via transformations in $\SO(3)\subset \SO(5)$ embedded in the lower right corner. Hence in this case the upper left $\SO(2)$ provides an additional circle acting on all fibers and extending the previous $T^2$-action. The extension is effective for the same reasoning as above.
\end{proof}
\begin{rem} A GKM action is called GKM$_k$ if any set of $k$ adjacent weights is linearly independent. Choosing $k,l,m,n$ such that the maximal dimension is reached we observe that this maximal extension is GKM$_4$ in case \textbf{PA}$++$, GKM$_3$ in case \textbf{PD}$++$, and GKM$_2$ in the remaining cases.
\end{rem}
\begin{rem} The general question whether the GKM graph of a given GKM action admits an effective extension of the labeling to a larger torus was addressed in \cite{KurokiUpperBounds}, for signed GKM graphs via the so-called group of axial functions. In our situation the graph is simple enough for an ad hoc solution of this problem.
\end{rem}

\bibliography{GKMPetrie}
\bibliographystyle{plain}
\end{document}